\def\R {\mathbb{R}}
\def\C {\mathcal{C}}
\def\N {\mathbb{N}}
\def\S {\mathbb{S}}
\def\bv {\mathbf{v}}
\def\bu {\mathbf{u}}
\def\bw {\mathbf{w}}
\def\eps{\varepsilon}
\def\dist{{\rm dist}}
\newcommand{\loc}{\mathrm{loc}}
\newcommand{\alphaopt}{\alpha_\mathrm{opt}}
\renewcommand{\div}{\mathrm{div}}
\newcommand{\problem}[1] {(P)_{#1}}
\newcommand{\de}[1] {\mathrm{d} #1}
\DeclareMathOperator*{\pv}{pv}
\DeclareMathOperator*{\tsum}{\textstyle{\sum}}
\DeclareMathOperator{\supp}{supp}
\newtheorem{proposition}{Proposition}[section]
\newtheorem{theorem}[proposition]{Theorem}
\newtheorem{corollary}[proposition]{Corollary}
\newtheorem{lemma}[proposition]{Lemma}
\theoremstyle{definition}
\newtheorem{definition}[proposition]{Definition}
\newtheorem{remark}[proposition]{Remark}
\numberwithin{equation}{section}
\title[Lotka-Volterra competition vs fractional diffusion]{Strong competition versus fractional diffusion:\\
the case of Lotka-Volterra interaction}
\author{Gianmaria Verzini }
\email{gianmaria.verzini@polimi.it}
\address{Dipartimento di Matematica, Politecnico di Milano, p.za Leonardo da
Vinci 32,  20133 Milano, Italy}
\author{Alessandro Zilio}
\email{alessandro.zilio@polimi.it}
\address{Dipartimento di Matematica, Politecnico di Milano, p.za Leonardo da
Vinci 32,  20133 Milano, Italy}
\thanks{Work partially supported by the PRIN2009 grant ``Critical Point Theory and
Perturbative Methods for Nonlinear Differential Equations''.}
\subjclass[2010]{Primary: 35J65; secondary: 35B40 35R11 92D25.}
\keywords{Spatial segregation, monotonicity formulae, blow-up analysis, optimal regularity of limiting profiles, singular perturbations}
\begin{document}

\begin{abstract}
We consider a system of differential equations with nonlinear Steklov boundary conditions, related to the fractional problem
\[
(-\Delta)^s \bu = \mathbf{f}(x,\bu) - \beta u_i^p \sum_{j\neq i} a_{ij} u_j^p,
\]
where $\bu=(u_1,\dots,u_k)$, $s\in(0,1)$, $p>0$, $a_{ij}>0$ and $\beta>0$. When $k=2$
we develop a quasi-optimal regularity theory in $\C^{0,\alpha}$, uniformly w.r.t. $\beta$,
for every $\alpha < \alphaopt=\min(1,2s)$; moreover we show that the traces of the limiting
profiles as $\beta\to+\infty$ are Lipschitz continuous and segregated.
Such results are extended to the case of $k\geq3$ densities,
with some restrictions on $s$, $p$ and $a_{ij}$.
\end{abstract}

\maketitle

\section{Introduction}

%
%
%
%

Let us consider the following stationary differential system,
involving $k\geq2$ non negative densities $u_i$ which are subject to diffusion, reaction and
competition
\begin{equation}\label{eqn: intro_gen}
(-\Delta)^s u_i = f_{i}(x,u_i) - \beta u_i^p \sum_{j\neq i} a_{ij} u_j^q,
\end{equation}
settled in $H^{s}(\R^N)$, $N\geq1$, or in a bounded domain with suitable boundary conditions. In this system,
different ranges of the parameter $s$ allow to model the brownian diffusion ($s=1$), as well as the fractional one
($0<s<1$), which arises whenever the underlying Gaussian process is replaced by
the Levy one, in order to allow discontinuous random walks. In the latter case, the nonlocal operator
\[
(-\Delta)^s u (x) = c_{N,s} \pv\int_{\R^N} \frac{u(x)-u(\xi)}{|x-\xi|^{N+2s}}\,\de\xi
\]
denotes the $s$-power of the laplacian. Furthermore, the competitive nature of the
interaction is driven by the positivity of the parameters $\beta$, $p$, $q$ and $a_{ij}$, $1\leq i,j\leq k$.
Among others, two types of competition are particularly relevant in the applications:
\begin{itemize}
 \item the case $p=q=1$, that is the \emph{Lotka-Volterra} type competition, which is widely used in population dynamics and ecology;
 \item the case $p=1$, $q=2$ (and $a_{ij}=a_{ji}$), which turns \eqref{eqn: intro_gen} into the \emph{Gross-Pitaevskii} system: this system arises in the search of solitary waves associated to the cubic Schr\"odinger system, which is commonly accepted as a model for Bose-Einstein condensation in multiple states, and often used also in nonlinear optics. In great contrast with the Lotka-Volterra one, this system has a \emph{variational} structure.
\end{itemize}
In the study of \eqref{eqn: intro_gen}, a peculiar issue is the analysis of the behavior of the densities in the
case of \emph{strong competition}, i.e. when $\beta\to+\infty$. In such situation, one expects the formation of self-organized patterns, in which the limiting densities are spatially segregated, and the natural questions regard
\emph{a}) the common regularity shared by families of solutions, uniformly in $\beta$ and \emph{b}) the properties of the limiting segregated profile. In facing such questions, typical tools are the blow-up technique and the monotonicity formulae of Alt-Caffarelli-Friedman and of Almgren type.

After \cite{clll,ctvNehari,ctvOptimal}, the case $s=1$ of standard diffusion has been extensively
studied in the last decade. In particular it is known that, both in the case of Lotka-Volterra
competition \cite{ctv,ckl} and in the variational one \cite{caflin,nttv},
each family of solutions which share a common uniform bounds in the $L^{\infty}$ norm is precompact
in the topology of $H^1 \cap \C^{0,\alpha}$ for every $\alpha < 1$; we highlight that this result is quasi-optimal, in the sense that $\alpha=1$ is the maximal common regularity allowed for this problem.
Furthermore, the limiting profiles (as $\beta\to+\infty$) are solutions of the segregated system
\begin{equation}\label{eqn: seg sys lap}
    u_i \left(- \Delta u_i - f_i(x,u_i) \right) = 0, \qquad u_i u_j = 0\text{ for }j\neq i,
\end{equation}
they are Lipschitz continuous, and they obey to a weak reflection law which roughly says that,
on the free boundary separating two components, the corresponding gradients are equal in magnitude (up to suitable scaling factors depending on the matrix $(a_{ij})$) and opposite in direction \cite{tt}. Remarkably, such law is the same for both types of competition \cite{dwz3}. For some related results, in the case of standard diffusion, we
also refer to \cite{dwz1,nttv2} and references therein.

Coming to the anomalous diffusion case $s\in(0,1)$, for the moment only the competition of
Gross-Pitaevskii type has been considered in the literature. In such framework, the results above
were recently generalized \cite{tvz1,tvz2,z_tesi} in the following sense: $L^\infty$ uniform bounds
imply uniform bounds in $H^s \cap \C^{0,\alpha}$ (for a suitable extension problem), for every
$\alpha<\alphaopt^{\text{GP}}(s)$. Here the optimal exponent
\[
\alphaopt^{\mathrm{GP}}(s)=s,
\]
at least when $0<s\leq 1/2$; for $1/2<s< 1$ we could only show that $\alphaopt^{\text{GP}}(s)
\geq 2s-1$, because of the lack of a clean-up lemma appropriate to exclude self-segregation; see
\cite{tvz2} for further details. In any case, this result agrees with the one holding for the
standard Laplace operator, since $\alphaopt^{\text{GP}}(1)=1$. Moreover the limiting
profiles satisfy a natural extension to the fractional setting of the system
\eqref{eqn: seg sys lap}, that is
\begin{equation}\label{eqn: seg sys flap_gs}
    u_i \left( (- \Delta )^s u_i - f_i(x,u_i) \right) = 0, \qquad u_i u_j = 0\text{ for }j\neq i,
\end{equation}
and the validity of an Almgren monotonicity formula across the free boundary ensures a
reflection property, as in the case $s=1$.

Under the perspective just described, in this paper we address the study of system
\eqref{eqn: intro_gen} in the case $s\in(0,1)$ and
\[
p=q>0.
\]
We remark that such range of parameters not only includes the Lotka-Volterra competition,
but it is of interest also in the complementary case $p\neq 1$. Indeed, in the case of $k=2$
components, such competition appears in the modeling of diffusion flames \cite{crs}, while in the
general case the change of variables $U_i=u_i^p$ turns system \eqref{eqn: intro_gen} into the
one for competing densities subject to fast fractional diffusion (when $p>1$), or to fractional
diffusion in a porous medium (when $p<1$) \cite{dqrv,bv}.

As in \cite{tvz1,tvz2}, we state our results for a localized extension problem \cite{cs}
related to the nonlocal system \eqref{eqn: intro_gen}, namely the problem
\[
    \begin{cases}
    L_a  v_{i} = 0 & \text{in } B^+_1\\
    \partial^a_{\nu} v_{i} = f_{i,\beta}(x, v_{1}, \dots, v_{k}) - \beta v_{i}^p \tsum_{j \neq i} a_{ij} v_{j}^p & \text{on } \partial^0 B^+_1,
    \end{cases} \leqno \problem{\beta}
\]
where we adopt the standard notations $\R^{N+1}_+:=\{X=(x,y)\in\R^N\times\R:y>0\}$,
$B^+_r:=B_r\cap\{y>0\}\subset\R^{N+1}_+$, $\partial^+ B^+_r := \partial B_r\cap\{y>0\}$,
$\partial^0 B^+_r:= B_r\cap\{y=0\}\subset\R^{N}$, and
\[
    L_a v := -\div\left(|y|^a \nabla v\right), \qquad
    \partial^a_{\nu} v := \lim_{y \to 0^+} - y^a \partial_{y} v,
\]
for $a:=1-2s\in(-1,1)$. This last condition insures that the weight $y^a$ belongs to the
Muckenhoupt $A_2$-class \cite{kufner}, so that a weak version of $\problem{\beta}$ can
be formulated in the Hilbert space
\[
    H^{1;a}(B^+_1) := \left\{v : \int_{B^+_1} y^a \left(|v|^2 + |\nabla v|^2 \right)\, \de{x}\de{y} < \infty \right\}.
\]
Our first main results concern the full quasi-optimal theory in the case of two densities.
\begin{theorem}\label{thm: intro_hold2}
Let $p>0$, $a_{ij}>0$ for any $j\neq i$, and the reaction terms $f_{i,\beta}$ be continuous and map
bounded sets into bounded sets, uniformly w.r.t. $\beta>0$.

If $k=2$ then, for every
\[
\alpha<\alphaopt(s)=\alphaopt^{\mathrm{LV}}(s):=\min(2s,1)
\]
and $\bar m>0$, there exists a constant $C=C(\alpha,\bar m)$ independent of $\beta$ such that
\[
 \| \bv_{\beta} \|_{L^{\infty}(B^+)} \leq \bar m
 \quad\implies\quad
 \| \bv_\beta\|_{\C^{0,\alpha}\left(\overline{B^+_{1/2}}\right)} \leq C,
\]
for every $\bv_\beta=(v_{1,\beta},v_{2,\beta})$ nonnegative solution of problem $\problem{\beta}$.

Furthermore, any sequence of uniformly bounded, nonnegative solutions
$\{(v_{1,\beta_n},v_{2,\beta_n})\}_n$, with $\beta_n\to\infty$,
converges (up to subsequences) in $\left(H^{1;a}\cap \C^{0,\alpha}\right)\left(\overline{B^+_{1/2}}\right)$
to a limiting profile $(v_1,v_2)$.
\end{theorem}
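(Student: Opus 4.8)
\emph{Plan of proof.} The plan is to combine two ingredients: a \emph{scalar reduction} available only for $k=2$ --- which is also the reason why the Lotka--Volterra threshold $\alphaopt=\min(2s,1)$ exceeds the Gross--Pitaevskii exponent $s$ --- together with a \emph{blow-up and Liouville} scheme analogous to the one used for the Gross--Pitaevskii system in \cite{tvz1,tvz2}. As a preliminary I would test $\problem{\beta}$ against $v_{i,\beta}\eta^2$ for a cutoff $\eta\in C^\infty_c(B_{3/4})$ and use $\|\bv_\beta\|_{L^\infty}\le\bar m$ together with the uniform bounds on $f_{i,\beta}$ to obtain the $\beta$-uniform estimates
\[
\int_{B^+_{3/4}}y^a|\nabla v_{i,\beta}|^2\,\de x\,\de y\le C
\qquad\text{and}\qquad
\beta\int_{\partial^0 B^+_{3/4}}(v_{1,\beta}v_{2,\beta})^p\,(v_{1,\beta}+v_{2,\beta})\,\de x\le C;
\]
the first will give weak $H^{1;a}$ compactness, the second the segregation of the traces in the limit. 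The core of the statement is then the uniform $\C^{0,\alpha}$ bound.

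\emph{Step 1: the scalar reduction.} For two densities the competition term $\beta(v_{1,\beta}v_{2,\beta})^p$ is symmetric in the two indices, so it cancels in the linear combination $w_\beta:=a_{21}v_{1,\beta}-a_{12}v_{2,\beta}$, which therefore solves the \emph{$\beta$-independent} scalar Steklov-type problem
\[
L_a w_\beta=0\ \text{ in }B^+_1,\qquad \partial^a_\nu w_\beta=a_{21}f_{1,\beta}-a_{12}f_{2,\beta}\in L^\infty(\partial^0 B^+_1),
\]
with datum bounded uniformly in $\beta$. I would then invoke the quasi-optimal boundary regularity for a single weighted equation with bounded Steklov datum to conclude that $\|w_\beta\|_{\C^{0,\alpha}(\overline{B^+_{1/2}})}\le C(\alpha)$, uniformly in $\beta$, for every $\alpha<\min(2s,1)=\alphaopt$.

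\emph{Step 2: blow-up for the single densities.} I would argue by contradiction: suppose the $\C^{0,\alpha}$ estimate fails for some $\alpha<\alphaopt$, so that there are solutions $\bv_n:=\bv_{\beta_n}$ with $\|\bv_n\|_{L^\infty}\le\bar m$ but $L_n:=\max_i[v_{i,n}]_{\C^{0,\alpha}}\to+\infty$ on a ball slightly smaller than $B^+_{1/2}$ (interior estimates for $L_a$-harmonic functions confine the blow-up to a neighbourhood of $\partial^0 B^+$). One may assume $\beta_n\to+\infty$, for otherwise $\partial^a_\nu v_{i,n}$ is itself $L^\infty$-bounded and Step~1 applies directly to $v_{i,n}$. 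Pick almost-maximizing points $p_n,q_n$, put $r_n:=|p_n-q_n|\to0$, and set
\[
W_{i,n}(X):=\frac{v_{i,n}(p_n+r_nX)-v_{i,n}(p_n)}{L_nr_n^\alpha}.
\]
The $W_{i,n}$ have locally bounded $\C^{0,\alpha}$ seminorm, vanish at the origin, and one of them (with index eventually constant) has unit oscillation at scale $1$, hence converges to a \emph{nonconstant} profile; moreover, since $w_n$ has $\beta$-uniformly bounded $\C^{0,\alpha}$ norm by Step~1, its rescaling tends to $0$, so in the limit $a_{21}W_1=a_{12}W_2+\text{const}$. Setting $d_n:=\dist(p_n,\{y=0\})/r_n$ and $\lambda_n:=\beta_n r_n^{2s}(L_nr_n^\alpha)^{2p-1}$, I would distinguish, up to subsequences: \emph{(i)} if $d_n\to+\infty$, the limit is $L_a$-harmonic on $\R^{N+1}$ with $|W(X)|\le C|X|^\alpha$, $\alpha<1$, hence constant by a Liouville theorem --- a contradiction; \emph{(ii)} if $d_n$ and $\lambda_n$ are bounded, the competition term disappears and $W$ is $L_a$-harmonic on a half-space with $\partial^a_\nu W=0$, so even reflection in $y$ makes it entire of sublinear growth, hence constant --- a contradiction; \emph{(iii)} if $d_n$ is bounded and $\lambda_n\to+\infty$, boundedness of $\partial^a_\nu W_{i,n}$ forces the rescaled competition density to vanish, so the nonnegative limits $\widetilde W_i$ (after absorbing the bounded rescaled centre values $v_{i,n}(p_n)/(L_nr_n^\alpha)$) are segregated, $\widetilde W_1\widetilde W_2\equiv0$ on $\{y=0\}$, and together with $a_{21}\widetilde W_1-a_{12}\widetilde W_2\equiv\text{const}$ and continuity this forces each $\widetilde W_i$, hence each $W_i$, to be constant --- again a contradiction. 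Thus $L_n$ is bounded and the uniform estimate holds.

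\emph{Step 3: compactness, and the main obstacle.} From the uniform $H^{1;a}$ and $\C^{0,\alpha}$ bounds I would extract, via Ascoli--Arzel\`a and weak $H^{1;a}$ compactness, a subsequence with $v_{i,\beta_n}\to v_i$ in $\C^{0,\alpha}(\overline{B^+_{1/2}})$ and weakly in $H^{1;a}(B^+_{1/2})$; the second preliminary estimate yields $v_1v_2\equiv0$ on $\partial^0 B^+_{1/2}$, and passing to the limit in the weak formulation identifies $(v_1,v_2)$ as a limiting profile, while strong $H^{1;a}$ convergence follows from the now-standard device of testing the equations against $(v_{i,\beta_n}-v_i)\eta^2$ and exploiting that $v_i$ vanishes on the competition region of the other component. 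I expect the main obstacle to lie in Step~2: ensuring that the trichotomy in $d_n$ and $\lambda_n$ is genuinely exhaustive, and --- above all --- the clean-up in case \emph{(iii)}, i.e. proving that a surviving competition term really forces $\widetilde W_1\widetilde W_2\equiv0$ rather than a mere one-sided differential inequality. The quasi-optimal scalar estimate used in Step~1 is the other point demanding care, being essentially sharp boundary regularity for $L_a$ that cannot be pushed up to the endpoint $\alpha=\min(2s,1)$.
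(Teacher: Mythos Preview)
Your overall architecture --- contradiction, blow-up, Liouville classification --- is the paper's as well, and your Step~1 is a genuine shortcut: the paper does \emph{not} invoke scalar regularity for $a_{21}v_{1,\beta}-a_{12}v_{2,\beta}$ before blowing up, but instead derives the relation $a_{21}w_1-a_{12}w_2=\text{const}$ only at the level of the blow-up limit (Lemma~\ref{lem: 3}), from the limiting equations of Proposition~\ref{prp: blow_up}. Your observation that this linear combination already satisfies a $\beta$-free problem is correct and economical.

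That said, Step~2 has two real gaps. First, in case~\emph{(ii)} you claim that when $\lambda_n$ stays bounded ``the competition term disappears''. This is only true if $\lambda_n\to0$; if $\lambda_n\to M>0$ the limit retains a nontrivial coupling $\partial^a_\nu \widetilde W_i=-Ma_{ij}(\widetilde W_i\widetilde W_j)^p$, and one then needs a Liouville result for this semilinear problem (Lemma~\ref{lem: global eigenfunction_s} in the paper) to conclude. Your relation $a_{21}\widetilde W_1-a_{12}\widetilde W_2=\text{const}$ from Step~1 does make this step short once you know that result, but it does not make the case trivial.

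Second --- and this is the main issue --- in both \emph{(ii)} and \emph{(iii)} you tacitly assume the ``rescaled centre values'' $c_{i,n}:=v_{i,n}(p_n)/(L_nr_n^\alpha)$ are bounded, so that the nonnegative profiles $\widetilde W_i=W_i+c_i$ exist. This is precisely the hardest step of the whole argument. Nothing in your scalar reduction controls the individual $c_{i,n}$ (only their weighted difference, and even that only when $L_nr_n^\alpha\not\to0$), and if some $c_{i,n}\to\infty$ the competition term in $\partial^a_\nu W_{i,n}$ cannot be read off in the limit. The paper devotes Lemmas~\ref{lem: trip estimate}--\ref{lem: w(0) bdd_s} to this: one first bounds $M_n\int w_{i,n}^{p+1}w_{j,n}^p$ via an energy identity, then shows that unboundedness of $c_{i,n}$ forces $M_nc_{i,n}^p\le C$ (hence $M_n\to0$), and finally excludes this via decay estimates (Lemma~\ref{lem: decay with perturbations_s}) and a Liouville argument. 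You correctly flag ``the clean-up in case~\emph{(iii)}'' as the main obstacle, but the actual difficulty is upstream of segregation: it is getting the blow-up to converge to something on which segregation even makes sense.
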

\begin{figure}[!h]
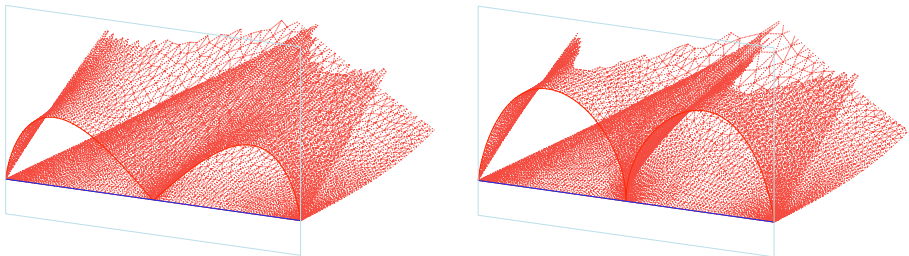

\centering
\includegraphics[width=.46\textwidth]{LV_interface.pdf}
\quad
\includegraphics[width=.46\textwidth]{BE_interface.pdf}
\caption{On the left, a numerical approximation of a limiting profile for problem 
$\problem{\beta}$ with Lotka-Volterra competition $p=q=1$ and $s=1/2$, for which Lipschitz 
continuity of the segregated traces is shown in Theorem \ref{thm: intro_lip2}. On the right,
the simulation for the analogous problem with Gross-Pitaevskii competition $p=1$, $q=2$, 
which optimal regularity, according to \cite[Theorem 1.2]{tvz1}, is only $\C^{0,1/2}$.
\label{fig: simulaz}}
\end{figure}
\begin{theorem}\label{thm: intro_lip2}
Under the assumption of the previous theorem, let furthermore $f_{i,\beta}\to f_i$ as $\beta\to\infty$, uniformly on compact sets, with $f_i$ Lipschitz continuous. For any limiting  profile $(v_1,v_2)$:
\begin{itemize}
 \item $v_1(x,0)$, $v_2(x,0)$ are Lipschitz continuous (optimal regularity of the traces);
 \item $v_1(x,0)\cdot v_2(x,0)=0$ (boundary segregation condition);
 \item $L_a v_1 = L_a v_2 = 0$ for $y>0$;
 \item $\partial_\nu^a (a_{21}v_1 - a_{12}v_2)=a_{21}f_1 - a_{12}f_2$  for $y=0$.
\end{itemize}
\end{theorem}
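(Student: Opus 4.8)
The plan is to deduce all four assertions from Theorem~\ref{thm: intro_hold2}, the key algebraic observation being that the competition terms cancel in the difference $a_{21}v_1-a_{12}v_2$. So fix a limiting profile $(v_1,v_2)$, i.e.\ a limit in $\left(H^{1;a}\cap\C^{0,\alpha}\right)(\overline{B^+_{1/2}})$, $\alpha<\alphaopt(s)$, of nonnegative solutions $(v_{1,n},v_{2,n})$ of $\problem{\beta_n}$ with $\beta_n\to\infty$; after a standard covering/rescaling of that theorem we may assume the convergence and the uniform bounds hold on $\overline{B^+_{3/4}}$. First I would settle the two ``soft'' claims. Letting $n\to\infty$ in the weak form of $L_a v_{i,n}=0$, tested against functions supported away from $\{y=0\}$, gives $L_a v_i=0$ for $y>0$. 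For the boundary segregation, testing the Steklov condition for $v_{1,n}$ against a nonnegative $\varphi$ supported in $B_{1/2}$ and isolating the quartic term shows that $a_{12}\beta_n\int v_{1,n}^p v_{2,n}^p\varphi\,\de x$ equals a quantity bounded uniformly in $n$ — the uniform $L^\infty$-bound on $f_{1,\beta}$ controls the reaction term and the uniform $H^{1;a}$-bound controls the remaining (weighted Dirichlet) integral; since $\beta_n\to\infty$ while $v_{i,n}\to v_i$ uniformly on $\overline{\partial^0 B^+_{1/2}}$, we conclude $v_1^p v_2^p\equiv 0$, hence $v_1(x,0)v_2(x,0)=0$, on $\partial^0 B^+_{1/2}$.

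Next I would pass to the difference $w_n:=a_{21}v_{1,n}-a_{12}v_{2,n}$. By linearity of $L_a$ and of $\partial^a_\nu$ the terms $\beta_n a_{12}a_{21}v_{1,n}^p v_{2,n}^p$ occurring in the two (rescaled) Steklov conditions cancel, so that $w_n$ solves the \emph{linear} problem
\[
L_a w_n=0\ \text{in }B^+_1,\qquad \partial^a_\nu w_n=a_{21}f_{1,\beta_n}-a_{12}f_{2,\beta_n}\ \text{on }\partial^0 B^+_1 ,
\]
whose right-hand side is bounded uniformly in $n$. As the traces $v_{i,n}(\cdot,0)$ converge uniformly and remain bounded while $f_{i,\beta}\to f_i$ uniformly on compact sets, passing to the limit in the weak formulation yields that $w:=a_{21}v_1-a_{12}v_2$ satisfies $L_a w=0$ for $y>0$ and $\partial^a_\nu w=a_{21}f_1-a_{12}f_2$ on $\partial^0 B^+_{1/2}$, which is the fourth assertion.

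It then remains to prove that $v_1(x,0)$ and $v_2(x,0)$ are Lipschitz. By the boundary segregation,
\[
a_{21}\,v_1(x,0)=w^+(x,0),\qquad a_{12}\,v_2(x,0)=w^-(x,0)\qquad\text{on }\partial^0 B^+_{1/2} ,
\]
so it suffices to show $w(\cdot,0)$ Lipschitz. Since $f_i$ is Lipschitz and $v_i(\cdot,0)\in\C^{0,\alpha}$ for every $\alpha<\alphaopt(s)$, the Steklov datum $g:=a_{21}f_1(\cdot,v_1(\cdot,0))-a_{12}f_2(\cdot,v_2(\cdot,0))$ belongs to $\C^{0,\alpha}(\overline{\partial^0 B^+_{1/2}})$. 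Boundary Schauder estimates for $L_a$ with nonhomogeneous weighted Neumann condition then yield a gain of $2s$ in the Hölder scale: when $2s+\alpha>1$ (which can be arranged directly as soon as $s>1/4$, by taking $\alpha$ close to $\alphaopt(s)$) this already makes $w(\cdot,0)$ Lipschitz, in fact of class $\C^{1,\delta}$ for some $\delta>0$; when $2s+\alpha<1$ — possible only for $s\le1/4$ — it only gives $w(\cdot,0)\in\C^{0,2s+\alpha}$, and one has to bootstrap. The displayed identity transfers the improved Hölder exponent of $w(\cdot,0)$ to $v_1(\cdot,0),v_2(\cdot,0)$, hence to $g$, so the Schauder estimate re-applies with exponent raised by $2s$; choosing the initial $\alpha$ so that $2s+\alpha$ never falls on an integer, after finitely many steps one reaches $2s+\alpha>1$ and obtains $w(\cdot,0)\in\C^{1,\gamma}$ for some $\gamma>0$. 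As positive and negative parts of a Lipschitz function are Lipschitz, the displayed identity finally gives the (optimal) Lipschitz regularity of $v_1(x,0)$ and $v_2(x,0)$.

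I expect the only substantial point to be the boundary regularity used above: establishing, in this localized weighted framework and based only on the $L^\infty$/$H^{1;a}$ control of $w$, the Schauder-type gain $\C^{0,\alpha}\to\C^{0,2s+\alpha}$ (respectively $\to\C^{1,2s+\alpha-1}$) up to $\{y=0\}$ for $L_a$ with Steklov datum $\partial^a_\nu w=g$, and organizing the finite bootstrap when $s\le1/4$. Everything else — the limiting passages and the cancellation producing the linear problem for $w_n$ — is essentially formal once Theorem~\ref{thm: intro_hold2} is in hand.
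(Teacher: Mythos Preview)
Your proposal is correct and follows essentially the same route as the paper: the segregation condition together with the cancellation of the competition in $w=a_{21}v_1-a_{12}v_2$ reduces everything to regularity for a single Steklov problem $L_a w=0$, $\partial^a_\nu w=a_{21}f_1-a_{12}f_2$. The only packaging difference is in the last step: rather than freezing $v_1,v_2$ in the datum, treating $g$ as a $\C^{0,\alpha}$ function of $x$, and iterating the $2s$-gain Schauder estimate by hand, the paper observes that the relations $v_1(x,0)=a_{21}^{-1}w^+(x,0)$ and $v_2(x,0)=a_{12}^{-1}w^-(x,0)$ turn the boundary condition into $\partial^a_\nu w=g(x,w)$ with $g(x,t)$ Lipschitz in both variables, and then invokes a ready-made semilinear regularity result (Silvestre) giving $w\in\C^{1,\alpha}$ in one shot; this is exactly your bootstrap, already carried out in the cited reference.
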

\begin{remark}
In the previous results $B^+_{1/2}$ can be replaced by any domain $\Omega\cap\{y>0\}$, where
$\overline{\Omega} \subset B_1$.
\end{remark}
\begin{remark}
Throughout this paper, we restrict our discussion to nonnegative solutions only to avoid technicalities.
Reasoning as in \cite{tvz1}, also changing sign solutions can be considered, once the competition is suitably
extended to negative densities.
\end{remark}
\begin{remark}
The upper bound $\alpha=2s$ for the regularity of the functions $v_{i,\beta}$ can not be removed:
indeed, from any solution of $\problem{\beta}$ we can construct another solution having
$(k+1)$ components, by defining $v_{k+1,\beta}(x,y)=y^{2s}$, $f_{k+1,\beta}\equiv-2s$. One
may possibly expect to be able to remove such threshold by considering only the regularity of the traces
$v_{i,\beta}(x,0)$, as suggested by Theorem \ref{thm: intro_lip2}.

On the other hand, the Lipschitz regularity is the natural one, at least for the traces, since the
last condition in Theorem \ref{thm: intro_lip2} implies that $v_i(x,0)$ are (proportional to)
the positive/negative parts of a
regular function.
\end{remark}
Next, we address the case of $k\geq3$ densities.
\begin{theorem}\label{thm: intro_holdk}
Let $k\geq3$. Then there exists $\alpha^*>0$ such that Theorem \ref{thm: intro_hold2} holds for any $\alpha<\alpha^*$, under the further assumption that
\[
\text{either }p\geq 1
\qquad
\text{ or }
\qquad a_{ij}=1\text{ for every }j\neq i.
\]
Furthermore, if $a_{ij}=1$,
\[
\alpha^*=\alphaopt(s)=\min(2s,1)
\]
whenever $s=1/2$ or $s\in(0,1/4)$.
\end{theorem}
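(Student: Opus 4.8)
The plan is to prove Theorem~\ref{thm: intro_holdk} by reduction to the two-density analysis, following the strategy that must already underlie Theorems~\ref{thm: intro_hold2} and~\ref{thm: intro_lip2}, and then by exploiting the two special structures ($p\ge 1$ or $a_{ij}\equiv1$) to restore enough compactness when $k\ge 3$. First I would set up the standard contradiction/blow-up scheme: assuming the uniform H\"older bound fails for some $\alpha<\alpha^*$, one selects sequences $\beta_n\to\infty$, points $x_n$, and scales $r_n\to0$ realizing the worst oscillation, performs the parabolic-type rescaling adapted to the weight $|y|^a$, and obtains a blow-up limit $\bv$ defined on $\R^{N+1}_+$ that is a nonnegative, nontrivial, globally H\"older entire solution of a limiting segregated system, with controlled growth at infinity. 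The core of the matter is then a Liouville-type theorem: such an entire profile with sublinear (or, more precisely, sub-$\alphaopt$) growth must be trivial, contradicting the normalization. For $k=2$ the two-density interaction permits a clean ACF/Almgren monotonicity argument and a clean-up lemma, which is exactly why $\alpha^*=\alphaopt(s)=\min(2s,1)$ there.

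For general $k\ge3$ with $p\ge1$, the idea is that the condition $p\ge1$ gives convexity-type control: the nonlinearity $v_i^p$ is convex, so differential inequalities of the form $L_a v_i \le 0$ away from the common zero set, together with $\partial^a_\nu v_i \le f_{i,\beta}$, survive the limit and allow an Alt--Caffarelli--Friedman-type estimate even when three or more components meet, yielding \emph{some} positive exponent $\alpha^*$ (not necessarily optimal) via the standard ACF lower bound on the growth of the energy of a subharmonic function vanishing on a large set. In the alternative regime $a_{ij}=1$ for all $j\ne i$, the system has extra symmetry: the function $w=\sum_i v_i$ satisfies $L_a w=0$ in $B_1^+$ and $\partial^a_\nu w = \sum_i f_{i,\beta} - \beta\sum_{i}v_i^p\sum_{j\ne i}v_j^p$, and more importantly the differences $v_i-v_j$ and the combinations $v_i - \sum_{l\ne i} v_l$ satisfy good equations with only Steklov (lower-order) perturbations. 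Here one can run the full machinery: a monotonicity formula of Almgren type across the free boundary, a classification of admissible vanishing orders, and the associated clean-up lemma excluding self-segregation, which pushes $\alpha^*$ up to $\alphaopt(s)=\min(2s,1)$; the restriction to $s=1/2$ or $s\in(0,1/4)$ is exactly the range in which the relevant eigenvalue estimates / clean-up arguments on the half-space with weight $|y|^{1-2s}$ can be carried through — for $s=1/2$ the weight disappears and the local problem is the classical harmonic Steklov one, while for $s<1/4$ one has $2s<1/2$ and the competing vanishing orders are well separated from the obstruction $y^{2s}$.

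Concretely the steps are: (1) extend the $\beta$-uniform $\C^{0,\alpha}$ estimate for $\alpha<\alpha^*$ via the blow-up contradiction, reducing to the Liouville theorem; (2) prove the Liouville theorem for $k\ge3$ entire segregated profiles of growth less than $\alpha^*$ — using the convexity argument under $p\ge1$ to get an ACF inequality, or the linear-combination trick under $a_{ij}\equiv1$ to get an Almgren frequency bound; (3) under $a_{ij}\equiv1$, establish the monotonicity formula for the frequency function $N(v,X_0,r)$ associated to the segregated limit, show it is well defined and bounded, classify the possible values of $N(0^+)$ in the ranges $s=1/2$ and $s\in(0,1/4)$, and in particular rule out $N(0^+)$ corresponding to one-dimensional profiles like $y^{2s}$ (this is the clean-up lemma); (4) conclude $\alpha^*=\min(2s,1)$ by combining the optimal growth control with the $\C^{0,\alpha}$ estimate and extract the limiting profile as in Theorem~\ref{thm: intro_hold2}. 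The main obstacle is step~(2)--(3) in the regime $a_{ij}\equiv1$: getting a clean monotonicity formula and, above all, a \emph{clean-up lemma} that excludes a component self-segregating against the weight $y^{2s}$ — this is precisely the point that fails in general $s$ (mirroring the loss from $\alphaopt^{\mathrm{GP}}=s$ to only $2s-1$ for $1/2<s<1$ in the Gross--Pitaevskii case), and it is why the optimal statement is confined to $s=1/2$ and $s\in(0,1/4)$. A secondary difficulty is that under $p\ge1$ but $a_{ij}$ nonconstant one only obtains differential \emph{inequalities}, so the best one can hope for is the non-optimal $\alpha^*>0$, and care is needed that this $\alpha^*$ is uniform in the data.
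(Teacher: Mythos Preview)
Your high-level architecture (blow-up contradiction $\Rightarrow$ entire profile $\Rightarrow$ Liouville via monotonicity) matches the paper, and your treatment of the $p\ge 1$ case is essentially right: an ACF-type formula with exponent $\nu(s,N)>0$ (defined through pairs of subsets of $\S^N_+$ whose \emph{traces} on $\{y=0\}$ are disjoint) gives some positive $\alpha^*$.

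However, for the case $a_{ij}\equiv1$ you have both the wrong tool and the wrong explanation of the restriction on $s$. The paper does \emph{not} use an Almgren frequency function, nor a clean-up lemma, nor any classification of vanishing orders. The key structural observation you are missing is that when $a_{ij}=1$, the \emph{differences} $z=w_i-w_j$ of blow-up components satisfy $L_a z^\pm \le 0$ in $\R^{N+1}_+$ with $z^\pm \partial_\nu^a z^\pm \le 0$ on $\R^N$, and $z^+,z^-$ are disjoint in the \emph{full} half-space (not only on the boundary). This permits a genuine Alt--Caffarelli--Friedman formula with the larger exponent
\[
\mu(s,N)=\inf\Bigl\{\tfrac{\gamma(\lambda_1(\omega_1))+\gamma(\lambda_1(\omega_2))}{2}:\ \omega_1,\omega_2\subset\S^N_+,\ \omega_1\cap\omega_2=\emptyset\Bigr\},
\]
where the disjointness is on the whole half-sphere, not just on $\partial\S^N_+$. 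The Liouville theorem then kills any blow-up with growth below $\min(2s,\mu)$.

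Consequently, the restriction to $s=1/2$ or $s\in(0,1/4)$ has nothing to do with excluding $y^{2s}$-type self-segregation or with gaps between admissible frequencies. It is purely a matter of how much one knows about $\mu(s,N)$: for $s=1/2$ the classical ACF result gives $\mu=1$ exactly, while for general $s$ the paper can only prove the elementary bound $\mu(s,N)\ge 1/2$ (via $\lambda_2(\S^N_+)$). Thus $\alpha^*=\min(2s,\mu)=\min(2s,1)$ is guaranteed precisely when $2s\le 1/2$, i.e.\ $s<1/4$, or when $s=1/2$. Your proposed route through Almgren and a clean-up lemma is not what is done here, and it is unclear that it would succeed; in any case, the reason you give for the $s$-range is incorrect.
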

%
Even though we can show quasi-optimality only in some cases, the above regularity result is sufficient to conclude
that, as $\beta\to\infty$, solutions of $\problem{\beta}$ accumulate to limiting profiles $v_i$ which properties,
apart from optimal regularity, are analogous to those described in Theorem \ref{thm: intro_lip2} for the case $k=2$
(see Section \ref{sec: lim_prof} for further details). In particular, going back to the segregated traces $u_i(x)=v_i(x,0)$, we can show that
\begin{equation}\label{eqn: seg sys flap_lv}
    u_i \left[ (- \Delta )^s \left(u_i-\tsum_{j\neq i} \frac{a_{ij}}{a_{ji}}u_j\right) -
    \left(f_i-\tsum_{j\neq i} \frac{a_{ij}}{a_{ji}}f_j\right) \right] = 0, \quad u_i u_j = 0\text{ for }j\neq i.
\end{equation}
Comparing with equation \eqref{eqn: seg sys flap_gs}, we see that, if $s<1$, the Gross-Pitaevskii competition and the Lotka-Volterra one exhibit deep differences not only from the point of view of the optimal regularity exponent, but
also from that of the differential equations satisfied by the segregated limiting profiles.
This is in great contrast with the case $s=1$ where, as we already mentioned, the two competitions can not
be distinguished from each other in the limit. Such feature is caused by the non local nature of the diffusion operators: indeed equation \eqref{eqn: seg sys flap_lv} can not be directly reduced to \eqref{eqn: seg sys flap_gs}, since in the set $\{u_i=0\}$ the corresponding fractional laplacian does not necessarily vanish. Nonetheless,
letting $s \to 1^-$, we recover the local nature of the equation: as a consequence
\[
   u_i  (- \Delta )^s \left(u_i-\sum_{j\neq i} \frac{a_{ij}}{a_{ji}}u_j\right) \to u_i(- \Delta u_i),
\]
so that equation \eqref{eqn: seg sys lap} arises also in this case.

To conclude, we mention that the equations just discussed --or, better, the corresponding ones for the extensions $v_i$-- can be used to obtain further regularity for the limiting profiles, also in the case $a_{ij}\neq1$. In particular, we have the following result.
\begin{theorem}\label{thm: intro_lastholdk}
Let $k\geq3$, $s=1/2$, $p>1$. If furthermore $f_i(x,t_1,\dots,t_k)=0$ for $|(t_1,\dots,t_k)|$ small then
every segregated limiting profile $v_i$ is $\C^{0,\alpha}$, for every $\alpha<1$.
\end{theorem}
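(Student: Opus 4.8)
The plan is to bootstrap from the sub-optimal Hölder regularity already granted by Theorem \ref{thm: intro_holdk} to the quasi-optimal exponent $\alpha<1$, exploiting the special structure of the limiting equation when $s=1/2$ and the vanishing of the reaction near the origin. First I would record the starting point: by Theorem \ref{thm: intro_holdk} (applicable since $s=1/2$ and $p>1\geq1$), every limiting profile $v_i$ belongs to $\C^{0,\alpha_0}$ for some $\alpha_0>0$, and by the limiting analysis of Section \ref{sec: lim_prof} the traces $u_i=v_i(\cdot,0)$ are segregated, $u_iu_j=0$, and satisfy \eqref{eqn: seg sys flap_lv}. The crucial observation is that, since $f_i$ vanishes in a neighborhood of $0$ in the $t$-variables, in the segregated regime each $f_i$ vanishes identically near the free boundary $\Gamma=\partial\{u_i>0\}$; hence, locally around $\Gamma$, the combination $w_i:=u_i-\sum_{j\neq i}\tfrac{a_{ij}}{a_{ji}}u_j$ satisfies $u_i\,(-\Delta)^{1/2}w_i=0$, with no forcing term. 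Equivalently, passing to the extension, $w_i$ has the harmonic extension $W_i=v_i-\sum_{j\neq i}\tfrac{a_{ij}}{a_{ji}}v_j$ with $L_aW_i=0$ in $B^+$ (here $a=0$) and $\partial^a_\nu W_i=0$ on the portion of $\partial^0B^+$ where the reaction vanishes, i.e.\ on a neighborhood of $\Gamma$.

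Next I would localize around an arbitrary free boundary point $x_0\in\Gamma$ and set up a blow-up/almost-harmonic comparison argument. Away from $\Gamma$, in the open set $\{u_i>0\}$, the density $v_i$ is a classical solution of $L_av_i=0$ with $\partial^a_\nu v_i = f_i$, so interior elliptic estimates for the degenerate operator $L_a$ (as in \cite{cs,tvz1}) give $\C^{0,\alpha}$ bounds for every $\alpha<1$ there, with constants blowing up only as one approaches $\Gamma$. The whole game is therefore the behavior near $\Gamma$. On $\{u_i>0\}$ one has $v_i=W_i+\sum_{j\neq i}\tfrac{a_{ij}}{a_{ji}}v_j$, and by segregation only one density is positive at a time on $\partial^0B^+$; so near a boundary point of $\Gamma$ the extension $v_i$ agrees, on the positivity side, with $W_i$ up to contributions from the other $v_j$, which vanish on that side of the trace. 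The idea is to show that $W_i$, being a solution of a homogeneous Neumann problem for $L_a$ (with $a=0$, the classical Laplacian reflected evenly across $\{y=0\}$ near $\Gamma$), is smooth across $\Gamma$; then $v_i$, which is the positive part (in a suitable sense) of the trace of such a smooth function up to the lower-order pieces $\sum_j\tfrac{a_{ij}}{a_{ji}}v_j$, inherits the needed regularity by an induction on the number of components or a Harnack-type argument controlling the oscillation of $v_i$ in dyadic half-balls centered at $\Gamma$.

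Concretely, I would run the following iteration. Fix $\alpha<1$. For $x_0\in\Gamma$ and small $r$, estimate $\osc_{B^+_r(x_0)}v_i$ by splitting $v_i=W_i + R_i$ with $R_i=\sum_{j\neq i}\tfrac{a_{ij}}{a_{ji}}v_j$: the term $W_i$ is controlled by $C r$ because $W_i$ solves a homogeneous problem and $W_i(x_0,0)=0$ (since $u_i(x_0)=u_j(x_0)=0$ at the free-boundary point), giving $|W_i|\le Cr$ in $B^+_r(x_0)$ by boundary regularity for the even reflection; while $R_i$ is a combination of the other densities, each of which has small oscillation near $x_0$ by the same argument applied cyclically, or — more robustly — by the already-known $\C^{0,\alpha_0}$ bound plus the fact that $R_i=0$ on the side $\{u_i>0\}$. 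Iterating the resulting decay inequality $\osc_{B^+_{r/2}(x_0)}v_i \le \tfrac12\,\osc_{B^+_{r}(x_0)}v_i + Cr$ over dyadic scales yields $\osc_{B^+_r(x_0)}v_i\le Cr$ for $x_0\in\Gamma$, hence a Lipschitz-type bound at $\Gamma$; combined with the interior estimates on $\{u_i>0\}$ and a standard patching lemma (interpolating between the clean interior scale and the free-boundary scale, exactly as in \cite{tt,tvz1}) this upgrades $v_i$ to $\C^{0,\alpha}$ for every $\alpha<1$.

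The main obstacle I anticipate is justifying the ``homogeneous problem'' claim for $W_i$ near $\Gamma$ with enough uniformity. The subtlety is that the complement $\{u_i=0\}$ need not be where the fractional Laplacian of $w_i$ vanishes — as the authors stress right after \eqref{eqn: seg sys flap_lv} — so one cannot naively treat $W_i$ as a globally harmonic extension; the vanishing of the reaction near $0$ is precisely what rescues the argument, but one must check that the segregation forces $f_i$ and all the $f_j$ to vanish \emph{simultaneously} in a full neighborhood of $\Gamma$ (not merely on $\Gamma$), which requires the uniform continuity of the limiting densities and the hypothesis that $f_i$ vanishes for small values of \emph{all} arguments. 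A secondary difficulty is the free-boundary point $x_0$ where $u_i(x_0)=0$ but possibly $u_j(x_0)\ne0$ for some $j\ne i$, i.e.\ $x_0$ lies in the interior of the support of $u_j$: there $v_i$ does not reduce cleanly to $W_i$, and one must instead use the equation for $v_i$ itself, noting that $v_i\equiv0$ on an open subset of $\partial^0B^+$ near $x_0$, so that $v_i$ solves $L_av_i=0$ with mixed homogeneous Dirichlet/Neumann data and one invokes boundary regularity for the $L_a$ operator with such data (available for $s=1/2$, $a=0$ by even/odd reflection) — this case is more delicate and is likely where the restriction $s=1/2$ is genuinely used.
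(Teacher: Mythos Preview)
Your proposal contains a genuine gap at its core step. You claim that near the free boundary $\Gamma$ the function $W_i=v_i-\sum_{j\neq i}\tfrac{a_{ij}}{a_{ji}}v_j$ satisfies $\partial_\nu^a W_i=0$ on a full neighborhood of $\Gamma$ in $\partial^0 B^+$, so that its even reflection is harmonic and hence smooth across $\Gamma$. This is false for $k\geq 3$. From the limiting relations in Proposition~\ref{prp: segr_prop} (equivalently, from the passage to the limit in Lemma~\ref{lem: Mn to infty}) one only gets
\[
\partial_\nu^a W_i \geq 0 \quad\text{on }\partial^0 B^+,\qquad v_i\,\partial_\nu^a W_i = 0,
\]
near $\Gamma$ (where the $f_j$ vanish). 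Equality $\partial_\nu^a W_i=0$ is guaranteed only on $\{v_i(\cdot,0)>0\}$; on $\{v_i(\cdot,0)=0\}$ the triple products $v_j^p v_h^p$ with $h\neq i,j$ survive in the limit as a nonnegative measure, so $\partial_\nu^a W_i$ may be a nontrivial nonnegative measure there. Thus the even reflection of $W_i$ is merely superharmonic, not harmonic, and your oscillation bound $|W_i|\leq Cr$ in $B_r^+(x_0)$ has no justification. (For $k=2$ the triple products are absent and $\partial_\nu^a W_i=0$ does hold---this is exactly why Theorem~\ref{thm: intro_lip2} is so short.) Your iteration scheme also conflates trace vanishing with vanishing in the half-space: $R_i=\sum_{j\neq i}\tfrac{a_{ij}}{a_{ji}}v_j$ is not zero in $B^+$ above $\{u_i>0\}$, only its trace is.

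The paper's proof proceeds quite differently and does not attempt to show $W_i$ is smooth. It argues by contradiction on the Morrey quotient $\Phi(X,r)=r^{-(N+1-2\eps)}\int_{B_r^+(X)}|\nabla\bv|^2$, rescales around a putative blow-up point on $\partial^0 B^+$ (reduced to multiplicity $\geq 2$ via the thin-obstacle regularity of Lemma~\ref{lem: mulitplicity 1}), and then exploits precisely the superharmonicity/subharmonicity of $\hat v_i=W_i$ and of $\hat v_i^\pm$: the pair $(\hat v_i^+,\hat v_i^-)$ satisfies the hypotheses of the Alt--Caffarelli--Friedman formula (Lemma~\ref{thm: ACF_s} with $a=0$, $\mu=1$), which forces one of the two rescaled gradients to vanish, whence the limit has zero trace; a classical Liouville theorem then yields the contradiction. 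The ACF formula with exponent $\mu=1$ is the engine, and it is available precisely because $s=1/2$---this is where the restriction enters, not in any reflection trick for mixed boundary data.
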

\begin{remark}
Collecting together the results of Theorems \ref{thm: intro_holdk} and \ref{thm: intro_lastholdk}, we have that
for $s=1/2$ the limiting profiles are $\C^{0,\alpha}$, for every $\alpha<1$, when either $a_{ij}=1$ or $p>1$.
Since for $s=1/2$ we have that $L_a=(-\Delta)$, one may then try to apply the arguments contained in
\cite[Section 2]{ckl} (see also \cite[Section 5]{acf}). This should eventually imply that the traces of the limiting profiles are indeed Lipschitz continuous.
\end{remark}

\section{Preliminary results}
We devote this section to some results concerning the operator $L_a$ and solutions to some associated differential problem. Most of such results already appeared, even if in slightly different form, in the literature. The interested reader may refer to \cite{css,tvz1,tvz2} for further details.
\begin{lemma}[{\cite[Lemma 2.7]{css}}]\label{lem: first collection_s}
If $v$ is a non constant, global solution of $L_av=0$ in $\R^{N+1}$, with the property that
\[
    |v(X)| \leq C\left(1 + |X|^{\gamma}\right),
\]
then $\gamma \geq \min(2s,1)$. If furthermore $v(x,-y)=v(x,y)$ then $\gamma \geq 1$ (and $v$ is a polynomial).
\end{lemma}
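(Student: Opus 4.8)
The plan is to combine the Almgren monotonicity formula for $L_a$ with a blow-down analysis at infinity and with the classification of entire homogeneous $L_a$-harmonic functions. Set $\gamma^\ast:=\inf\{\gamma:\ |v(X)|\le C_\gamma(1+|X|^\gamma)\}$; since every admissible exponent is at least $\gamma^\ast$, it suffices to prove $\gamma^\ast\ge\min(2s,1)$. Moreover $\gamma^\ast>0$, because if $v$ had sub-polynomial growth the scale invariant De Giorgi--Nash--Moser estimate for the $A_2$-weight $|y|^a$, namely $[v]_{\C^{0,\alpha}(B_R)}\le CR^{-\alpha}\osc_{B_{2R}}v$, would force $v$ to be constant as $R\to\infty$.

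\emph{Blow-down.} For an entire solution of $L_av=0$ the Almgren frequency
\[
N(v,r):=\frac{r\int_{B_r}|y|^a|\nabla v|^2}{\int_{\partial B_r}|y|^a v^2}
\]
is non-decreasing, and it is constant, $\equiv d$, exactly when $v$ is homogeneous of degree $d$. Polynomial growth of order $\le\gamma$ forces $d_\infty:=\lim_{r\to\infty}N(v,r)<\infty$, the monotonicity formula then gives $\int_{\partial B_r}|y|^a v^2\approx r^{N+a+2d_\infty}$ as $r\to\infty$, and comparing, through the interior Harnack inequality, the growth of $\sup_{B_r}|v|$ with that of $\int_{\partial B_r}|y|^a v^2$ yields $d_\infty=\gamma^\ast$. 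Now let $v_r(X):=c_r\,v(rX)$, with $c_r$ chosen so that $\int_{\partial B_1}|y|^a v_r^2=1$. The uniform bound $N(v_r,\rho)=N(v,r\rho)\le d_\infty$ on bounded sets, Caccioppoli's inequality, the compact embedding $H^{1;a}(B_R)\hookrightarrow L^2_{|y|^a}(B_R)$ and the uniform $\C^{0,\alpha}$ estimates allow passing to a locally uniform limit $v_\infty$ as $r\to\infty$, with $v_\infty\not\equiv0$, $L_av_\infty=0$ in $\R^{N+1}$ and $N(v_\infty,\cdot)\equiv d_\infty$; hence $v_\infty$ is a nontrivial entire homogeneous $L_a$-harmonic function of degree $\gamma^\ast$.

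\emph{Classification of homogeneous solutions (the main obstacle).} It remains to exclude a nontrivial entire homogeneous solution $w=|X|^d\phi(X/|X|)$ of $L_aw=0$ with $d\in(0,\min(2s,1))$. Separating variables, $\phi$ is an eigenfunction, with eigenvalue $d(d+N+a-1)$, of a weighted Laplace--Beltrami operator on $\S^N$ carrying the weight $|\theta_{N+1}|^a$, and the claim is that its first positive eigenvalue equals $\min(2s,1)(\min(2s,1)+N+a-1)$, attained by $\sgn(\theta_{N+1})|\theta_{N+1}|^{2s}$ (the explicit solution $\sgn(y)|y|^{2s}$, of degree $2s$) when $2s\le1$, and by $\theta_1$ (the solution $x_1$) when $2s\ge1$. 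I would split $\phi$ into its parts even and odd in $\theta_{N+1}$. The odd part corresponds, on $\{y>0\}$, to the Dirichlet condition $w(\cdot,0)=0$, where degrees below $2s$ are ruled out by comparison with the barrier $\sgn(y)|y|^{2s}$ and the boundary Harnack principle for $L_a$. For the even part, of degree $d<1$, every $\partial_{x_i}w$ is an entire homogeneous $L_a$-harmonic function of negative degree, hence vanishes (the singularity at the origin being removable when $N\ge2$; the case $N=1$ is checked directly), so $w=w(y)=c|y|^d$; but then $L_aw=0$ in the whole $\R^{N+1}$ forces $d=2s$ together with the vanishing of the distributional term $-4sc\,\delta_{\{y=0\}}$, i.e. $c=0$. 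This gives $\gamma\ge\gamma^\ast=d_\infty\ge\min(2s,1)$, which is the core of the statement.

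\emph{The even case.} If $v(x,-y)=v(x,y)$, then $v_\infty$ is even as well, so the argument just given forbids $\gamma^\ast=d_\infty$ from lying in $(0,1)$, whence $\gamma\ge1$. Finally, $v$ is a polynomial: by interior derivative estimates $\partial_x^\beta v$ has growth of order $\le\gamma-|\beta|$, so for $|\beta|$ large it is a bounded entire $L_a$-harmonic function, hence constant, and $v$ is therefore polynomial in $x$; writing $v=\sum_\beta c_\beta(y)x^\beta$ and substituting into $L_av=0$ produces a triangular system of one-dimensional equations $-(|y|^a c_\beta')'=|y|^a q_\beta(y)$ with $q_\beta$ polynomial, whose solutions $c_\beta$ are forced to be polynomials once the spurious contributions $\propto\delta_{\{y=0\}}$ are required to cancel (using the evenness of $v$ at the top level). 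Hence $v$ is a polynomial.
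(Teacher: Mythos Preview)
The paper does not prove this lemma: it is stated with the citation \cite[Lemma~2.7]{css} and no argument is given before the next statement. So there is no proof in the present paper to compare your proposal against.

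That said, your plan is a sound and essentially complete route to the result. The Almgren blow-down producing a nontrivial entire homogeneous $L_a$-harmonic profile of degree $d_\infty=\gamma^\ast$, followed by the even/odd (in $y$) classification of such profiles, is the standard strategy and it works; the odd part is cleanly handled by boundary Harnack against $\sgn(y)|y|^{2s}$. One small caveat in the even-part step: the removable-singularity argument for $\partial_{x_i}w$ requires $d-1>-(N-2s)$, i.e.\ $d>2s+1-N$. This is automatic from $d>0$ when $N\ge3$, and when $N=2$ with $s<1/2$; but for $N=2$ and $s>1/2$ it asks $d>2s-1>0$, which is not granted by $d>0$ alone. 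This is easily patched---for instance, the trace $u=w(\cdot,0)$ of an even homogeneous profile is a tempered distribution with $|\xi|^{2s}\hat u=0$, hence a polynomial, forcing $d\in\N$---but as written your $\partial_{x_i}$ shortcut leaves a small gap in that range (and, as you note, $N=1$ must be done separately in any case).
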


\begin{lemma}[{\cite[Proposition 11]{tvz2}}]\label{lem: prescribed constant normal derivative_s}
Let $v$ satisfy
\begin{equation*}
    \begin{cases}
    L_a v = 0 & \text{in } \R^{N+1}_+ \\
    \partial_{\nu}^a v = \lambda  & \text{on } \R^{N}
    \end{cases}
\end{equation*}
for some $\lambda \in \R$, and
\[
    |v(X)| \leq C\left(1 + |X|^{\gamma}\right),
\]
for some $0\leq \gamma < \min(2s,1)$. Then $v$ is constant.
\end{lemma}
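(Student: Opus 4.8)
The plan is to strip off the inhomogeneous co-normal datum by subtracting an explicit solution, and then to reduce the statement to the Liouville-type result for even global solutions, Lemma \ref{lem: first collection_s}. First I would note that, since $a=1-2s$, the function $\phi(y):=\frac{\lambda}{2s}\,y^{2s}$ satisfies $L_a\phi=0$ in $\R^{N+1}_+$ and $\partial^a_\nu\phi=-\lambda$ on $\R^N$: indeed $|y|^a\nabla\phi$ equals the constant vector $\lambda\,e_{N+1}$ on $\{y>0\}$, which is divergence free and has inner trace $-\lambda$ on $\{y=0\}$. Consequently $w:=v+\phi$ solves $L_a w=0$ in $\R^{N+1}_+$ together with the homogeneous co-normal condition $\partial^a_\nu w=0$ on $\R^N$.

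The vanishing co-normal derivative is exactly what allows the even reflection $\tilde w(x,y):=w(x,|y|)$ to be a global weak solution of $L_a\tilde w=0$ in the whole $\R^{N+1}$ (the standard reflection principle for the degenerate operator $L_a$, cf.\ \cite{css}). By construction $\tilde w$ is even in $y$ and inherits the polynomial bound
\[
|\tilde w(X)|\le C\left(1+|X|^{\gamma}\right)+\tfrac{|\lambda|}{2s}|X|^{2s}\le C'\left(1+|X|^{\mu}\right),
\qquad \mu:=\max(\gamma,2s)<2,
\]
the last inequality because $\gamma<1$ and $s<1$. Lemma \ref{lem: first collection_s} then forces $\tilde w$ to be a polynomial (trivially so if it is constant), and the growth bound with $\mu<2$ forces $\deg\tilde w\le 1$; since $\tilde w$ is even in $y$, any term linear in $y$ drops, so $\tilde w(x,y)=c_0+\mathbf{c}\cdot x$ is independent of $y$, with $c_0\in\R$ and $\mathbf{c}\in\R^N$.

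It remains to feed this back into the hypothesis on $v$. For $y\ge 0$ we have $v(x,y)=c_0+\mathbf{c}\cdot x-\frac{\lambda}{2s}y^{2s}$; letting $y\to+\infty$ along $x=0$ and using $\gamma<2s$ yields $\lambda=0$, while letting $|x|\to\infty$ along $y=0$ and using $\gamma<1$ yields $\mathbf{c}=0$. Hence $v\equiv c_0$, which proves the claim.

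The argument is soft once Lemma \ref{lem: first collection_s} is granted; the two substantive points are the identification of the correct explicit solution $\phi$ to subtract — which relies on the algebraic identity $a+2s-1=0$, making $|y|^a\partial_y(y^{2s})$ constant — and the verification that the even reflection of $w$ is genuinely a solution across $\{y=0\}$, which uses precisely $\partial^a_\nu w=0$. I do not expect any serious obstacle beyond these: the conclusion is pinned down by the double use of the strict inequality $\gamma<\min(2s,1)$, once against the power $2s$ carried by $\phi$ and once against the linear growth left over from the polynomial $\tilde w$.
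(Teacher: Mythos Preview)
Your argument is correct. The paper does not supply its own proof of this lemma; it simply quotes the result from \cite[Proposition 11]{tvz2}, so there is nothing to compare your approach against directly. For what it is worth, the subtraction of the explicit particular solution $\frac{\lambda}{2s}y^{2s}$ followed by even reflection and an appeal to the Liouville result for $L_a$-harmonic functions is exactly the natural route and matches the spirit of the arguments in \cite{css,tvz2}; your final step, exploiting the two halves of the strict inequality $\gamma<\min(2s,1)$ to kill separately the $y^{2s}$ and the linear-in-$x$ pieces, is clean and complete.
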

The two last results we need are based on the following comparison principle.
\begin{lemma}[Comparison principle]
Let $u,v \in H^{1;a}(B^+)$ satisfy
\[
    \begin{cases}
    L_a u \leq 0            &\text{in $B_1^+$}\\
    \partial_{\nu}^a u \leq -M u^p + \delta  &\text{on $\partial^0 B_1^+$},
    \end{cases}
\qquad
    \begin{cases}
    L_a v \geq 0            &\text{in $B_1^+$}\\
    \partial_{\nu}^a v \geq -M v^p + \delta  &\text{on $\partial^0 B_1^+$},
   \end{cases}
\]
respectively. Then $u \leq v$ on $\partial^+ B_1^+$ implies $u \leq v$ on $B_1^+$.
\end{lemma}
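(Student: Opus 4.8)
The plan is to run the standard energy/test-function argument for the maximum principle, testing the difference of the two inequalities against the positive part of $u-v$.

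First I would set $w:=u-v$. Subtracting the two systems, $w$ is a weak $L_a$-subsolution in $B^+_1$, i.e. $L_a w\le0$, and on $\partial^0 B^+_1$ it satisfies $\partial_\nu^a w\le -M(u^p-v^p)$. Written against a test function $\varphi\in H^{1;a}(B^+_1)$ with $\varphi\ge0$ and vanishing trace on $\partial^+ B^+_1$, integration by parts (legitimate since $|y|^a$ is an $A_2$ weight, cf. the references recalled at the start of this section) turns these into $\int_{B^+_1}|y|^a\nabla w\cdot\nabla\varphi\le -M\int_{\partial^0 B^+_1}(u^p-v^p)\varphi$. Here one uses that on $\partial^+ B^+_1$ the test function vanishes, while on $\partial^0 B^+_1$ the outer co-normal boundary term is exactly the one defining $\partial_\nu^a$.

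Next I would pick $\varphi=w^+$. This is admissible: $w^+\in H^{1;a}(B^+_1)$ because $u,v$ are, and its trace on $\partial^+ B^+_1$ vanishes precisely by the hypothesis $u\le v$ there. Since $\nabla w\cdot\nabla w^+=|\nabla w^+|^2$, we get $\int_{B^+_1}|y|^a|\nabla w^+|^2\le -M\int_{\partial^0 B^+_1}(u^p-v^p)w^+$. The right-hand side is nonpositive: on $\{w^+>0\}=\{u>v\}$ the monotonicity of $t\mapsto t^p$ on $[0,\infty)$ (we work with nonnegative functions) gives $u^p-v^p\ge0$, and elsewhere $w^+=0$. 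Hence $\int_{B^+_1}|y|^a|\nabla w^+|^2=0$, so $w^+$ is constant on the connected set $B^+_1$, and that constant is $0$ by its boundary trace; therefore $w\le0$, i.e. $u\le v$ in $B^+_1$.

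The only genuinely delicate point — the \emph{main obstacle} — is the functional-analytic bookkeeping with the degenerate weight: that $w^+$ lies in $H^{1;a}(B^+_1)$ with the expected boundary traces, that its trace on $\partial^0 B^+_1$ makes the boundary integrals finite, and that the integration-by-parts identity encoding $\partial_\nu^a$ is valid in the weak sense with such test functions. This is by now routine for Muckenhoupt $A_2$ weights and is available in the references quoted in this section; for bounded $u,v$ there are moreover no integrability issues in the boundary term. Everything else is the textbook computation above.
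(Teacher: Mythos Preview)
Your proof is correct and follows essentially the same approach as the paper: set $w=u-v$, test the resulting differential inequality against $w^+$ (admissible since $w\le0$ on $\partial^+B_1^+$), and use the monotonicity of $t\mapsto t^p$ on $[0,\infty)$ to conclude that the boundary term has the right sign. The paper's version writes the boundary integrand as $\frac{u^p-v^p}{u-v}(w^+)^2$ rather than $(u^p-v^p)w^+$, but this is the same quantity on $\{w^+>0\}$.
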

\begin{proof}
Letting $w = u -v$, we obtain that $w$ is a solution to
\[
    \begin{cases}
    L_a w \leq 0            &\text{in $B_1^+$}\\
    \partial_{\nu}^a w \leq -M (u^p-v^p)  &\text{on $\partial^0 B_1^+$}\\
    w \leq 0  &\text{on $\partial^+ B_1^+$}.
    \end{cases}
\]
Testing the equation with $w^+$ and recalling that $p>0$ we find
\[
    \int_{B_1^+} y^a |\nabla w^+|^2 \mathrm{d}x\mathrm{d}y \leq - M \int_{\partial^0 B^+} \frac{u^p-v^p}{u-v} (w^+)^2 \mathrm{d}x \leq 0.
    \qedhere
\]
\end{proof}
\begin{lemma}\label{lem: decay with perturbations_s}
Let $M>0$ be any large constant and $\delta>0$ be fixed and let $h \in L^{\infty}(\partial^0B_1^+)$ with $\|h\|_{L^{\infty}} \leq \delta$. Any $v \in H^{1;a}(B_1^+)$ non negative solution to
\[
    \begin{cases}
    L_a v \leq 0            &\text{in $B_1^+$}\\
    \partial_{\nu}^a v \leq -M v^p + h  &\text{on $\partial^0 B_1^+$}
    \end{cases}
\]
verifies
\[
    \sup_{\partial^0 B_{1/2}^+} v \leq \frac{1+\delta}{M^{1/p}} \sup_{\partial^+B^+_1} v.
\]
\end{lemma}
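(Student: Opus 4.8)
The strategy is to bound $v$ from above by an explicit supersolution and then invoke the comparison principle just proved. Since $\|h\|_{L^\infty}\le\delta$, the given $v$ is in particular a subsolution of the problem obtained by replacing $h$ with the constant $\delta$; hence it suffices to exhibit $w\in H^{1;a}(B_1^+)$ with $L_a w\ge0$ in $B_1^+$, $\partial_\nu^a w\ge -Mw^p+\delta$ on $\partial^0 B_1^+$, and $w\ge v$ on $\partial^+ B_1^+$, because then $v\le w$ throughout $B_1^+$. Writing $L:=\sup_{\partial^+ B_1^+}v$ (and assuming $L>0$, the degenerate case being easier), it is enough that $w\ge L$ on $\partial^+ B_1^+$ and that $\sup_{\partial^0 B_{1/2}^+}w\le \frac{1+\delta}{M^{1/p}}L$.

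For the barrier I would take $w$ to be the solution of the \emph{linear} mixed boundary value problem $L_a w=0$ in $B_1^+$, $w=L$ on $\partial^+ B_1^+$, and $\partial_\nu^a w=-\lambda(w-\mu)$ on $\partial^0 B_1^+$, where $\mu:=(\delta/M)^{1/p}$, so that $-M\mu^p+\delta=0$, and $\lambda>0$ is the slope of a line through $(\mu,0)$ lying below the graph of $t\mapsto Mt^p-\delta$ on the range $[\mu,L]$: the tangent at $t=\mu$, i.e.\ $\lambda=pM\mu^{p-1}$, when $p\ge1$, and the secant joining $(\mu,0)$ to $(L,ML^p-\delta)$ when $p\le1$. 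In both cases the convexity/concavity of $t\mapsto t^p$ and the positivity of the slope of $Mt^p-\delta$ at $\mu$ make this possible, and $\lambda$ is large when $M$ is. Comparing $w$ with the constants $\mu$ and $L$, which are respectively a sub- and a supersolution of this Robin problem, the maximum principle gives $\mu\le w\le L$ in $B_1^+$; together with the chosen inequality $\lambda(w-\mu)\le Mw^p-\delta$ this yields $\partial_\nu^a w=-\lambda(w-\mu)\ge-Mw^p+\delta$ on $\partial^0 B_1^+$, so $w$ is a genuine supersolution. Since $v\le L=w$ on $\partial^+ B_1^+$, the comparison principle gives $v\le w$ in $B_1^+$.

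It remains to estimate $w$ on $\partial^0 B_{1/2}^+$, which is the core of the argument. Set $\phi:=w-\mu\ge0$; then $\phi$ is $L_a$-harmonic in $B_1^+$, $\phi\le L$ on $\partial^+ B_1^+$, and $\partial_\nu^a\phi=-\lambda\phi$ on $\partial^0 B_1^+$. Testing $L_a\phi=0$ against a function equal to $1$ on $\partial^0 B_1^+$ (equivalently, the weighted divergence theorem) identifies the flux of $\phi$ through $\partial^0 B_1^+$ with minus that through $\partial^+ B_1^+$; since $\phi$ attains its maximum on $\partial^+ B_1^+$, the latter flux is nonnegative and bounded, $\int_{\partial^+ B_1^+}y^a\partial_\nu\phi\le C_0 L$ with $C_0$ depending only on $N$ and $a$ — here one must check that the mild blow-up of $\partial_\nu\phi$ near the edge $\partial^+ B_1^+\cap\partial^0 B_1^+$ is absorbed by the weight $y^a$, $a>-1$. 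Consequently $\lambda\,\|\phi\|_{L^1(\partial^0 B_1^+)}\le C_0 L$. Finally $\phi$ is a nonnegative $L_a$-subharmonic function, so a local boundedness estimate for the weighted operator (De Giorgi--Nash--Moser, or a weighted sub-mean-value inequality) bounds $\sup_{\partial^0 B_{1/2}^+}\phi$ by a dimensional constant times $\|\phi\|_{L^1(\partial^0 B_{3/4}^+)}$, whence $\sup_{\partial^0 B_{1/2}^+}\phi\le C_1 L/\lambda$. Inserting the value of $\lambda$ and recalling $\mu=(\delta/M)^{1/p}$ then produces a bound of the shape $\sup_{\partial^0 B_{1/2}^+}w\le M^{-1/p}\bigl(\delta^{1/p}+\text{const}\cdot L\bigr)$, which one matches against $\frac{1+\delta}{M^{1/p}}L$.

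The step I expect to be delicate is this last one: making the decay estimate for $\phi$ \emph{uniform in $M$} (equivalently, in the Robin weight $\lambda$), which forces one to control the degenerate/singular weight $y^a$ up to $\partial^0 B_1^+$ and the edge behaviour in the flux identity, and then carrying out the bookkeeping of constants so that the geometric factor coming from the local boundedness inequality fits inside $1+\delta$. This is precisely where the hypothesis that $M$ be large (so that $\mu\ll L$) and the precise ratio of the two half-balls are used.
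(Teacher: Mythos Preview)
Your overall architecture --- build a supersolution, invoke the comparison principle, then estimate the barrier on the inner half-ball --- is the paper's too, but the paper does \emph{not} linearize: it constructs an explicit one-dimensional profile $f(x)=c\int_{-\infty}^x(1+t^2)^{-b/2}\,dt$ with $b=1+(1-a)/p$, checks that $(-\Delta)^s f\ge -Cf^p$ for $x<0$, rescales to $f_M(x)=f(M^{1/(2s)}x)$, symmetrizes to $g_M(x)=f_M(x-1)+f_M(-x-1)$, and takes the barrier to be $\delta M^{-1/p}$ plus the $L_a$-harmonic extension of $g_M$. The nonlinear exponent $p$ is baked into the profile from the outset, and the decisive estimate $g_M\le CM^{-1/p}$ on $(-\tfrac12,\tfrac12)$ is read off directly from the asymptotics of $f$.

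Your linear Robin barrier cannot reproduce this scaling when $p\neq1$, and that is a genuine gap rather than bookkeeping. For $p>1$ your tangent slope is $\lambda=pM\mu^{p-1}=p\,\delta^{(p-1)/p}M^{1/p}$; even granting the decay $\sup_{\partial^0 B_{1/2}^+}\phi\le C_1L/\lambda$, the bound on $w$ becomes $M^{-1/p}\bigl(\delta^{1/p}+C_1L\,p^{-1}\delta^{-(p-1)/p}\bigr)$, which blows up as $\delta\to0$ (and the lemma is used with $\delta=0$ in the very next result). For $p<1$ the secant gives $\lambda\approx ML^{p-1}$, hence $L/\lambda\approx L^{2-p}/M$, the wrong power of $M$. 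The point is structural: a linear Robin condition has one decay scale set by $\lambda$, while the nonlinear inequality $\partial_\nu^a v\le -Mv^p$ forces decay at scale $M^{-1/p}$; for $p\ne1$ no tangent or secant reconciles the two uniformly in $M,L,\delta$. There is also a secondary soft spot: your passage from $\lambda\|\phi\|_{L^1(\partial^0 B_1^+)}\le C_0L$ to a pointwise bound on $\partial^0 B_{1/2}^+$ is not a standard De Giorgi--Nash--Moser step, since the sub-mean-value inequality for the $L_a$-subharmonic even extension of $\phi$ controls $\sup\phi$ by a \emph{bulk} weighted $L^1$-norm on $B_{3/4}$, not by the $L^1$-norm of the trace on $\{y=0\}$.
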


\begin{proof}[Sketch of proof]
The proof is similar to the one of \cite[Lemma 3.2]{tvz2}, the only difference being in the choice of the supersolution. For $a \in (-1,1)$ and $p > 0$ fixed, let $b = 1 + (1-a)/p >1$
and $f \in AC(\R) \cap \C^{\infty}(\R)$ be defined as
\[
    f(x) = c\int_{-\infty}^{x} \frac{1}{(1 + t^2)^{b/2}} \de t,
\]
where $c$ is chosen in such a way that $f(+\infty)=1$.
Then, for some $C>0$, the estimate
\[
    (-\Delta)^s f (x) \geq -C f(x)^{p}
\]
holds for any $x < 0$. For $M>0$, the function $f_M(x) := f(M^{1/(2s)}x)$ satisfies
\[
    (-\Delta)^s f_M (x) = M^{2s/(2s)} \left[(-\Delta)^s f\right] (M^{1/(2s)}x) \geq -CM f_M^p(x).
\]
Therefore, if we let
\[
    g_M(x) := f_M(x-1) + f_M(-x-1)
\]
then
\[
    \left(f_M^p(x-1) + f_M^p(-x-1)\right)^{1/p} \leq c_p g_M,
\]
for some $c_p>0$. It follows that, for any $M>0$, it holds
\[
    \begin{cases}
    (-\Delta)^s g_M (x) \geq -CM g_M^p(x) &\text{ in $(-1,1)$ }\\
    g_M(x) \geq \frac12  &\text{ in $\R \setminus (-1,1)$ }\\
    g_M(x) \leq C M^{-1/p}  &\text{ in $\left(-\frac12,\frac12\right)$. }
    \end{cases}
\]
The lemma follows by comparison between $v$ and the supersolution (see \cite{cs})
\[
    w_{\delta} := \delta \frac{1}{M^{1/p}} + \int_{\R} y^{1-a}\frac{ g_{M} (x-\xi)}{(\xi^2+y^2)^{1-a/2}} \mathrm{d}\xi. \qedhere
\]
\end{proof}
\begin{lemma}\label{lem: global eigenfunction_s}
Let $\lambda>0$ and $v\in H^{1;a}_{\loc}(\overline{\R^{N+1}_+})$ be non negative and satisfy
\begin{equation*}
    \begin{cases}
    L_a v = 0 & \text{in } \R^{N+1}_+ \\
    \partial_{\nu}^a v \leq - \lambda v^p & \text{on } \R^{N}.
    \end{cases}
\end{equation*}
If the H\"older quotient of exponent $\gamma$ of $v$ is uniformly bounded, for some $\gamma \in [0,2s)$, then $v$ is constant.
\end{lemma}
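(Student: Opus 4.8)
The plan is to argue by contradiction via a blow-down (or rather, exploitation of the subsolution structure combined with the growth restriction) and then invoke the Liouville-type results already established. First, observe that since $v$ is nonnegative and $L_a v=0$ in $\R^{N+1}_+$, the function $v$ is $L_a$-harmonic in the interior, and the boundary inequality $\partial_\nu^a v \leq -\lambda v^p$ forces $v$ to behave like a supersolution for the decay estimate of Lemma \ref{lem: decay with perturbations_s}. The key quantitative input is this: on any half-ball $B_R^+$, after rescaling $v_R(X):=v(RX)$, one has $L_a v_R=0$ in $B_1^+$ and $\partial_\nu^a v_R \leq -\lambda R^{2s} v_R^p$ on $\partial^0 B_1^+$ (using $a=1-2s$ so that the normal derivative picks up a factor $R^{2s}$). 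Applying Lemma \ref{lem: decay with perturbations_s} with $M=\lambda R^{2s}$ and $h=0$ (i.e. $\delta$ arbitrarily small) yields
\[
\sup_{\partial^0 B_{R/2}^+} v \leq \frac{C}{(\lambda R^{2s})^{1/p}}\sup_{\partial^+ B_R^+} v.
\]

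Next, I would combine this with the Hölder bound. By hypothesis the Hölder quotient of exponent $\gamma<2s$ is globally bounded, say by $K$; since $v\geq 0$ and, evaluating at a point where $v$ is small, one gets $\sup_{B_R^+} v \leq \sup_{\partial^0 B_R^+} v + (\text{osc in }y) \leq C(1+R^\gamma)$ after controlling the vertical oscillation by the Hölder bound together with the $L_a$-harmonicity (interior estimates propagate the boundary trace bound into the half-ball, using the maximum principle for $L_a$ since $v$ is $L_a$-harmonic with controlled boundary data on $\partial^0$ and we may bound it on $\partial^+$ by the trace bound plus Hölder continuity up to scale $R$). Feeding $\sup_{\partial^+ B_R^+} v \leq C(1+R^\gamma)$ into the displayed decay inequality gives
\[
\sup_{\partial^0 B_{R/2}^+} v \leq \frac{C(1+R^\gamma)}{R^{2s/p}} \xrightarrow[R\to\infty]{} 0
\]
whenever $\gamma < 2s/p$; and more robustly, iterating the decay estimate dyadically (each doubling of the radius multiplies the sup by a factor $\leq C/(\lambda R^{2s})^{1/p}$, which eventually beats any polynomial growth rate $\gamma < 2s$ regardless of $p$, since the gain $R^{-2s/p}$ compounds geometrically while $R^\gamma$ grows only polynomially) forces $\sup_{\partial^0 B_r^+} v \to 0$ as $r\to\infty$. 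Hence $v(x,0)\equiv 0$ on $\R^N$. But then $\partial_\nu^a v \leq -\lambda v^p = 0$ on $\R^N$, so $v$ is an $L_a$-harmonic function in $\R^{N+1}_+$ with $v\equiv 0$ on the boundary and subexponential (indeed sub-$2s$ polynomial) growth; extending evenly across $\{y=0\}$ — which is legitimate because the vanishing trace together with $\partial_\nu^a v \leq 0$ and $v\geq 0$ forces $\partial_\nu^a v = 0$, making the even extension a global $L_a$-solution — we land exactly in the symmetric case of Lemma \ref{lem: first collection_s}, which says $\gamma \geq 1$ unless $v$ is constant. Since $\gamma<2s\leq$ (possibly) $<1$ or in any case our growth exponent is strictly below the admissible one after the decay argument has driven the trace to zero, $v$ must be constant; and a constant satisfying $0 \leq -\lambda v^p$ with $\lambda>0$, $p>0$ forces $v\equiv 0$, or more precisely the constant is admissible only if it is zero — but rereading the statement, a nonzero constant gives $\partial_\nu^a v = 0 \leq -\lambda v^p < 0$, a contradiction, so indeed $v\equiv 0$, which is in particular constant as claimed.

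The main obstacle I anticipate is making the transition from "the trace $v(\cdot,0)$ decays" to "the full extension $v$ is controlled" fully rigorous: one needs a clean maximum-principle / interior-estimate statement for $L_a$ on half-balls relating $\sup_{B_R^+} v$ to $\sup_{\partial^0 B_R^+} v$ and $\sup_{\partial^+ B_R^+} v$, and the Hölder bound is what lets us bootstrap $\sup_{\partial^+ B_R^+} v$ from the trace. The other delicate point is ensuring the dyadic iteration of Lemma \ref{lem: decay with perturbations_s} is set up so that the scaling factor $R^{2s}$ in $M=\lambda R^{2s}$ genuinely compounds — i.e. that at each step we may re-apply the lemma on the rescaled ball with the improved (smaller) sup as the new boundary datum, which requires checking that the rescaled $v_R$ still satisfies the hypotheses (it does, since $L_a v_R = 0$ and the boundary inequality only improves under the scaling). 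Once these scaling bookkeeping issues are handled, the reduction to Lemmas \ref{lem: first collection_s} and \ref{lem: prescribed constant normal derivative_s} is immediate. Indeed, an even cleaner route avoiding the even extension: once $v(\cdot,0)\equiv 0$ we may instead directly apply Lemma \ref{lem: prescribed constant normal derivative_s} with $\lambda=0$ (the normal derivative is pinched between $-\lambda v^p = 0$ from above and — by $v\geq 0$ attaining its minimum on the boundary trace, hence the inward normal derivative nonpositive, i.e. $\partial_\nu^a v \geq 0$ — from below, so $\partial_\nu^a v \equiv 0$), and since our growth exponent is $\gamma < 2s = \min(2s,1)$ when $s\leq 1/2$, or has been driven below $\min(2s,1)$ by the decay argument in general, we conclude $v$ is constant, hence zero.
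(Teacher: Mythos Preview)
Your argument is essentially correct for $p\leq 1$ and coincides with the paper's treatment of that case: a single application of the rescaled decay lemma gives $v(x_0,0)\leq C R^{\gamma-2s/p}\to 0$ since $\gamma<2s\leq 2s/p$.

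The gap is in the case $p>1$, where $2s/p<2s$ and one may well have $2s/p<\gamma<2s$. Your dyadic iteration does not close. The decay lemma yields
\[
b_{k-1}:=\sup_{\partial^0 B_{\rho_{k-1}}}v \;\leq\; C\rho_k^{-2s/p}\sup_{\partial^+ B_{\rho_k}}v,
\qquad \rho_k=2^k r,
\]
and the only way to relate $\sup_{\partial^+ B_{\rho_k}}v$ back to the flat boundary is the H\"older bound, giving $\sup_{\partial^+ B_{\rho_k}}v\leq b_k + K\rho_k^\gamma$. Unrolling the recursion $b_{k-1}\leq C\rho_k^{-2s/p}b_k + CK\rho_k^{\gamma-2s/p}$ and sending the outer scale to infinity, the product term $\big(\prod_k C\rho_k^{-2s/p}\big)b_n$ does vanish (this is your ``compounds geometrically'' observation), but the accumulated additive errors contribute a sum whose $j=1$ term is of order $r^{\gamma-2s/p}$, which diverges as $r\to\infty$ exactly when $\gamma>2s/p$. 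In short: the multiplicative gains compound, but the H\"older corrections introduced at each step do not; they are what prevents the iteration from improving on the single-step bound.

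The paper treats $p>1$ by a completely different mechanism. First it shows the trace $v(\cdot,0)$ is bounded: if not, near a point where $v$ is large one has $v^{p-1}\geq M_n\to\infty$, hence $\partial_\nu^a v\leq -M_n v$ there, and the decay lemma (now with linear exponent, so the comparison with the H\"older growth is favorable) yields a contradiction. Once the trace is bounded, translate along a maximizing sequence for $v(\cdot,0)$, pass to a uniform limit $\bar v$ whose trace attains its global maximum at the origin, identify $\bar v$ with the bounded $L_a$-harmonic extension of its trace, and apply the Hopf lemma at that interior maximum: $\partial_\nu^a\bar v(0,0)=-\lambda\bar v(0,0)^p\leq 0$ forces $\bar v(0,0)=0$, hence $v\equiv 0$. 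This two-step structure (bounded trace, then Hopf at the maximum) is the missing idea in your proposal.
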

\begin{proof}
When $p \leq 1$ the lemma follows directly from Lemma \ref{lem: decay with perturbations_s}: indeed, by translating
and scaling,
\[
    v(x_0,0) \leq \sup_{\partial^0 B_{r/2}(x_0,0)} v \leq \frac{1}{\lambda^{1/p} r^{2s/p}}\sup_{\partial^+ B_r(x_0,0)} v \leq C \frac{1+r^{\gamma}}{r^{2s/p}}\to0\quad\text{ as }r\to\infty.
\]
When $p > 1$, we start by showing that $v$ has a bounded trace on $\R^N$. Let us assume, on the contrary, that $v(x,0)$ is not uniformly bounded from above: by the uniform control on the H\"older seminorm, there exists a sequence $\{x_n\} \subset \R^N$ such that
\[
    M_n := \inf_{\partial^0B^+_1(x_n,0)} v^{p-1} \to +\infty.
\]
But then, restricting on $B^+_1(x_n,0)$, we have that $v \geq 0$ satisfies
\[
    \begin{cases}
    L_a v = 0 & \text{in } B^+_1(x_n,0) \\
    \partial_{\nu}^a v \leq -M_n v  & \text{on } \partial B^+_1(x_n,0)
    \end{cases}
\]
and, thanks to Lemma \ref{lem: decay with perturbations_s} (with exponent $1$ instead of $p$) and the H\"older continuity, we obtain
\[
    \begin{split}
    \inf_{\partial^0B^+_{1}(x_n,0)} v \leq \sup_{\partial^0B^+_{1/2}(x_n,0)} v \leq \frac{1}{M_n} \sup_{\partial^+B^+_1(x_n,0)} v 
    &\leq \frac{1}{M_n} \left(\inf_{\partial^0B^+_1(x_n,0)} v + C\right),
    \end{split}
\]
a contradiction. Let now $\{x_n\} \subset \R^N$ be a maximizing sequence of $v(x,0)$, that is
\[
    \sup_{x \in \R^N} v(x,0) = \lim_{n \to \infty} v(x_n,0) < \infty,
\]
and let us also introduce the sequence of functions
\[
    v_n(x,y) := v(x-x_n,y).
\]
The functions $v_n$ share the same uniform bound in $\C^{0,\gamma}$, so that we can pass to the uniform limit and find a limiting function $\bar v \in \C^{0,\gamma}(\overline{\R^{N+1}_+})$ which satisfies the assumptions of the lemma, its trace on $\R^N$ achieving the global maximum at $(0,0)$. Let us denote with $w$ the unique bounded $L_a$-harmonic extension of $\bar v(x,0)$ (which is defined since $\bar v(x,0)$ is bounded). We see that the odd extension across $\{y=0\}$ of the difference $w - \bar v$ satisfies the assumptions of Lemma \ref{lem: first collection_s}, yielding $\bar v \equiv w$. From the equation we deduce that
\[
    \partial_{\nu}^a \bar v(0,0) = - \lambda \bar v(0,0)^p = - \lambda \sup_{x \in \R^{N}} v(x,0)^p \leq 0
\]
and the Hopf Lemma implies $\bar v(0,0) = 0$, that is $v \equiv 0$.
\end{proof}
\section{The blow-up argument}

As we mentioned in the introduction, the proof of the a priori uniform $\C^{0,\alpha}$-bounds of solutions to problem $\problem{\beta}$ is based on a blow-up argument. To perform this technique, we will assume that the solutions are not a priori bounded in a uniform way in some H\"older norms and then, through a series of lemmas, we will show that this implies the existence of entire solutions to some limiting problem. The scheme of the proof here presented may resemble the one contained for instance in \cite{ctv} and also \cite{tvz1, tvz2}. However, in the present situation, some of the steps, which were adopted in the aforementioned papers, actually fail. This phenomenon is consequence of deep differences in the interaction between competition and diffusion features of the models. Once the blow-up procedure is completed, we will reach different contradictions in the next section, depending on the particular choice of $k$, $p$ and $a_{ij}$: for the moment, in what follows we will always assume that $p>0$, $a_{ij}>0$ for any $j\neq i$,
and that the reaction terms $f_{i,\beta}$ are continuous and map bounded sets into bounded sets, uniformly w.r.t. $\beta>0$ (notice that these are the common assumptions for all the statements in the introduction).

Let $\{\bv_\beta\}_\beta =\{(v_{1,\beta}, \dots, v_{k,\beta})\}_{\beta}$ denote a family of positive solutions
to problem $\problem{\beta}$, uniformly bounded in $B^+_1$.
We begin the analysis by recalling the regularity result which holds whenever $\beta$ is finite. For easier notation, we write $B^+=B^+_{1}$.
\begin{lemma}\label{lem: sire_dupaigne_s}
For every $0<\alpha<\min(2s,1)$, $\bar m>0$ and $\bar\beta>0$,
there exists a constant $C=C(\alpha,\bar m,\bar\beta)$ such that
\[
    \| \bv_\beta\|_{\C^{0,\alpha}\left(\overline{B^+_{1/2}}\right)} \leq C,
\]
for every $\bv_\beta$ solution of problem $\problem{\beta}$ on $B^+$, satisfying
\[
\beta\leq\bar\beta\quad\text{ and }\quad  \| \bv_{\beta} \|_{L^{\infty}(B^+)} \leq \bar m.
\]
\end{lemma}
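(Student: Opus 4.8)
The plan is to absorb the reaction and competition terms into a bounded right-hand side --- which is exactly where the hypothesis $\beta\le\bar\beta$ enters, so that the lemma becomes a purely linear regularity statement --- and then to invoke the regularity theory for the weighted operator $L_a$ with bounded Steklov data, applied componentwise. First I would observe that, since $\|\bv_\beta\|_{L^\infty(B^+)}\le\bar m$, $\beta\le\bar\beta$, and the $f_{i,\beta}$ map bounded sets into bounded sets uniformly in $\beta$, each Steklov datum
\[
h_{i}:=f_{i,\beta}(x,v_{1,\beta},\dots,v_{k,\beta})-\beta v_{i,\beta}^{p}\tsum_{j\neq i}a_{ij}v_{j,\beta}^{p}
\]
satisfies $\|h_{i}\|_{L^\infty(\partial^0B^+)}\le C_1=C_1(\bar m,\bar\beta)$. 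Thus it is enough to prove the following scalar statement: if $v\in H^{1;a}(B^+)$ has $\|v\|_{L^\infty(B^+)}\le\bar m$ and solves $L_a v=0$ in $B^+$, $\partial^a_\nu v=h$ on $\partial^0B^+$ with $\|h\|_{L^\infty(\partial^0 B^+)}\le C_1$, then $\|v\|_{\C^{0,\alpha}(\overline{B^+_{1/2}})}\le C(\alpha,\bar m,C_1)$ for every $\alpha<\min(2s,1)$; applying it to each $v_{i,\beta}$ gives the lemma.

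For this scalar estimate I would split $v=w+\psi$ on $B^+_{3/4}$, separating the inhomogeneous boundary datum from a homogeneous-Steklov remainder. Concretely, extend $h$ to a bounded, compactly supported function $\tilde h$ on $\R^N$ with $\|\tilde h\|_{L^\infty}\le C\|h\|_{L^\infty}$, and let $\psi$ be the solution of $L_a\psi=0$ in $\R^{N+1}_+$, $\partial^a_\nu\psi=\tilde h$ on $\R^N$, obtained by convolving $\tilde h$ with the half-space Neumann kernel of $L_a$, whose size is comparable to $(|x|^2+y^2)^{-(N-2s)/2}$ (see \cite{cs,css}); the classical estimates for this Riesz-type potential give $\psi\in\C^{0,\alpha}$ for every $\alpha<\min(2s,1)$, with norm controlled by $\|h\|_{L^\infty}$. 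Then $w:=v-\psi$ solves $L_a w=0$ in $B^+_{3/4}$ with the \emph{homogeneous} condition $\partial^a_\nu w=0$ on $\partial^0B^+_{3/4}$, so its even reflection across $\{y=0\}$ is a bounded weak solution of $L_a(\cdot)=0$ in $B_{3/4}$; by the De Giorgi--Nash--Moser theory for the $A_2$-weight $|y|^a$ (Fabes--Kenig--Serapioni), together with the refined interior estimates for $L_a$ of \cite{cs,css} (the coefficients of $L_a$ being independent of $x$, one bootstraps in the tangential variables, and the zero-flux condition rules out a $|y|^{2s}$ term in the boundary expansion of $w$), this reflection belongs to $\C^{0,\alpha}(\overline{B_{1/2}})$ for every $\alpha<1$. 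Adding back $\psi$ yields the claimed bound on $\overline{B^+_{1/2}}$, with a constant depending only on $\alpha$, $\bar m$ and $C_1$, hence on $\alpha$, $\bar m$, $\bar\beta$; a routine covering/scaling argument around the boundary points of $\partial^0B^+_{3/4}$ reduces the estimate on $B^+_{1/2}$ to what has just been described.

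The main obstacle is reaching \emph{every} exponent strictly below the sharp threshold $\min(2s,1)$: a crude application of the De Giorgi--Nash--Moser estimates to the reflected function only produces $\C^{0,\gamma_0}$ for some small, non-explicit $\gamma_0=\gamma_0(N,a)>0$, so the sharp count genuinely requires the fine mapping properties of the Poisson and Neumann kernels of $L_a$ on a half-space, as developed in \cite{cs,css} and already exploited in \cite{tvz1,tvz2}; it is the $y^{2s}$ behaviour of these kernels that simultaneously produces the exponent $\min(2s,1)$ and accounts for its optimality, consistently with the function $v=y^{2s}$ recalled above. Everything else --- the reduction to bounded Steklov data, the splitting, the localization --- is routine, so that the lemma is essentially a bookkeeping application, componentwise and with uniformly bounded data, of the extension-problem regularity theory recalled in the previous section.
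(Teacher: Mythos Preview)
Your proposal is correct and follows the same line as the paper: absorb the competition term into a uniformly bounded Steklov datum (this is where $\beta\le\bar\beta$ and $\|\bv_\beta\|_{L^\infty}\le\bar m$ are used), then invoke the linear $\C^{0,\alpha}$ regularity for $L_a v=0$, $\partial^a_\nu v\in L^\infty$, up to the sharp threshold $\min(2s,1)$. The paper simply cites \cite[Lemma 4.1]{tvz2} for this linear estimate, whereas you sketch its proof via the Neumann-kernel splitting $v=w+\psi$; the two arguments are the same in substance.
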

\begin{proof}[Sketch of the proof]
Since the functions involved are a priori in $L^\infty(B^+)$, we can apply the regularity result in \cite[Lemma 4.1]{tvz2} to obtain regularity of the solutions in $\C^{0,\alpha}$ spaces for every $\alpha < \min(2s,1)$ (see also the proof of \cite[Lemma 2.3]{dipierro_Symmetry}).
\end{proof}
%

Let the cut-off function $\eta$ be smooth, with
\begin{equation*}
\eta(X) =
 \begin{cases}
 1   &  X\in B_{1/2}\\
 0   &  X\in\R^{N+1}\setminus B_1
 \end{cases} \quad \text{while} \quad \eta(X) \in (0,1)\text{ elsewhere.}
\end{equation*}
The rest of this section is devoted to the proof of the following proposition.
\begin{proposition}\label{prp: blow_up}
If there exists $0 < \alpha < \min(2s,1)$ such that
\[
    \sup_{\beta > 0} | \eta\bv_{\beta}|_{\C^{0,\alpha}\left(\overline{B^+}\right)} = + \infty
\]
then for a suitable choice of $\{r_\beta\}_\beta\subset\R^+$ and $\{x'_\beta\}_\beta \subset \R^N$, the blow-up family
\[
    w_{i,\beta}(X) := \eta(x'_\beta,0) \frac{v_{i,\beta}((x'_\beta,0) + r_\beta X)}{ r_\beta^{\alpha} | \eta\bv_{\beta}|_{\C^{0,\alpha}\left(\overline{B^+}\right)}}
\]
admits a convergent subsequence in the local uniform topology. Moreover the limit $\bw \in (H^{1;a}_{\loc}\cap \C^{0,\alpha})\left(\overline{\R^{N+1}_+}\right)$ enjoys the following properties:
\begin{enumerate}
    \item each $w_i$ is a $L_a$-harmonic function of $\R^{N+1}_+$;
    \item at least one component of $\bw$ is non constant, and it attains its maximal H\"older quotient of exponent $\alpha$ at a pair of points in the half-ball $\overline{B_1^+ }$;
    \item either there exists $M > 0$ such that
    \[
        \partial^a_\nu w_i = - M w_i^p \sum_{j \neq i} a_{ij} w_j^p \quad \text{on $\R^{N}$}
    \]
    or $w_i w_j |_{y = 0} = 0$ for every $j \neq i$ and
    \[
        \partial_{\nu}^a w_i \leq 0, \qquad  w_i \partial_{\nu}^a \left(w_i - \sum_{j\neq i} \frac{a_{ij}}{a_{ji}} w_j\right) = 0 \quad \text{on $\R^{N}$}.
    \]
\end{enumerate}
\end{proposition}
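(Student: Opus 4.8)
The plan is to run the standard blow-up machinery, being careful that the normalization constants in the definition of $w_{i,\beta}$ produce a well-behaved limit. First I would use the failure of the uniform H\"older bound: since $\sup_{\beta}|\eta\bv_\beta|_{\C^{0,\alpha}(\overline{B^+})}=+\infty$, there is a sequence $\beta_n\to\infty$ (it must be $\beta_n\to\infty$, since for bounded $\beta$ Lemma \ref{lem: sire_dupaigne_s} gives a uniform bound) along which $L_n:=|\eta\bv_{\beta_n}|_{\C^{0,\alpha}(\overline{B^+})}\to+\infty$, and pairs of points realizing (up to a factor $2$) this H\"older quotient for some component; call them $(x'_n,0)+r_nX_n$ and $(x'_n,0)+r_nY_n$ after an appropriate choice of centers $x'_n$ and scales $r_n\to0$, arranged so that $x'_n$ lies in the region where $\eta\equiv1$ or at least $\eta$ is bounded below, and $X_n\in\overline{B_1^+}$, $|X_n-Y_n|$ bounded below. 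The scaling in the denominator, $r_n^\alpha L_n$, is chosen precisely so that the rescaled functions $w_{i,\beta_n}$ have H\"older quotient of exponent $\alpha$ bounded by a universal constant on every compact set, and equal to essentially $1$ between the two marked points. This gives equicontinuity and local uniform boundedness, hence (Ascoli–Arzel\`a) local uniform convergence of a subsequence to some $\bw\in\C^{0,\alpha}_{\loc}(\overline{\R^{N+1}_+})$, and property (2) is immediate by passing the two marked points to the limit. The $H^{1;a}_{\loc}$ convergence follows from a Caccioppoli-type estimate using the equation, exactly as in \cite{tvz1,tvz2}.

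Next, for property (1), I would rescale the equation $L_a v_{i,\beta_n}=0$ in $B_1^+$: since $L_a$ is invariant under the parabolic-type scaling $v\mapsto v((x'_n,0)+r_nX)$ (it only multiplies the equation by a power of $r_n$), each $w_{i,\beta_n}$ is $L_a$-harmonic on larger and larger half-balls, and the local uniform (plus $H^{1;a}$) convergence passes this to the limit: $L_aw_i=0$ in $\R^{N+1}_+$.

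The dichotomy in property (3) is the main point. Write the rescaled Steklov condition: on $\partial^0$ of the relevant ball,
\[
\partial_\nu^a w_{i,\beta_n}=\frac{r_n^{1-a-\alpha}}{\eta(x'_n,0)^{?}L_n}\Big(f_{i,\beta_n}-\beta_n\,(\text{product term})\Big),
\]
and after inserting $v_{i,\beta_n}=\tfrac{r_n^\alpha L_n}{\eta(x'_n,0)}w_{i,\beta_n}$ the competition term becomes $M_n\,w_{i,\beta_n}^p\sum_{j\neq i}a_{ij}w_{j,\beta_n}^p$ with
\[
M_n:=\beta_n\,r_n^{\,2s\,}\Big(\tfrac{r_n^\alpha L_n}{\eta(x'_n,0)}\Big)^{2p-1}\cdot(\text{harmless factors}),
\]
while the reaction contribution $\dfrac{r_n^{2s}}{r_n^\alpha L_n}f_{i,\beta_n}\to0$ because $f_{i,\beta_n}$ is uniformly bounded on bounded sets, $r_n\to0$, $\alpha<2s$ and $L_n\to\infty$. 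Now pass to a further subsequence so that $M_n\to M\in[0,+\infty]$. If $M\in(0,\infty)$ we get the first alternative $\partial_\nu^aw_i=-Mw_i^p\sum_{j\neq i}a_{ij}w_j^p$ directly, using that the product term is nonnegative and that one may pass to the limit in the weak (testing) formulation of the Steklov condition (local uniform convergence of the $w_j$ handles the nonlinearity). If $M=0$ the competition term drops out and $\partial_\nu^aw_i=0$, which is the second alternative with the segregation automatically satisfied only if we also rule this out — so in fact one shows $M=0$ is incompatible with property (2): a globally $L_a$-harmonic function with $\partial_\nu^aw_i=0$ extends evenly to an entire $L_a$-harmonic function, hence by Lemma \ref{lem: first collection_s} is a polynomial of degree $\geq1$, contradicting the $\C^{0,\alpha}$ bound with $\alpha<1$; thus $M\neq0$. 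The genuinely delicate case is $M=+\infty$. Here one argues that for the limit to stay bounded (it does, being $\C^{0,\alpha}_{\loc}$) the product $w_i^p\sum_{j\neq i}a_{ij}w_j^p$ must vanish on $\{y=0\}$, i.e. $w_iw_j|_{y=0}=0$ for $j\neq i$: this is the standard consequence of multiplying the equation by a test function and letting $M_n\to\infty$ while the left side stays bounded, exactly as in the Gross–Pitaevskii case. Then $\partial_\nu^aw_i\leq0$ follows from the sign of the competition term (it is $\leq f_{i,\beta_n}/(\cdots)$, whose limit is $0$, minus a nonnegative term). The remaining identity $w_i\,\partial_\nu^a\big(w_i-\sum_{j\neq i}\tfrac{a_{ij}}{a_{ji}}w_j\big)=0$ is the crucial new ingredient specific to Lotka–Volterra: it comes from taking the linear combination $a_{ji}(\text{equation for }i)-a_{ij}(\text{equation for }j)$ before the limit — the $\beta_n$-terms are $\beta_n a_{ji}a_{ij}(w_i^pw_j^p-w_j^pw_i^p)=0$ and cancel exactly (here $p=q$ is used), leaving a $\beta$-free relation $\partial_\nu^a(a_{ji}w_{i,\beta_n}-a_{ij}w_{j,\beta_n})=a_{ji}f_{i,\beta_n}-a_{ij}f_{j,\beta_n}\to0$ which passes to the limit on the set $\{w_i>0\}$ (where, by segregation, all $w_j=0$ for $j\neq i$), giving $\partial_\nu^a w_i=0$ there and hence the stated product identity after summing the scaled versions.

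The main obstacle I anticipate is the bookkeeping around the normalization: one must choose $r_n$, $x'_n$ and pass to subsequences so that (a) the marked points stay in $\overline{B_1^+}$, (b) $M_n$ has a limit in $[0,+\infty]$, and (c) in the case $M_n\to\infty$ the segregation is actually forced rather than merely consistent — ruling out that the $w_j$ all become constant and non-interacting in a way that hides a nonzero limiting product. This is handled by the argument above excluding $M=0$ together with a clean-up/Liouville input (Lemma \ref{lem: first collection_s} and, for the second alternative's finer structure, the observations in Section 2), but it is where the ``some of the steps actually fail'' caveat of the introduction bites: unlike in \cite{tvz1,tvz2}, one cannot assume a priori that $M<\infty$, and the surviving cancellation $p=q$ is what replaces the variational structure that is absent here.
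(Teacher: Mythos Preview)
Your sketch follows the right overall arc, but it has a genuine gap at the step you pass over most quickly. You write: ``This gives equicontinuity and local uniform boundedness, hence (Ascoli--Arzel\`a)\dots''. The uniform bound on the $\C^{0,\alpha}$ \emph{seminorm} of $\bar\bw_n$ gives equicontinuity and uniformly bounded \emph{oscillation} on compacts, but it does \emph{not} give local uniform boundedness: nothing you have written rules out $w_{i,n}(0)\to+\infty$ (equivalently $L_n r_n^\alpha\to 0$, which is perfectly compatible with $L_n\to\infty$, $r_n\to0$). In the paper this is exactly the hard part: Lemmas~\ref{lem: trip estimate}, \ref{lem: bound in Mn not infinitesimal local_s} and \ref{lem: w(0) bdd_s} are devoted to showing that $\{\bar\bw_n(0)\}_n$ is bounded. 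The argument is not routine --- it uses an energy-type estimate to control $M_n\int w_{i,n}^{p+1}w_{j,n}^p$ in terms of $|w_{i,n}(0)|$, then splits according to whether $\bar w_{i,n}(0)$ diverges, using the decay lemma (Lemma~\ref{lem: decay with perturbations_s}) and the Liouville results to reach a contradiction in each case. This is precisely where the paper's remark that ``some of the steps\dots\ actually fail'' compared to \cite{tvz1,tvz2} applies; your proposal does not address it.

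A second, smaller gap: in the case $M_n\to\infty$ with $k\ge 3$, your assertion that in the linear combination ``the $\beta_n$-terms\dots\ cancel exactly'' is not correct. Subtracting the equation for $j$ from that for $i$ (after scaling by $a_{ij}/a_{ji}$) kills the mutual term $w_{i,n}^p w_{j,n}^p$, but leaves the triple products $M_n w_{j,n}^p w_{h,n}^p$ for $h\neq i,j$; summing over $j\neq i$ and multiplying by $w_{i,n}$ then produces terms $M_n w_{i,n} w_{j,n}^p w_{h,n}^p$ that do not cancel. In the paper (Lemma~\ref{lem: Mn to infty}) these are shown to vanish in the limit via the decay estimate of Lemma~\ref{lem: decay with perturbations_s} on the set where $w_i\ge\eps$, combined with \eqref{eqn: bounded competition} on its complement. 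Your heuristic ``on $\{w_i>0\}$ all $w_j=0$, hence $\partial_\nu^a w_i=0$ there'' is morally right but requires exactly this quantitative input: $w_{j,n}\to0$ alone is not enough, you need $M_n w_{j,n}^p$ bounded (indeed vanishing) where $w_i$ is bounded below.
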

The proof is divided in several steps. First, we choose any subsequence $\bv_n:=\bv_{\beta_n}$ such that
\[
    \sup_{n \in \N} | \eta\bv_{n}|_{\C^{0,\alpha}\left(\overline{B^+}\right)} =: \sup_{n \in \N}  L_n  = + \infty,
\]
where by Lemma \ref{lem: sire_dupaigne_s} both $\beta_n\to\infty$ and the H\"older quotients $L_n$ are achieved,
say
\[
\begin{split}
    L_n := &\max_{i = 1, \dots, k} \max_{X'\neq X'' \in \overline{B^+}} \frac{|(\eta v_{i,n})(X')-(\eta v_{i,n})(X'')|}{|X'-X''|^{\alpha}}\\
    = &\frac{|(\eta v_{1,n})(X'_n)-(\eta v_{1,n})(X''_n)|}{r_n^{\alpha}},
\end{split}
\]
where we have written $r_n := |X'_n-X''_n|$. Finally, we are in a position to define the two blow-up sequences we will work with as
\[
    w_{i,n}(X) := \eta(x'_n,0) \frac{v_{i,n}({(x'_n,0)} + r_n X)}{L_n r_n^{\alpha}} \quad \text{and} \quad \bar{w}_{i,n}(X) := \frac{(\eta v_{i,n})({(x'_n,0)} + r_n X)}{L_n r_n^{\alpha}},
\]
both defined on the domain
\[
\tau_n B^+ := \frac{B^+ - {(x'_n,0)}}{r_n}.
\]
Accordingly, the corresponding reaction terms can be expressed as
    \[
        f_{i,n}(x,t_1, \dots, t_k) = r_n^{2s}  \frac{\eta({x'_n,0})}{ L_n r_n^{\alpha}} f_{i,\beta_n}\left(X'_n + r_n x,  t_1 \frac{L_n r_n^{\alpha}}{\eta({x'_n,0})}, \dots,  t_k \frac{L_n r_n^{\alpha}}{\eta({x'_n,0})}\right).
    \]
In \cite[Section 6]{tvz1} and \cite[Section 4]{tvz2} we have analyzed in detail the behavior of the two blow-up sequences in the different case of variational competition. In the following lemma we collect the initial remarks about such sequences, the proof of which is independent of the type of competition. In particular, we have that the domains exhaust the whole $\R^{N+1}_+$, and that the two sequences $\{\bw_{n}\}_n$ and $\{\bar \bw_{n}\}_n$ -- of which the former satisfies an equation and the latter has uniformly bounded H\"older quotient -- are close on any compact.
\begin{lemma}\label{lem: mega lemma_s}
As $n \to \infty$ the following assertions hold
\begin{enumerate}
    \item $r_n \to 0$, $\|f_{i,n}\|_\infty\to0$,  $\tau_n B^+ \to \R^{N+1}_+$ and $\tau_n \partial^0 B^+ \to \R^{N} \times \{0\}$;
    \item the sequence $\{\bw_{n}\}_n$ satisfies
\begin{equation}\label{eqn: w_sol_s}
    \begin{cases}
    L_a w_{i,n} = 0 & \text{ in }\tau_nB^{+}\\
    \partial_{\nu}^{a} w_{i,n} = f_{i,n}(x,w_{1,n}, \dots, w_{k,n}) - M_n w_{i,n}^p \tsum_{j \neq i}a_{ij} w_{j,n}^p & \text{ on } \tau_n \partial^0 B^+,
    \end{cases}
\end{equation}
    where
    \[
        M_n = \beta_n r_n^{2s} \left(\frac{\eta({x'_n,0})}{L_n r_n^{\alpha}}\right)^{1-2p};
    \]
    \item\label{itm: equihold} the sequence $\{\bar \bw_n\}_n$ has uniformly bounded $\C^{0,\alpha}$-seminorm, the oscillation of the first component in $B_1^+$ being always $1$;
    \item\label{itm: close_seqs} for any compact $K\subset\R^{N+1}$,
    \[
    \max_{X \in K\cap\overline{\tau_n B^+}} | \bw_{n}(X)- \bar{\bw}_{n}(X)| \to 0
    \]
    (and therefore also $\bw_{n}$ has uniformly bounded oscillation on $K$).
\end{enumerate}
\end{lemma}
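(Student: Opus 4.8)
My approach is to verify the four assertions essentially in the order stated, since they build on one another, and the only genuinely delicate point is item (\ref{itm: close_seqs}). First, for item (1), I would argue that $r_n\to 0$ by contradiction: if $r_n$ stayed bounded away from zero along a subsequence, then the blow-up normalization $L_n r_n^\alpha$ would be comparable to $L_n\to+\infty$, while the numerator $|(\eta v_{1,n})(X'_n)-(\eta v_{1,n})(X''_n)|$ is bounded by $2\bar m\sup\eta$ since the $v_{i,n}$ are uniformly bounded in $L^\infty$; this forces $L_n$ bounded, a contradiction. Once $r_n\to0$, the statement $\tau_n B^+\to\R^{N+1}_+$ and $\tau_n\partial^0 B^+\to\R^N\times\{0\}$ is a direct consequence: for any fixed $X$, eventually $(x'_n,0)+r_n X\in B^+$, because $(x'_n,0)$ ranges in the compact set $\overline{B^0_1}$ where $\eta$ is defined. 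For $\|f_{i,n}\|_\infty\to0$, I would use the explicit formula for $f_{i,n}$: the prefactor is $r_n^{2s}\eta(x'_n,0)/(L_n r_n^\alpha)=r_n^{2s-\alpha}\eta(x'_n,0)/L_n$; since $\alpha<2s$ we have $r_n^{2s-\alpha}\to0$, and the argument $t_k L_n r_n^\alpha/\eta(x'_n,0)$, when $t_k$ ranges over the (bounded) values taken by $w_{k,n}$, corresponds to $v_{k,n}$ evaluated inside $B^+$, hence bounded; so by the uniform boundedness hypothesis on $f_{i,\beta}$ the whole quantity tends to $0$ (one must be slightly careful that $w_{k,n}$ is only locally bounded, so this is really a statement on compact sets, which is all that is needed).

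Item (2) is immediate: one substitutes $v_{i,n}(x) = L_n r_n^\alpha w_{i,n}((x-(x'_n,0))/r_n)/\eta(x'_n,0)$ into problem $\problem{\beta_n}$ and uses the scaling $L_a(v(\lambda\,\cdot))=\lambda^2 (L_a v)(\lambda\,\cdot)$ together with $\partial^a_\nu(v(\lambda\,\cdot))=\lambda^{1-a}(\partial^a_\nu v)(\lambda\,\cdot)$, noting $1-a=2s$; the competition term picks up the factor $(L_n r_n^\alpha/\eta(x'_n,0))^{2p-1}$ beyond the $r_n^{2s}\beta_n$ coming from the normal derivative rescaling, which gives exactly the stated $M_n$. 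For item (3), by definition $L_n$ is the maximal $\C^{0,\alpha}$-seminorm of the $\eta v_{i,n}$ over $\overline{B^+}$, so each $\bar w_{i,n}$, being $(\eta v_{i,n})$ rescaled and divided by $L_n r_n^\alpha$, has $\C^{0,\alpha}$-seminorm at most $1$ on $\tau_n\overline{B^+}$ — indeed the seminorm is scale-covariant in precisely the way that cancels the $r_n^\alpha$. The normalization $r_n=|X'_n-X''_n|$ means the first component realizes seminorm exactly $1$ between the two points $X=0$ and $X=(X''_n-X'_n)/r_n$, which lie on the unit sphere; hence its oscillation on $B_1^+$ is $1$.

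The main obstacle is item (\ref{itm: close_seqs}), which compares $\bw_n$ (solving an equation, but a priori with uncontrolled oscillation) to $\bar\bw_n$ (equibounded and equi-Hölder, but not solving a clean equation). The difference on $\tau_n B^+$ is
\[
\bar w_{i,n}(X)-w_{i,n}(X) = \frac{\bigl(\eta((x'_n,0)+r_nX)-\eta(x'_n,0)\bigr)\,v_{i,n}((x'_n,0)+r_nX)}{L_n r_n^\alpha}.
\]
On a fixed compact $K$, the numerator is bounded by $\|\nabla\eta\|_\infty\, r_n|X|\cdot \bar m$, so the quotient is $O(r_n^{1-\alpha}|X|/L_n)\to0$ since $\alpha<1$ and $L_n\to\infty$; thus the two sequences are uniformly close on compacts, and the equi-Hölder bound for $\bar\bw_n$ transfers to $\bw_n$, giving uniform oscillation control. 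I would take care to state the bound only on compacts and to note that $K\cap\overline{\tau_n B^+}$ may be empty for small $n$ but eventually exhausts $K\cap\overline{\R^{N+1}_+}$ by item (1); the verification that $v_{i,n}$ is evaluated only inside $B^+$ (so that the $L^\infty$ bound applies) again uses $r_n\to0$ and the compactness of the admissible centers $(x'_n,0)$.
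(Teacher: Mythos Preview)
The paper does not give its own proof of this lemma; it simply states that the argument is independent of the competition term and refers to \cite[Section~6]{tvz1} and \cite[Section~4]{tvz2}. Your proposal follows that standard route and the substance is correct: the contradiction giving $r_n\to0$, the scaling computation for (2), the seminorm normalization in (\ref{itm: equihold}), and especially the estimate in (\ref{itm: close_seqs}) via $|\eta((x'_n,0)+r_nX)-\eta(x'_n,0)|\leq\|\nabla\eta\|_\infty\, r_n|X|$ together with $\|v_{i,n}\|_\infty\leq\bar m$, yielding a bound $O(r_n^{1-\alpha}|X|/L_n)\to0$.

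Two points deserve tightening. In (1), to obtain $\tau_nB^+\to\R^{N+1}_+$ you need $\dist((x'_n,0),\partial^+B^+)/r_n\to\infty$; saying that $(x'_n,0)$ lies in a compact set is not enough. The argument in the cited papers shows that the points $X'_n,X''_n$ realizing $L_n$ stay in a compact subset of $B_1$ (where $\eta$ is bounded below) and that $y'_n/r_n$ remains bounded, the case $y'_n/r_n\to\infty$ being ruled out directly by an interior Liouville theorem. In (\ref{itm: equihold}) you slip when locating the two extremal points: the blow-up is centered at $(x'_n,0)$, not at $X'_n=(x'_n,y'_n)$, so after rescaling the pair is $(0,y'_n/r_n)$ and $(X''_n-(x'_n,0))/r_n$, at mutual distance~$1$. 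Once $y'_n/r_n$ is known to be bounded, both lie in a fixed half-ball and the oscillation of $\bar w_{1,n}$ there is at least~$1$, which is what prevents the limit from being constant. These are routine refinements; the proof plan is otherwise sound.
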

In the next series of lemmas we are going to show that both sequences converge to the same blow-up limit.
To this end, we have to exclude the case in which the sequences are unbounded at the origin: indeed, the uniform boundedness of a sequence at some point is enough,
together with points \eqref{itm: equihold} and \eqref{itm: close_seqs} of the previous lemma, to conclude the convergence (uniform on compact sets) of the two sequences.

\begin{lemma}\label{lem: trip estimate}
For any $r > 0$ there exists a constant $C$ such that the estimate
\[
    M_n \int\limits_{\partial^0 B_r^+} \tsum_{j \neq i}a_{ij} w_{i,n}^{p+1} w_{j,n}^p \, \de{x} \leq C(r) (|w_{i,n}(0)| +1)
\]
holds uniformly in $n$.
\end{lemma}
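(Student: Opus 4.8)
The plan is to obtain the bound by testing the equation \eqref{eqn: w_sol_s} for $w_{i,n}$ against a suitable cutoff of $w_{i,n}$ itself, exploiting that $w_{i,n}$ is $L_a$-harmonic in the interior so that the only boundary contribution comes from the Steklov condition on $\tau_n\partial^0B^+$. Concretely, fix $r>0$ and choose a smooth cutoff $\varphi$ with $\varphi\equiv1$ on $B_r$, $\supp\varphi\subset B_{2r}$, $0\le\varphi\le1$. For $n$ large enough $B_{2r}^+\subset\tau_nB^+$ (by Lemma \ref{lem: mega lemma_s}(1)), so we may test the weak formulation of \eqref{eqn: w_sol_s} with $\varphi^2 w_{i,n}$. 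This yields
\[
\int_{B_{2r}^+}|y|^a\nabla w_{i,n}\cdot\nabla(\varphi^2 w_{i,n})\,\de x\de y
=\int_{\partial^0B_{2r}^+}\varphi^2 w_{i,n}\Bigl(f_{i,n}(x,\bw_n)-M_n w_{i,n}^p\tsum_{j\neq i}a_{ij}w_{j,n}^p\Bigr)\,\de x.
\]
Expanding the left side and using the Cauchy--Schwarz/Young inequality to absorb the $\varphi\nabla\varphi\, w_{i,n}\nabla w_{i,n}$ cross term, the left side is bounded below by $-C\int_{B_{2r}^+}|y|^a|\nabla\varphi|^2 w_{i,n}^2$. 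Moving the competition term to the left, we get
\[
M_n\int_{\partial^0B_{r}^+}w_{i,n}^{p+1}\tsum_{j\neq i}a_{ij}w_{j,n}^p\,\de x
\le C\int_{B_{2r}^+}|y|^a|\nabla\varphi|^2 w_{i,n}^2\,\de x\de y+\int_{\partial^0B_{2r}^+}\varphi^2 w_{i,n} f_{i,n}(x,\bw_n)\,\de x,
\]
using that $w_{i,n}\ge0$ and each $a_{ij}>0$ so the portion of the competition integral over $B_{2r}^+\setminus B_r^+$ is nonnegative and may be dropped.

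It remains to bound the right-hand side by $C(r)(|w_{i,n}(0)|+1)$. For this I would invoke the uniform control on oscillations from Lemma \ref{lem: mega lemma_s}(3)--(4): on the compact $B_{2r}^+$ we have $\osc_{B_{2r}^+} w_{i,n}\le C(r)$, hence $0\le w_{i,n}\le w_{i,n}(0)+C(r)$ pointwise on $B_{2r}^+$ (recall $w_{i,n}\ge0$, so the sup is controlled by the value at $0$ plus the oscillation). Thus on $\partial^0B_{2r}^+$ we have $w_{i,n}\le C(r)(|w_{i,n}(0)|+1)$, and since $\|f_{i,n}\|_\infty\to0$ (Lemma \ref{lem: mega lemma_s}(1)) the boundary reaction term is bounded by $C(r)(|w_{i,n}(0)|+1)$. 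For the interior gradient term, I would either use a Caccioppoli inequality for the $L_a$-harmonic function $w_{i,n}$ on $B_{2r}^+$ to replace $\int|y|^a|\nabla w_{i,n}|^2$ (if such a term appeared; here only the undifferentiated $w_{i,n}^2$ appears against $|\nabla\varphi|^2$, which is already fine) — in our expansion only $\int_{B_{2r}^+}|y|^a|\nabla\varphi|^2 w_{i,n}^2\le C(r)\,(|w_{i,n}(0)|+1)^2$ remains, and this is quadratic rather than linear in $|w_{i,n}(0)|$.

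This last point is the main obstacle: the naive testing gives a quadratic dependence $(|w_{i,n}(0)|+1)^2$, not the linear bound claimed. To get linearity one must test more carefully — instead of $\varphi^2 w_{i,n}$, test with $\varphi^2$ (a function that does not vanish on the boundary but whose gradient is supported in the annulus), or equivalently integrate the equation $\partial_\nu^a w_{i,n}$ directly: $\int_{\partial^0B_{2r}^+}\varphi^2\,\partial_\nu^a w_{i,n}\,\de x=-\int_{B_{2r}^+}|y|^a\nabla w_{i,n}\cdot\nabla(\varphi^2)\,\de x$. The right-hand side is then $\le C(r)\bigl(\int_{B_{2r}^+}|y|^a|\nabla w_{i,n}|^2\bigr)^{1/2}$ times a weighted measure of the annulus; combining with a Caccioppoli estimate $\int_{B_{2r}^+}|y|^a|\nabla w_{i,n}|^2\le C(r)\osc_{B_{3r}^+}^2 w_{i,n}$ still gives something $\le C(r)$, independent of $w_{i,n}(0)$ — even better than linear, because gradients only see oscillations. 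Then from $\partial_\nu^a w_{i,n}=f_{i,n}-M_nw_{i,n}^p\sum_{j\neq i}a_{ij}w_{j,n}^p$ we get
\[
M_n\int_{\partial^0B_{r}^+}w_{i,n}^p\tsum_{j\neq i}a_{ij}w_{j,n}^p\,\de x\le C(r)\bigl(1+\|f_{i,n}\|_\infty\bigr)\le C(r),
\]
and multiplying the integrand by the extra factor $w_{i,n}\le C(r)(|w_{i,n}(0)|+1)$ on $\partial^0B_r^+$ recovers exactly the stated estimate. So the real work is: (a) a Caccioppoli inequality for $L_a$-harmonic functions in the weighted space, and (b) the elementary observation that the missing power of $w_{i,n}$ in passing from $w_{i,n}^p w_{j,n}^p$ to $w_{i,n}^{p+1}w_{j,n}^p$ is precisely what produces the factor $|w_{i,n}(0)|+1$ via the oscillation bound. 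I expect step (a) to be standard (it follows from testing $L_aw_{i,n}=0$ with $\varphi^2 w_{i,n}$ and Young's inequality, using the $A_2$-weight structure), and the only genuinely delicate bookkeeping is making sure all constants depend only on $r$ and the uniform oscillation bounds of Lemma \ref{lem: mega lemma_s}, not on $n$.
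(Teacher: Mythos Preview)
Your second route (test with $\varphi^2$, then multiply by $\sup w_{i,n}$) is correct and yields the lemma; in fact it proves slightly more, namely that $M_n\int_{\partial^0 B_r^+} w_{i,n}^p\sum_{j\neq i}a_{ij}w_{j,n}^p\le C(r)$ uniformly in $n$, from which the stated bound follows by $w_{i,n}\le |w_{i,n}(0)|+\osc$. Two small points need tightening. First, the Caccioppoli inequality you invoke is not quite ``standard'' here, because $w_{i,n}$ is not $L_a$-harmonic up to $\{y=0\}$: the Steklov boundary term intervenes. You should test with $\psi^2(w_{i,n}-c)$ where $c=\inf_{\overline{B_{3r}^+}}w_{i,n}$, so that $w_{i,n}-c\ge0$ on $\partial^0$; then the competition contribution on $\partial^0$ has a favorable sign and can be dropped, while the bulk term is controlled by $\osc^2$ rather than $w_{i,n}^2$. (Testing with $\psi^2 w_{i,n}$, as you write, would again give the quadratic dependence.) Second, your first attempt with $\varphi^2 w_{i,n}$ indeed only gives the quadratic bound, as you correctly diagnose.

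The paper argues differently. It tests the equation with $w_{i,n}$ on exact half-balls $B_r^+$ to obtain the identity $H'(r)=\frac{2}{r}E(r)$, where $H(r)=r^{-N-a}\int_{\partial^+B_r^+}y^a w_{i,n}^2$ and $E(r)$ is an energy containing the competition integral. Integrating over $r\in(R,2R)$ and writing
\[
w_{i,n}^2(2RX)-w_{i,n}^2(RX)=\bigl[w_{i,n}(2RX)-w_{i,n}(RX)\bigr]\bigl[w_{i,n}(2RX)+w_{i,n}(RX)\bigr]
\]
bounds the first factor by the oscillation and the second by $2|w_{i,n}(0)|+C(R)$, giving the linear dependence directly with no separate Caccioppoli step. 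Your route is more elementary and produces the sharper intermediate estimate as a bonus; the paper's route sits naturally in the Rellich/monotonicity-formula framework used elsewhere in the blow-up analysis.
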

\begin{proof}
Let us consider the quantities
\[
    \begin{split}
        E(r) &:= \frac{1}{r^{N+a-1} } \left( \int\limits_{B_r^+} y^{a}|\nabla w_{i,n}|^2 + \int\limits_{\partial^0 B_r^+}\left( -f_{i,n}w_{i,n} + M_n w_{i,n}^{p+1} \tsum_{j \neq i}a_{ij} w_{j,n}^p\right) \right)\\
        H(r) &:= \frac{1}{r^{N+a}} \int\limits_{\partial^+ B_r^+} y^a w_{i,n}^2,
    \end{split}
\]
where $H \in AC(R,2R)$, for any $R>0$ fixed and $n$ sufficiently large. If we test equation
\eqref{eqn: w_sol_s} by $w_{i,n}$ itself in the ball $B_r^+$, we obtain
\[
H'(r) = \frac{2}{r^{N+a}} \int\limits_{\partial^+ B_r^+} y^a w_{i,n} \partial_{\nu} w_{i,n}= \frac{2}{r} E(r),
\]
which can be integrated to infer
\begin{equation*}
    H(2R) - H(R) = \int\limits_{R}^{2R} \frac{2}{r} E(r)\,\de r.
\end{equation*}
On the one hand, the left hand side of of the previous identity can be estimated by recalling that $w_{i,n}$ has uniformly bounded oscillation on any compact set (Lemma \ref{lem: mega lemma_s},
\eqref{itm: close_seqs}):
\[
    \begin{split}
    H(2R) - H(R) &= \int\limits_{\partial^+ B^+}y^a \left[ w_{i,n}^2(2RX) - w_{i,n}^2(RX) \right] \de{\sigma} \\
    &= \int\limits_{\partial^+ B^+} y^a\left. w_{i,n} \right|^{2RX}_{RX} \left[ \left. w_{i,n} \right|^{2RX}_{0} + \left. w_{i,n} \right|^{RX}_{0} + 2w_{i,n}(0) \right] \de{\sigma}\\
    &\leq C(R)(|w_{i,n}(0)| +1).
    \end{split}
\]
On the other hand, we obtain a lower bound of the right hand side as
\[\begin{split}
    \int\limits_{r}^{2r} \frac{2}{s} E(s)  \mathrm{d}s &\geq \min_{s \in [r,2r]} E(s) \\
    &\geq \frac{1}{r^{N+a-1}} \left( \frac{M_n}{2^{N+a}} \int\limits_{\partial^0 B_r^+} \tsum_{j \neq i}a_{ij} w_{i,n}^{p+1} w_{j,n}^p \, \de{x} - \int\limits_{\partial^0 B_{2r}^+}  |f_{i,n}| w_{i,n} \, \de{x}  \right)\\
    &\geq C \left(M_n \int\limits_{\partial^0 B_r^+} \tsum_{j \neq i}a_{ij} w_{i,n}^{p+1} w_{j,n}^p \, \de{x} - \|f_{j,n}\|_{L^{\infty}} (|w_{i,n}(0)| +1)\right).\qedhere
\end{split}
\]
\end{proof}

\begin{lemma}\label{lem: bound in Mn not infinitesimal local_s}
If $\bar{w}_{i,n}(0) \to \infty$ for some $i$, then there exists $C$ such that
\[
    M_n \bar{w}_{i,n}^p(0) \leq C
\]
for a constant $C$ independent of $n$. In particular, $M_n\to0$.
\end{lemma}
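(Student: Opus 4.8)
The plan is to argue by contradiction: suppose that, along a subsequence (not relabelled), $M_n\bar w_{i,n}^p(0)\to+\infty$, and write $m_n:=\bar w_{i,n}(0)\to+\infty$. By the uniform $\C^{0,\alpha}$-seminorm bound on $\bar w_{i,n}$ and the $L^\infty_\loc$-closeness of $w_{i,n}$ and $\bar w_{i,n}$ (Lemma~\ref{lem: mega lemma_s}, \eqref{itm: equihold}--\eqref{itm: close_seqs}), for every fixed $R>0$ and $n$ large we have $w_{i,n}\ge\tfrac12 m_n$ on $\overline{B_R^+}$, $w_{i,n}(0)=m_n+o(1)$ and $\osc_{\overline{B_R^+}}w_{l,n}\le C(R)$ for each $l$, while $\|f_{l,n}\|_\infty\to0$. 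Fix $j\ne i$; keeping only the index-$i$ term in the competition sum of \eqref{eqn: w_sol_s} one gets
\[
L_aw_{j,n}=0\ \text{ in }B_R^+,\qquad \partial^a_\nu w_{j,n}\le\|f_{j,n}\|_\infty-\bigl(a_{ji}2^{-p}M_nm_n^p\bigr)w_{j,n}^p\ \text{ on }\partial^0B_R^+,
\]
so $w_{j,n}$ satisfies the hypotheses of Lemma~\ref{lem: decay with perturbations_s} (rescaled to $B_R^+$) with $M$ a positive multiple of the large constant $M_nm_n^p$. Bootstrapping the resulting decay against the uniform oscillation bound, through $\sup_{\partial^+B_R^+}w_{j,n}\le\sup_{\partial^0B_{R/2}^+}w_{j,n}+C(R)$, we obtain
\[
\sup_{\partial^0B_{R/2}^+}w_{j,n}\le\frac{C(R)}{(M_nm_n^p)^{1/p}}\longrightarrow0,\qquad\text{so in particular}\qquad M_nm_n^p\Bigl(\sup_{\partial^0B_{R/2}^+}w_{j,n}\Bigr)^p\le C(R).
\]
The second estimate is the key gain: plugged back into \eqref{eqn: w_sol_s} it shows that the competition term of \emph{every} equation is locally bounded, i.e.\ $M_nw_{l,n}^p\tsum_{m\ne l}a_{lm}w_{m,n}^p\le C(R)$ on $\partial^0B_R^+$ for each $l$ (for $l=i$ since $M_nm_n^p\tsum_{m\ne i}a_{im}w_{m,n}^p\le C$; for $l\ne i$ since the summand $m=i$ is $\le a_{li}2^pM_nm_n^p(\sup w_{l,n})^p\le C$), hence $|\partial^a_\nu w_{l,n}|\le C(R)$ on $\partial^0B_R^+$; consequently each $w_{j,n}$ ($j\ne i$), and also $w_{i,n}-m_n$, is locally bounded.

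Next I would pass to the limit. As the $\bar w_{j,n}$ ($j\ne i$) and the $\bar w_{i,n}-m_n$ have uniformly bounded $\C^{0,\alpha}$-seminorms and are locally bounded, up to a further subsequence $w_{j,n}\to w_j$ and $w_{i,n}-m_n\to\hat w_i$ locally uniformly on $\overline{\R^{N+1}_+}$, both $L_a$-harmonic in $\R^{N+1}_+$ and of growth $\le\alpha<\min(2s,1)$, with $w_j\equiv0$ on $\R^N\times\{0\}$ and $\hat w_i(0)=0$. The odd reflection of $w_j$ across $\{y=0\}$ is a global $L_a$-harmonic function of growth $<\min(2s,1)$ (the boundary flux contributions cancel by oddness), hence constant by Lemma~\ref{lem: first collection_s}, so $w_j\equiv0$. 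For $\hat w_i$ one must identify its boundary flux: since $w_{j,n}\to0$ locally uniformly and $L_aw_{j,n}=0$, a Caccioppoli inequality and weak compactness give $\partial^a_\nu w_{j,n}\to0$ in $\mathcal D'(\R^N)$; comparing with the $w_{j,n}$-equation in \eqref{eqn: w_sol_s} and using $\|f_{j,n}\|_\infty\to0$ forces $M_nw_{j,n}^pw_{i,n}^p\to0$ in $\mathcal D'(\R^N)$ for every $j\ne i$, whence
\[
\partial^a_\nu w_{i,n}=f_{i,n}-M_nw_{i,n}^p\tsum_{l\ne i}a_{il}w_{l,n}^p\longrightarrow0\quad\text{in }\mathcal D'(\R^N),
\]
i.e.\ $\partial^a_\nu\hat w_i\equiv0$. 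By Lemma~\ref{lem: prescribed constant normal derivative_s} (with $\lambda=0$) $\hat w_i$ is constant, and since $\hat w_i(0)=0$ we get $\hat w_i\equiv0$.

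This contradicts the normalization. By Lemma~\ref{lem: mega lemma_s}, \eqref{itm: equihold}--\eqref{itm: close_seqs}, $\osc_{B_1^+}w_{1,n}\to1$ (because $\osc_{B_1^+}\bar w_{1,n}=1$ and $\|w_{1,n}-\bar w_{1,n}\|_{L^\infty(\overline{B_1^+})}\to0$). On the other hand, if $i\ne1$ then $w_{1,n}\to w_1\equiv0$ locally uniformly, while if $i=1$ then $\osc_{B_1^+}w_{1,n}=\osc_{B_1^+}(w_{1,n}-m_n)\to\osc_{B_1^+}\hat w_1=0$; in either case $\osc_{B_1^+}w_{1,n}\to0$, a contradiction. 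This proves $M_n\bar w_{i,n}^p(0)\le C$; as $\bar w_{i,n}(0)\to\infty$ and $p>0$, it follows that $M_n\le C\,\bar w_{i,n}(0)^{-p}\to0$.

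The step I expect to be the main obstacle is the identification of the limiting Neumann datum $\partial^a_\nu\hat w_i$ in the second paragraph: the boundedness of the competition terms alone does not rule out that $\hat w_i$ carries a nonconstant boundary flux in the limit, and it is precisely the refined decay $M_nm_n^p(\sup_{\partial^0B_{R/2}^+}w_{j,n})^p\le C$ obtained in the first step, combined with the distributional vanishing $\partial^a_\nu w_{j,n}\to0$, that makes Lemma~\ref{lem: prescribed constant normal derivative_s} applicable.
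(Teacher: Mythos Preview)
Your argument is correct and follows the same overall contradiction scheme as the paper: assuming $M_n\bar w_{i,n}^p(0)\to\infty$, show that the other components vanish in the limit, then that $\partial_\nu^a$ of the (recentered) $i$-th component vanishes, and conclude by the Liouville result of Lemma~\ref{lem: prescribed constant normal derivative_s} against the unit oscillation of $\bar w_{1,n}$.

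There are two tactical differences worth noting. First, the paper opens with the energy identity of Lemma~\ref{lem: trip estimate} to obtain $I_{r,n}\int_{\partial^0B_r^+}\tsum_{j\neq i}a_{ij}w_{j,n}^p\le C$, deducing $w_{j,n}\to0$ in $L^p$ on the boundary and hence (via Liouville on the odd reflection) uniformly; only afterwards does it invoke Lemma~\ref{lem: decay with perturbations_s} to quantify the decay. You bypass Lemma~\ref{lem: trip estimate} entirely: a single application of the decay lemma, bootstrapped against the uniform oscillation bound $\sup_{\partial^+B_R^+}w_{j,n}\le\sup_{\partial^0B_{R/2}^+}w_{j,n}+C(R)$, simultaneously yields $\sup_{\partial^0B_{R/2}^+}w_{j,n}\to0$ and the key product bound $M_nm_n^p(\sup_{\partial^0B_{R/2}^+}w_{j,n})^p\le C(R)$. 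This is a genuine shortcut. Second, the paper establishes $\sup_{\partial^0B_r^+}|\partial_\nu^a w_{1,n}|\to0$ in $L^\infty_\loc$, using $I_{r,n}/I_{2r,n}\to1$ and the uniform convergence $w_{j,n}\to0$; you instead pass through the weaker, but sufficient, distributional statement $\partial_\nu^a w_{j,n}\to0$ in $\mathcal D'(\R^N)$ (from $w_{j,n}\to0$ in $H^{1;a}_\loc$ via Caccioppoli), and then read off $M_nw_{j,n}^pw_{i,n}^p\to0$ termwise by nonnegativity. Both routes lead to $\partial_\nu^a\hat w_i\equiv0$ and the same contradiction; your version trades a sharper pointwise bound for a slightly leaner argument.
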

\begin{proof}
Reasoning by contradiction we assume that $M_n \bar {w}_{i,n}^p(0) \to \infty$,
at least for a subsequence. For any $r > 0$ fixed, Lemma \ref{lem: mega lemma_s} forces
\[
    I_{r,n} := \inf_{\partial^0B_r^+} M_n w_{i,n}^p \to \infty.
\]
From Lemma \ref{lem: trip estimate}, we directly obtain
\begin{equation}\label{eqn: trip estimate}
    M_n \inf_{\partial^0 B^+_r}w_{i,n}^{p+1} \int_{\partial^0B_r^+}\tsum_{j \neq i}a_{ij} w_{j,n}^p \de x \leq C(r) (|w_{i,n}(0)| +1)
\end{equation}
that is, since $w_{i,n}(0)/w_{i,n}(x) \to 1$ uniformly in compact sets,
\[
    I_{r,n} \int_{\partial^0B_r^+}\tsum_{j \neq i}a_{ij} w_{j,n}^p \de x  \leq C.
\]
Let $j\neq i$. Since $I_{r,n} \to \infty$, we deduce that  $w_{j,n}\to0$ in $L^p(\partial^0B_r^+)$, for every $r$. Therefore Lemma \ref{lem: mega lemma_s} implies that both $\{\bar w_{j,n}\}_n$ and $\{w_{j,n}\}_n$ converge, uniformly on compact sets, to an $L_a$-harmonic function $w_{j,\infty} \in \C^{0,\alpha}(\R^{n+1}_+)$ such that
\[
    w_{j,\infty}(x,0) = 0\qquad\text{on }\R^N.
\]
The Liouville result in Lemma \ref{lem: first collection_s} applies to the odd extension of $w_{j,\infty}$
across $\{y=0\}$, yielding
\[
    w_{j,\infty} \equiv 0 \qquad \text{for $j \neq i$}.
\]
In particular, by uniform convergence, the unitary H\"older quotient is not achieved by any of the functions $\bar w_{j,n}$ for $j \neq i$ and $n$ large enough: it follows that we must have $i = 1$.

Now, let us recall that each $w_{j,n}$ with $j \neq 1$ satisfies the inequality
\begin{equation}\label{eqn: inequality for decay}
    \begin{cases}
    L_a w_{j,n} = 0 &\text{in $B_{2r}^+$}\\
    \partial_{\nu}^a w_{j,n} \leq \|f_{j,n}\|_{L^{\infty}(B_{2r})} - a_{ji}I_{2r,n} w_{j,n}^p &\text{in $\partial^0 B_{2r}^+$}
    \end{cases}
\end{equation}
so that by Lemma \ref{lem: decay with perturbations_s} we have the estimate
\[
    \sup_{\partial^0 B_r^+} w_{j,n}^p \leq \frac{C(r)}{ I_{2r,n}} \sup_{\partial^+B^+_{2r}} w_{j,n}^p.
\]
On the other hand, the function $w_{1,n}$ satisfies a boundary condition that can be estimated as
\[
\begin{split}
    \sup_{\partial^0 B_{r}^+} |\partial_{\nu}^a w_{1,n}| &\leq \|f_{1,n}\|_{L^{\infty}(B_{2r})} + I_{r,n} \tsum_{i \neq 1} a_{ij}\sup_{\partial^0 B_{r}^+} w_{j,n}^p \\
    &\leq \|f_{1,n}\|_{L^{\infty}(B_{2r})} + C(r) \frac{I_{r,n}}{I_{2r,n}} \tsum_{i \neq 1} \sup_{\partial^+B^+_{2r}} w_{j,n}^p \to 0,
\end{split}
\]
where we used the fact that
\[
\inf_{\partial^0B_{2r}^+} \bar w_{i,n} \leq \inf_{\partial^0B_{r}^+} \bar w_{i,n}
\leq \inf_{\partial^0B_{2r}^+} \bar w_{i,n} + Cr^\alpha
\quad \implies \quad
\lim_{n \to \infty} \frac{I_{r,n}}{I_{2r,n}} =  1.
\]
Let us now introduce the sequences
\[
    W_{1, n}(x,y) := w_{1,n}(x,y) - w_{1,n}(0,0), \qquad \bar W_{1, n}(x,y) := \bar w_{1,n}(x,y) - \bar w_{1,n}(0,0).
\]
As before, we can use Lemma \ref{lem: mega lemma_s} to prove that both sequences converge to the same $L_a$-harmonic
function, which is globally H\"older continuous, non constant, and which has trivial conormal derivative on $\R^N$,
in contradiction with Lemma \ref{lem: prescribed constant normal derivative_s}.
\end{proof}
\begin{lemma}\label{lem: w(0) bdd_s}
The sequence  $\{\bar{\bw}_{n}(0)\}_{n \in \N}$ is bounded.
\end{lemma}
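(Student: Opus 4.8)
The plan is to argue by contradiction. Suppose that, along a subsequence, $\bar w_{i,n}(0)\to\infty$ for some index $i$. I would first fix an index $\iota$ realizing $\max_j\bar w_{j,n}(0)$ for every $n$ (up to a further subsequence), set $\sigma_n:=\bar w_{\iota,n}(0)\to\infty$, and record that Lemma~\ref{lem: bound in Mn not infinitesimal local_s} applied to $i=\iota$ already yields $M_n\to0$ and $M_n\sigma_n^p\le C$. Writing $\mathcal{U}$ for the set of indices $j$ with $\bar w_{j,n}(0)\to\infty$, and using $w_{j,n}(0)=\bar w_{j,n}(0)+o(1)$ together with the uniform local bounds on the oscillation and H\"older quotient of $w_{j,n}$ (Lemma~\ref{lem: mega lemma_s}), one has for every $j\in\mathcal{U}$
\[
    w_{j,n}^p=\bar w_{j,n}(0)^p\,(1+o(1))\qquad\text{uniformly on compact subsets of }\overline{\R^{N+1}_+},
\]
so $M_nw_{j,n}^p$ is bounded on compacts. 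Combining this with Lemma~\ref{lem: trip estimate}, applied to each component with diverging trace, to bound the crossed products $M_n\bar w_{i,n}(0)^p\bar w_{j,n}(0)^p$, and with $\bar w_{j,n}(0)\le\sigma_n$, I would then check that all the competition terms $N_{i,n}:=M_nw_{i,n}^p\sum_{j\neq i}a_{ij}w_{j,n}^p$ are bounded on compacts, uniformly in $n$, and distinguish two cases according to the behaviour of $M_n\sigma_n^p$.

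Suppose first that $\limsup_nM_n\sigma_n^p>0$, and pass to a subsequence with $M_n\sigma_n^p\ge c_0>0$. Since $w_{\iota,n}\ge\sigma_n/2$ on $\partial^0B_r^+$ for $n$ large, one has $M_nw_{\iota,n}^p\ge c_0'>0$ there, whence every $j\neq\iota$ satisfies $\partial^a_\nu w_{j,n}\le f_{j,n}-a_{j\iota}c_0'w_{j,n}^p$ on $\partial^0B_r^+$. A rescaled version of Lemma~\ref{lem: decay with perturbations_s} on balls of fixed, large radius then forces $w_{j,n}(0)$ to stay bounded, so $\mathcal{U}=\{\iota\}$; passing to the limit in the last inequality and invoking Lemma~\ref{lem: global eigenfunction_s} shows that the locally uniform limit of $w_{j,n}$ vanishes for every $j\neq\iota$. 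Consequently the oscillation of $w_{j,n}$ over $B_1^+$ tends to $0$ for $j\neq\iota$, which by Lemma~\ref{lem: mega lemma_s},~\eqref{itm: equihold} forces $\iota=1$; and $W_{1,n}:=w_{1,n}-w_{1,n}(0)$ then converges to a non-constant, globally $\C^{0,\alpha}$, $L_a$-harmonic function with zero conormal derivative (because $M_nw_{1,n}^p$ is bounded on compacts while $w_{j,n}\to0$ for $j\neq1$), in contradiction with Lemma~\ref{lem: prescribed constant normal derivative_s}.

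Suppose now that $M_n\sigma_n^p\to0$. Then, using $\bar w_{j,n}(0)\le\sigma_n$ for $j\in\mathcal{U}$ and $M_n\to0$ for the remaining indices, $M_nw_{i,n}^p\to0$ uniformly on compacts for every $i$. If $1\notin\mathcal{U}$, then $w_{1,n}$ converges to a non-constant, bounded, $\C^{0,\alpha}$, $L_a$-harmonic function whose conormal derivative is $0$, since each $M_nw_{1,n}^pw_{j,n}^p=w_{1,n}^p\cdot M_nw_{j,n}^p\to0$ on compacts; Lemma~\ref{lem: prescribed constant normal derivative_s} again gives a contradiction. If $1\in\mathcal{U}$, then $W_{1,n}:=w_{1,n}-w_{1,n}(0)$ converges to a non-constant, globally $\C^{0,\alpha}$, $L_a$-harmonic $W_{1,\infty}$, and the crucial point is that, since $w_{j,n}^p=\bar w_{j,n}(0)^p(1+o(1))$ on compacts for $j\in\mathcal{U}$ while $M_nw_{1,n}^p\to0$ annihilates the contributions of the indices $j\notin\mathcal{U}$,
\[
    N_{1,n}\ \longrightarrow\ \mu_1:=\sum_{j\in\mathcal{U},\,j\neq1}a_{1j}\lim_nM_n\bar w_{1,n}(0)^p\bar w_{j,n}(0)^p\qquad\text{uniformly on compacts,}
\]
the limits being finite by Lemma~\ref{lem: trip estimate} applied to the first component, and $\mu_1$ being \emph{constant} in $x$. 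Hence $\partial^a_\nu W_{1,\infty}\equiv-\mu_1$ is constant, so Lemma~\ref{lem: prescribed constant normal derivative_s} forces $W_{1,\infty}$ to be constant, a contradiction.

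The compactness passages used above --- locally uniform and weak $H^{1;a}_{\loc}$ convergence of the sequences $w_{i,n}$, resp. $W_{i,n}$, and stability of the equations and boundary inequalities under passage to the limit --- can be carried out exactly as in Lemma~\ref{lem: mega lemma_s} and in \cite{tvz1,tvz2}. I expect the genuine obstacle to be the scenario in which several components blow up simultaneously at the origin, which is not ruled out by the monotonicity-type inequality of Lemma~\ref{lem: trip estimate} alone; the resolution is the observation that a component with diverging value at $0$ contributes to the competition term essentially the constant $\bar w_{i,n}(0)$, which is exactly what makes the limiting conormal derivative of the first blow-up profile constant and allows Lemma~\ref{lem: prescribed constant normal derivative_s} to close the argument.
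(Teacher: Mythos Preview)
Your argument is correct, and shares its core mechanism with the paper's proof: both rest on showing that the competition term seen by the first component converges uniformly on compacts to a \emph{constant}, so that $W_{1,n}=w_{1,n}-w_{1,n}(0)$ converges to a non-constant, globally $\C^{0,\alpha}$ solution of $L_aW_1=0$, $\partial_\nu^aW_1=\lambda$, contradicting Lemma~\ref{lem: prescribed constant normal derivative_s}. The use of Lemma~\ref{lem: trip estimate} to bound the cross-products $M_n\bar w_{i,n}(0)^p\bar w_{j,n}(0)^p$ when several components diverge is exactly the paper's step.

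The difference is organizational. You introduce the dominant index $\iota$ and split according to whether $M_n\sigma_n^p$ stays bounded away from $0$ or tends to $0$; in Case~1 you use a scaled decay estimate (Lemma~\ref{lem: decay with perturbations_s}) to force $\mathcal{U}=\{\iota\}$, and only then identify $\iota=1$. The paper avoids this split: it observes directly that, for every $j$, $M_n\bar w_{j,n}^p\to\lambda_j\ge0$ locally uniformly (using Lemma~\ref{lem: bound in Mn not infinitesimal local_s} when $\bar w_{j,n}(0)$ diverges), so that any component $i$ with bounded value at $0$ has limit satisfying $\partial_\nu^a w_{i,\infty}=-w_{i,\infty}^p\sum_{j}a_{ij}\lambda_j$, which by Lemma~\ref{lem: global eigenfunction_s} forces $w_{i,\infty}\equiv0$; hence $1\in\mathcal{U}$ automatically, and the analysis of $W_{1,n}$ proceeds without case distinction. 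Your route works but is slightly longer; the paper's unification via Lemma~\ref{lem: global eigenfunction_s} with general $\lambda_j$ is what makes the dichotomy on $M_n\sigma_n^p$ unnecessary.
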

\begin{proof}
By contradiction, let $\{\bar{\bw}_{n}(0)\}_{n \in \N}$ be unbounded. Then, by the previous lemma, $M_n\to0$.
To start with, we claim that for every $j$ there exists a constant $\lambda_j \geq 0$ such that, up to subsequences,
\[
    M_n \bar w_{j,n}^p \to \lambda_j \qquad \text{locally uniformly}.
\]
Indeed, if $\bar w_{j,n}(0)$ is bounded this follows by uniform H\"older bounds, with $\lambda_j=0$; if it is unbounded, from Lemma \ref{lem: bound in Mn not infinitesimal local_s} we obtain that $M_n \bar w_{j,n}^p(0) \to \lambda_j$, while
\[
    \sup_{\partial^0 B^+_r} |M_n \bar w_{j,n}^p- M_n \bar w_{j,n}^p(0) | = M_n \bar w_{j,n}^p(0) \sup_{\partial^0 B^+_r} \left|\left(\frac{\bar w_{j,n}}{\bar w_{j,n}(0)}\right)^p -1\right| \to 0.
\]

Now, let $i$ be such that $w_{i,n}(0)$ is bounded. As usual, we can use Lemma \ref{lem: mega lemma_s}
to show that $w_{i,n} \to w_{i,\infty} \in \C^{0,\alpha}(\R^{N+1}_+)$ in the local uniform topology, where, using the claim above, $w_{i,\infty}$ is a solution to
\[
    \begin{cases}
    L_a w_{i,\infty}= 0 &\text{ in } \R^{N+1}_+ \\
    \partial_{\nu}^a w_{i,\infty} = - w_{i,\infty}^p \sum_{j} a_{ij}\lambda_j  &\text{ on } \R^N.
    \end{cases}
\]
Lemma \ref{lem: global eigenfunction_s} then implies $w_{i,\infty} \equiv 0$: in particular, we have that $\bar w_{1,n}(0)$ is unbounded.

Let us then turn our attention to $w_{1,n}$. Again, if $j$ is such that $\bar w_{j,n}(0)$ is bounded, then by the previous discussion $\bar w_{j,n} \to 0$ locally uniformly and
\[
    \underbrace{M_n \bar{w}_{1,n}^p}_{\leq C \text{ (Lemma \ref{lem: bound in Mn not infinitesimal local_s})}} \bar{w}_{j,n}^p  \to 0.
\]
Otherwise, if $j$ is such that $\bar w_{j,n}(0)$ is unbounded, then Lemma \ref{lem: trip estimate} provides
\[
    C\geq M_n {w}_{1,n}(0)^p w_{j,n}(0)^p  \int\limits_{\partial^0 B_r^+} \tsum_{j \neq 1}a_{1j} \frac{w_{1,n}^{p+1}}{w_{1,n}(0)^p  (|w_{1,n}(0)| +1) } \frac{w_{j,n}^p}{w_{j,n}(0)^p} \, \de{x}
\]
so that $M_n {w}_{1,n}(0)^pw_{j,n}(0)^p$ is uniformly bounded. Since if $\{{w}_{j,n}(0)\}_{n \in \N}$ is unbounded then also $\{{w}_{j,n}(x)\}_{n \in \N}$ is, for any fixed $x$, and the same argument shows that $M_n {w}_{1,n}(x)^p w_{j,n}(x)^p$ is bounded. Now,
\begin{multline*}
    M_n \left|{w}_{1,n}(x)^p w_{j,n}(x)^p - {w}_{1,n}(0)^p w_{j,n}(0)^p\right| \\ \leq
    M_n {w}_{1,n}(x)^p w_{j,n}(x)^p \left| 1 - \frac{w_{j,n}(0)^p}{w_{j,n}(x)^p}\right| +
    M_n {w}_{1,n}(0)^p w_{j,n}(0)^p \left| \frac{w_{1,n}(x)^p}{w_{1,n}(0)^p}-1\right| \to0.
\end{multline*}
This shows the existence of a constant $\lambda$ such that, at least up to a subsequence,
\[
    f_{1,n}  -M_n \bar w_{1,n}^p \tsum_{j \neq 1} a_{1j}\bar w_{j,n}^p \to \lambda
\]
uniformly on every compact subset of $\R^N$, and the same holds true for the sequence $\{w_{1,n}\}_{n \in \N}$. Thus, as usual, $W_{1,n} = w_{1,n}-w_{1,n}(0)$ converges to $W_1$ which is nonconstant, globally H\"older continuous of exponent $\alpha < \min(1,2s)$, and which solves
\[
    \begin{cases}
    L_a W_1= 0 &\text{ in } \R^{N+1}_+ \\
    \partial_{\nu}^a W_1 = \lambda &\text{ on } \R^N.
    \end{cases}
\]
Invoking Lemma \ref{lem: prescribed constant normal derivative_s}, we obtain a contradiction.
\end{proof}

The boundedness of the sequences $\{\bar{\bw}_{n}(0)\}_{n \in \N}$ implies, by Lemma \ref{lem: mega lemma_s}, the convergence of both $\{\bar{\bw}_{n}\}_{n \in \N}$ and $\{\bw_{n}\}_{n \in \N}$ to the same blow-up limit.  Reasoning as in the proof of \cite[Lemma 6.13]{tvz1}, one can show that the convergence is also strong in the natural Sobolev space.
\begin{lemma}\label{lem: uniform implies strong convergence local_s}
There exists $\bw \in (H^{1;a}_{\loc}\cap \C^{0,\alpha})\left(\overline{\R^{N+1}_+}\right)$ such that, up to a subsequence,
\[
	\bw_{n} \to \bw \text{ in }(H^{1;a}\cap C)(K)
\]
for every compact $K\subset\overline{\R^{N+1}_+}$. Furthermore, each $w_i$ is $L_a$-harmonic, and $w_1$ is non constant.
\end{lemma}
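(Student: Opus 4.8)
Here is how I would prove Lemma~\ref{lem: uniform implies strong convergence local_s}.

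The plan is to assemble $\bw$ as a locally uniform limit from the compactness already at hand, read off its elementary properties, and then promote the convergence to strong $H^{1;a}$-convergence on compact sets by a Caccioppoli bound followed by a test-function argument. First I would extract the limit: by Lemma~\ref{lem: w(0) bdd_s} the sequence $\{\bar\bw_n(0)\}_n$ is bounded, hence so is $\{\bw_n(0)\}_n$ by Lemma~\ref{lem: mega lemma_s}\,\eqref{itm: close_seqs}; combined with the uniform control of the $\C^{0,\alpha}$-seminorm of $\bar\bw_n$ and of the oscillation of $\bw_n$ on compact sets (Lemma~\ref{lem: mega lemma_s}\,\eqref{itm: equihold}, \eqref{itm: close_seqs}), this makes both $\{\bar\bw_n\}_n$ and $\{\bw_n\}_n$ equibounded and equicontinuous on every compact subset of $\overline{\R^{N+1}_+}$. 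Ascoli--Arzel\`a together with a diagonal argument then yield, up to a subsequence, a common locally uniform limit $\bw\in\C^{0,\alpha}_{\loc}\left(\overline{\R^{N+1}_+}\right)$. Since $\osc_{B^+_1}\bar w_{1,n}=1$ for every $n$ by Lemma~\ref{lem: mega lemma_s}\,\eqref{itm: equihold}, passing to the uniform limit gives $\osc_{B^+_1}w_1=1$, so $w_1$ is non-constant.

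Next I would derive a Caccioppoli estimate. Fix $R>0$ and a cutoff $\phi\in C^\infty_c(B_{2R})$ with $\phi\equiv1$ on $B_R$; for $n$ large the problem \eqref{eqn: w_sol_s} is available on $B^+_{2R}$, so one may test the equation for $w_{i,n}$ with $\phi^2 w_{i,n}$. Discarding the favourable term $-M_n\int_{\partial^0B^+_{2R}}w_{i,n}^{p+1}\tsum_{j\neq i}a_{ij}w_{j,n}^p\,\phi^2\le0$, absorbing the cross term by Young's inequality, and using $\|f_{i,n}\|_\infty\to0$ together with the equiboundedness of $w_{i,n}$ just obtained, one gets $\sup_n\int_{B^+_R}y^a|\nabla w_{i,n}|^2\le C(R)$ for every $R>0$. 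In particular $w_i\in H^{1;a}_{\loc}\left(\overline{\R^{N+1}_+}\right)$; up to a further subsequence $\nabla w_{i,n}\rightharpoonup\nabla w_i$ weakly in $L^2(y^a;B^+_R)$ for each $R$ (the limit being $\nabla w_i$ because $w_{i,n}\to w_i$ in the sense of distributions); and letting $n\to\infty$ in $\int y^a\nabla w_{i,n}\cdot\nabla\varphi=0$ for $\varphi\in C^\infty_c(\R^{N+1}_+)$ shows that each $w_i$ is $L_a$-harmonic in $\R^{N+1}_+$.

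Finally I would upgrade to strong convergence, in the spirit of \cite[Lemma 6.13]{tvz1}. Testing \eqref{eqn: w_sol_s} for $w_{i,n}$ with $\phi^2(w_{i,n}-w_i)$ and rearranging so as to isolate the squared gradient of the difference gives
\begin{align*}
\int_{B^+_{2R}}y^a\phi^2|\nabla(w_{i,n}-w_i)|^2
={}& -\int_{B^+_{2R}}y^a\phi^2\,\nabla w_i\cdot\nabla(w_{i,n}-w_i)\\
&-2\int_{B^+_{2R}}y^a\phi\,(w_{i,n}-w_i)\,\nabla\phi\cdot\nabla w_{i,n}\\
&+\int_{\partial^0B^+_{2R}}f_{i,n}\,\phi^2(w_{i,n}-w_i)\\
&-M_n\int_{\partial^0B^+_{2R}}w_{i,n}^p\tsum_{j\neq i}a_{ij}w_{j,n}^p\,\phi^2(w_{i,n}-w_i).
\end{align*}
The first term on the right vanishes by the weak convergence just obtained; the second because $w_{i,n}-w_i\to0$ uniformly on $\overline{B^+_{2R}}$ while $\nabla w_{i,n}$ stays bounded in $L^2(y^a)$; the third because $\|f_{i,n}\|_\infty\to0$.

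I expect the competition term to be the main obstacle: neither the size of $M_n$ nor that of the products $w_{i,n}^pw_{j,n}^p$ is controlled, so one cannot pass to the limit inside it directly. The device I would use is to estimate the \emph{total} competition mass against the fixed cutoff $\phi^2$ by means of the equation alone: since $M_nw_{i,n}^p\tsum_{j\neq i}a_{ij}w_{j,n}^p=f_{i,n}-\partial_\nu^a w_{i,n}$ on $\partial^0B^+_{2R}$, testing with $\phi^2$ yields
\[
M_n\int_{\partial^0B^+_{2R}}w_{i,n}^p\tsum_{j\neq i}a_{ij}w_{j,n}^p\,\phi^2
=\int_{\partial^0B^+_{2R}}f_{i,n}\,\phi^2-\int_{B^+_{2R}}y^a\nabla w_{i,n}\cdot\nabla(\phi^2),
\]
which is uniformly bounded, thanks to $\|f_{i,n}\|_\infty\to0$ and, via Cauchy--Schwarz, to the Caccioppoli bound above. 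Multiplying this uniform bound by the uniformly infinitesimal quantity $\|w_{i,n}-w_i\|_{L^\infty(\partial^0B^+_{2R})}$ then forces the competition term to zero as well. Hence $\int_{B^+_R}y^a|\nabla(w_{i,n}-w_i)|^2\to0$, and together with the locally uniform convergence of the previous steps this gives $\bw_n\to\bw$ in $(H^{1;a}\cap C)(K)$ for every compact $K\subset\overline{\R^{N+1}_+}$, after a last diagonal extraction. All the other steps are routine weighted elliptic estimates; the only point requiring care is the handling of the competition boundary term, resolved as above by refusing to pass to the limit inside it and instead bounding its total mass from the equation itself.
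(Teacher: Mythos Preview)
Your argument is correct and is precisely the detailed version of what the paper sketches: compactness from Lemmas~\ref{lem: mega lemma_s} and~\ref{lem: w(0) bdd_s} via Ascoli--Arzel\`a, followed by a Caccioppoli bound and the test-function computation of \cite[Lemma~6.13]{tvz1}, with the competition term handled by bounding its total mass against a fixed cutoff. The only superfluous step is the ``last diagonal extraction'': once uniform convergence is established, the weak $L^2(y^a)$-limit of the gradients is already identified as $\nabla w_i$ along the whole subsequence, so no further extraction is needed.
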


Depending on the behavior of the sequence $M_n$, the limiting functions $\bw$ satisfy a different limiting problem: first, we can exclude the case $M_n \to 0$.
\begin{lemma}
There exists $C>0$ such that $M_n \geq C$.
\end{lemma}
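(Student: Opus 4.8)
The goal is to rule out $M_n \to 0$ (along any subsequence), which — combined with the previous lemmas — will force $M_n$ to be bounded away from zero. The strategy is to argue by contradiction: assume $M_n \to 0$ and derive a contradiction with the fact that the first component $w_1$ of the blow-up limit is nonconstant (Lemma \ref{lem: uniform implies strong convergence local_s}). Since $M_n \to 0$ and all the relevant quantities $w_{i,n}(0)$ are now uniformly bounded (Lemma \ref{lem: w(0) bdd_s}), the competition terms $M_n w_{i,n}^p \sum_{j\neq i} a_{ij} w_{j,n}^p$ should converge locally uniformly to zero, and the reaction terms $f_{i,n}$ converge to zero by Lemma \ref{lem: mega lemma_s}; hence in the limit each $w_i$ solves the purely $L_a$-harmonic problem with zero conormal derivative:
\[
\begin{cases}
L_a w_i = 0 & \text{in } \R^{N+1}_+,\\
\partial_\nu^a w_i = 0 & \text{on } \R^N.
\end{cases}
\]

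**Key steps.** First I would record that, by Lemma \ref{lem: w(0) bdd_s}, $\{\bar\bw_n(0)\}_n$ and hence $\{\bw_n(0)\}_n$ is bounded, so by the closeness of the two sequences (Lemma \ref{lem: mega lemma_s}, item \eqref{itm: close_seqs}) together with the uniform Hölder bound (item \eqref{itm: equihold}), $\bw_n$ is uniformly bounded on every compact set. Second, I would pass to the limit in the boundary condition of \eqref{eqn: w_sol_s}: the term $M_n w_{i,n}^p \sum_{j\neq i} a_{ij} w_{j,n}^p \to 0$ locally uniformly since $M_n\to 0$ and $w_{i,n}$ is locally bounded, while $\|f_{i,n}\|_\infty \to 0$. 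This gives that the limit $w_i$ (whose existence and $L_a$-harmonicity come from Lemma \ref{lem: uniform implies strong convergence local_s}) satisfies $\partial_\nu^a w_i = 0$ on $\R^N$ in the weak sense. Third, I would apply the Liouville-type result of Lemma \ref{lem: first collection_s}: the even reflection of $w_i$ across $\{y=0\}$ is a global $L_a$-harmonic function on $\R^{N+1}$ (because $\partial_\nu^a w_i = 0$ makes the reflection a weak solution), it grows at most like $|X|^\alpha$ with $\alpha < \min(2s,1)$ by the uniform Hölder bound, and it is even in $y$; Lemma \ref{lem: first collection_s} then forces it to be constant. In particular $w_1$ is constant, contradicting Lemma \ref{lem: uniform implies strong convergence local_s}.

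**Main obstacle.** The delicate point is the passage to the limit in the weak formulation of the boundary condition, ensuring that the nonlinear competition term genuinely vanishes and that the even reflection of $w_i$ is a \emph{global} weak solution of $L_a w = 0$ on all of $\R^{N+1}$ (not merely on each half). This requires the $H^{1;a}_{\loc}$ strong convergence provided by Lemma \ref{lem: uniform implies strong convergence local_s} — uniform convergence alone would not let one test the equation — together with the observation that $M_n w_{i,n}^{p+1}\sum_{j\neq i}a_{ij}w_{j,n}^p$ is, by Lemma \ref{lem: trip estimate}, uniformly integrable on compact portions of $\partial^0 B_r^+$, so that the boundary integral against any fixed test function converges to zero. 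Once the reflected limit is identified as an entire $L_a$-harmonic function with the stated symmetry and growth, invoking Lemma \ref{lem: first collection_s} closes the argument. I would also note that this is exactly the step where the Lotka–Volterra structure differs from the variational case of \cite{tvz1,tvz2}: the sign and integrability of the competition term must be handled via the monotonicity-type identity of Lemma \ref{lem: trip estimate} rather than via an energy comparison.
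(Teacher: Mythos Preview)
Your proposal is correct and follows essentially the paper's argument: assume $M_n\to 0$ along a subsequence, pass to the limit in \eqref{eqn: w_sol_s} to obtain $\partial_\nu^a w_i = 0$ on $\R^N$, and invoke the Liouville result (Lemma \ref{lem: first collection_s} applied to the even reflection, or equivalently Lemma \ref{lem: prescribed constant normal derivative_s} with $\lambda=0$) to force every $w_i$ constant, contradicting that $w_1$ is nonconstant. Your detour through Lemma \ref{lem: trip estimate} in the ``Main obstacle'' paragraph is harmless but unnecessary: once $\bar\bw_n(0)$ is bounded (Lemma \ref{lem: w(0) bdd_s}), the $w_{i,n}$ are locally uniformly bounded, so $M_n w_{i,n}^p\sum_{j\neq i} a_{ij}w_{j,n}^p \to 0$ locally uniformly, which already suffices to pass to the limit.
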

\begin{proof}
Let us assume that there exists a subsequence $M_{n_k}$ that converges to $0$. Passing to the limit in the sequence, we obtain as a limiting problem
\[
    \begin{cases}
        L_a w_i = 0 &\text{in $\R^{N+1}_+$}\\
        \partial_{\nu}^a w_i = 0 &\text{on $\R^{N}$}.
    \end{cases}
\]
By Lemma \ref{lem: first collection_s} each $w_i$ is constant, and this is in contradiction with the fact that $w_1$
oscillates in the half-ball $B^+$.
\end{proof}
\begin{lemma}\label{lem: Mn to 1}
If $M_n \to M>0$, then the blow-up profiles $\bw$ solve
\[
    \begin{cases}
        L_a w_i = 0 &\text{in $\R^{N+1}_+$}\\
        \partial_{\nu}^a w_i = -M w_i^p \sum_{j \neq i} a_{ij} w_j^p &\text{on $\R^{N}$}.
    \end{cases}
\]
\end{lemma}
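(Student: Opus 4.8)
The plan is to obtain the limiting system by passing to the limit in the weak formulation of \eqref{eqn: w_sol_s}, exploiting the strong convergence $\bw_n \to \bw$ in $(H^{1;a}\cap C)(K)$ for every compact $K$ (Lemma \ref{lem: uniform implies strong convergence local_s}) together with the facts $r_n\to0$, $\tau_n B^+\to\R^{N+1}_+$ and $\|f_{i,n}\|_\infty\to0$ from Lemma \ref{lem: mega lemma_s}. First I would fix a test function $\varphi\in C_c^\infty(\overline{\R^{N+1}_+})$; since $r_n\to0$, for $n$ large $\supp\varphi\subset\overline{\tau_n B^+}$, and the weak form of \eqref{eqn: w_sol_s} on the support of $\varphi$ reads
\[
    \int_{\R^{N+1}_+} y^a \nabla w_{i,n}\cdot\nabla\varphi
    = \int_{\R^N}\left( f_{i,n}(x,\bw_n) - M_n w_{i,n}^p \tsum_{j\neq i}a_{ij}w_{j,n}^p\right)\varphi.
\]
On the left-hand side the strong $H^{1;a}$-convergence on $K:=\supp\varphi$ gives $\nabla w_{i,n}\to\nabla w_i$ in $L^2(K,y^a)$, so that integral converges to $\int_{\R^{N+1}_+} y^a\nabla w_i\cdot\nabla\varphi$.

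For the boundary integral, the reaction term disappears since $\|f_{i,n}\|_{L^\infty}\to0$. For the competition term, Lemma \ref{lem: mega lemma_s}\eqref{itm: close_seqs} and Lemma \ref{lem: w(0) bdd_s} provide a uniform bound $0\le w_{i,n}\le R_K$ on $K\cap\overline{\tau_n B^+}$, while $w_{i,n}\to w_i$ uniformly there; since $t\mapsto t^p$ is uniformly continuous on $[0,R_K]$, this yields $w_{i,n}^p\to w_i^p$ uniformly on $K$. Using moreover $M_n\to M$, one gets
\[
    M_n w_{i,n}^p \tsum_{j\neq i}a_{ij}w_{j,n}^p \longrightarrow M w_i^p \tsum_{j\neq i}a_{ij}w_j^p
\]
uniformly on $K\cap\{y=0\}$. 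Passing to the limit in the identity above we obtain
\[
    \int_{\R^{N+1}_+} y^a \nabla w_i\cdot\nabla\varphi = -M\int_{\R^N} w_i^p \tsum_{j\neq i}a_{ij}w_j^p\,\varphi
    \qquad\text{for all }\varphi\in C_c^\infty(\overline{\R^{N+1}_+}),
\]
which is exactly the weak formulation of the claimed system; combined with the interior $L_a$-harmonicity already recorded in Lemma \ref{lem: uniform implies strong convergence local_s}, this completes the proof.

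This step is essentially a stability-under-limits argument: all the genuine work (boundedness of $\{\bar\bw_n(0)\}$, strong convergence of the two blow-up sequences to a common limit, exhaustion of the half-space, and the decay $\|f_{i,n}\|_\infty\to0$) has already been carried out in the preceding lemmas, so I do not expect a serious obstacle here. The only point requiring a little care is the convergence of the nonlinearities $w_{i,n}^p$ in the regime $0<p<1$, where $t\mapsto t^p$ is not Lipschitz at the origin; this is handled, without any extra hypothesis, by uniform continuity on the compact range $[0,R_K]$ together with the local uniform convergence and the local uniform bounds.
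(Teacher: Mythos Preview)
Your argument is correct and is precisely the ``direct consequence of Lemma \ref{lem: uniform implies strong convergence local_s}'' that the paper invokes as its entire proof: you have simply unpacked the passage to the limit in the weak formulation, using the strong $(H^{1;a}\cap C)$-convergence together with $M_n\to M$ and $\|f_{i,n}\|_\infty\to0$. The extra care you take for $0<p<1$ via uniform continuity on $[0,R_K]$ is a nice touch but not a departure from the paper's approach.
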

\begin{proof}
This is a direct consequence of Lemma \ref{lem: uniform implies strong convergence local_s}.
\end{proof}
To conclude the proof of Proposition \ref{prp: blow_up} we are left to analyze the case $M_n \to \infty$.
\begin{lemma}\label{lem: Mn to infty}
If $M_n \to \infty$, then the blow-up profiles $\bw$ are such that $w_i w_j|_{y=0} = 0$, for every $j \neq i$, and
\begin{equation}\label{eqn: capucci}
    \begin{cases}
        \partial_{\nu}^a w_i \leq 0 \\
        \partial_{\nu}^a \left(w_i - \tsum_{j\neq i} \frac{a_{ij}}{a_{ji}} w_j\right) \geq 0 \\
        w_i \partial_{\nu}^a \left(w_i - \tsum_{j\neq i} \frac{a_{ij}}{a_{ji}} w_j\right) = 0, \\
    \end{cases}
\end{equation}
where the inequalities are understood in the sense of $\R^N$-measures.
\end{lemma}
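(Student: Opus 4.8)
The plan is to pass to the limit in the weak formulation of \eqref{eqn: w_sol_s}, using the strong $H^{1;a}_{\loc}$ convergence from Lemma \ref{lem: uniform implies strong convergence local_s}, and to extract the information encoded in the competition term when $M_n\to\infty$. First I would establish the segregation on the trace: for fixed $i\neq j$ and $r>0$, Lemma \ref{lem: trip estimate} (together with the boundedness of $w_{i,n}(0)$ from Lemma \ref{lem: w(0) bdd_s}) gives $M_n\int_{\partial^0B_r^+}a_{ij}w_{i,n}^{p+1}w_{j,n}^p\,\de x\leq C(r)$; since $M_n\to\infty$ and $w_{i,n}^{p+1}w_{j,n}^p\to w_i^{p+1}w_j^p$ uniformly on $\partial^0B_r^+$, we get $\int_{\partial^0B_r^+}w_i^{p+1}w_j^p\,\de x=0$, hence $w_iw_j=0$ a.e. on $\R^N$, for every $j\neq i$ (letting $r\to\infty$).

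Next I would derive the first inequality $\partial_\nu^a w_i\leq0$. Test \eqref{eqn: w_sol_s} with a nonnegative $\varphi\in C_c^\infty(\R^{N+1})$: the left side $\int y^a\nabla w_{i,n}\cdot\nabla\varphi\to\int y^a\nabla w_i\cdot\nabla\varphi$ by strong convergence, and $\int_{\partial^0B}f_{i,n}w_{i,n}^0\varphi$—more precisely $\int_{\partial^0}f_{i,n}\varphi\to0$ since $\|f_{i,n}\|_\infty\to0$ (Lemma \ref{lem: mega lemma_s}(1)); the competition term $-M_n\int_{\partial^0}w_{i,n}^p\sum_{j\neq i}a_{ij}w_{j,n}^p\varphi$ is $\leq 0$. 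Therefore $\int y^a\nabla w_i\cdot\nabla\varphi\leq 0$ for all $\varphi\geq0$, which is exactly $\partial_\nu^a w_i\leq0$ in the sense of $\R^N$-measures (recalling each $w_i$ is $L_a$-harmonic in $\R^{N+1}_+$, so the distribution $L_aw_i$ is supported on $\{y=0\}$).

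For the second inequality and the equality I would work with the combination $W_i:=w_i-\sum_{j\neq i}\frac{a_{ij}}{a_{ji}}w_j$. Multiplying the $i$-th equation in \eqref{eqn: w_sol_s} by $a_{ji}^{-1}$... actually the clean bookkeeping is: observe that in $\partial_\nu^a W_{i,n}=\partial_\nu^a w_{i,n}-\sum_{j\neq i}\frac{a_{ij}}{a_{ji}}\partial_\nu^a w_{j,n}$ the competition contributions are $-M_n w_{i,n}^p\sum_{j\neq i}a_{ij}w_{j,n}^p + \sum_{j\neq i}\frac{a_{ij}}{a_{ji}}M_n w_{j,n}^p\sum_{l\neq j}a_{jl}w_{l,n}^p$; the terms with $l=i$ cancel exactly, leaving $\sum_{j\neq i}\frac{a_{ij}}{a_{ji}}M_n w_{j,n}^p\sum_{l\neq j,i}a_{jl}w_{l,n}^p\geq0$ plus the vanishing $f$-terms, so passing to the limit gives $\partial_\nu^a W_i\geq0$ (again as measures). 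Finally, for $w_i\partial_\nu^a W_i=0$: on the set $\{w_i>0\}$ the segregation $w_iw_j=0$ forces $w_j=0$ there for all $j\neq i$, so the limiting measure $\partial_\nu^a W_i$, which by the above is $\lim(-M_n W_{i,n}\text{-competition})$ supported where some product $w_jw_l$ is positive, must vanish on $\{w_i>0\}$; rigorously one tests with $\varphi w_i$ for $\varphi\in C_c^\infty$, uses that $M_n w_{i,n}^p w_{j,n}^p w_{i,n}\to0$ (by the trip estimate again, now with the extra $w_{i,n}$ factor bounded) and $M_n w_{j,n}^p w_{l,n}^p w_{i,n}\leq M_n w_{j,n}^p w_{i,n}^p\cdot(\text{bounded})\to0$ on compacts, yielding $\int w_i\,\de(\partial_\nu^a W_i)=0$; combined with the two sign conditions this measure-theoretic identity gives $w_i\partial_\nu^a W_i=0$.

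The main obstacle I expect is the last point: making rigorous that the (a priori merely Radon) measure $\partial_\nu^a W_i$ annihilates $w_i$, since one cannot simply multiply a measure by a continuous function and pass to a limit of products without controlling the competition terms pointwise against the weight. The key technical input is exactly Lemma \ref{lem: trip estimate}, which bounds $M_n\int_{\partial^0B_r^+}a_{ij}w_{i,n}^{p+1}w_{j,n}^p$ — note the $w_{i,n}^{p+1}$ has one more power of $w_{i,n}$ than the equation's $w_{i,n}^p$, which is precisely what is needed to kill $M_n w_{i,n}^p\cdot w_{j,n}^p\cdot w_{i,n}$ — and one must also handle the cross terms $M_n w_{j,n}^p w_{l,n}^p w_{i,n}$ with $j,l\neq i$ by absorbing, using $w_{i,n}w_{l,n}\to0$ uniformly on compacts so that $M_n w_{j,n}^p w_{l,n}^p w_{i,n}\leq a_{jl}^{-1}\cdot(M_n w_{j,n}^{p+1}w_{l,n}^p)\cdot(w_{i,n}/w_{l,n})$ is not obviously bounded — instead one estimates it by $\tfrac12 M_n(w_{j,n}^{2p}w_{i,n}+w_{l,n}^{2p}w_{i,n})$ or directly invokes that on $\{w_i>0\}$ compact subsets $w_{l,n}\to0$ while $M_n w_{l,n}^p$ stays bounded by Lemma \ref{lem: bound in Mn not infinitesimal local_s}-type arguments. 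Tidying this up is the crux; everything else is a routine passage to the limit using strong convergence and $\|f_{i,n}\|_\infty\to0$.
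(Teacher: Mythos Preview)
Your overall strategy coincides with the paper's: establish segregation, pass to the limit in \eqref{eqn: w_sol_s} with a nonnegative test function for $\partial_\nu^a w_i\leq0$, exploit the exact cancellation of the $w_{i,n}^p w_{j,n}^p$ terms in the linear combination $W_{i,n}$ to get $\partial_\nu^a W_i\geq0$, and treat the equality by multiplying through by $w_{i,n}$. (For segregation the paper tests \eqref{eqn: w_sol_s} against a cutoff $\psi\geq0$ to obtain directly $M_n\int_K w_{i,n}^p w_{j,n}^p\leq C(K)$; your route via Lemma~\ref{lem: trip estimate} also works, but the bound on $M_n\int_K w_{j,n}^p w_{h,n}^p$ with the lower exponent is what is actually needed later.)

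The gap is precisely where you locate it, and your proposed fixes do not close it. After the cancellation, the only surviving competition terms in $\partial_\nu^a W_{i,n}$ are the triple cross terms $M_n\frac{a_{ij}a_{jh}}{a_{ji}}w_{j,n}^p w_{h,n}^p$ with $j,h$ \emph{both} different from $i$ (and $j\neq h$); there are no $M_n w_{i,n}^{p+1}w_{j,n}^p$ terms left, so the trip estimate is not the relevant tool here. Your alternative suggestion that ``$M_n w_{l,n}^p$ stays bounded'' via Lemma~\ref{lem: bound in Mn not infinitesimal local_s} is not justified: that lemma applies to components whose value at the origin diverges, whereas here all $w_{l,n}(0)$ are bounded. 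The AM--GM rearrangement $M_n w_{j,n}^p w_{h,n}^p w_{i,n}\leq\frac12 M_n(w_{j,n}^{2p}+w_{h,n}^{2p})w_{i,n}$ leads to quantities not controlled by any available estimate.

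The paper's device is an $\eps$-level-set decomposition combined with Lemma~\ref{lem: decay with perturbations_s}. Fix $\eps>0$ and split $K$ into $\supp_i^\eps:=\{w_i(\cdot,0)\geq\eps\}$ and its complement. On $K\setminus\supp_i^\eps$, uniform convergence gives $w_{i,n}\leq2\eps$, so
\[
M_n\int_{K\setminus\supp_i^\eps} w_{i,n}\,w_{j,n}^p w_{h,n}^p\,\de x\;\leq\;2\eps\cdot M_n\int_K w_{j,n}^p w_{h,n}^p\,\de x\;\leq\;C\eps.
\]
On $K\cap\supp_i^\eps$, one has $w_{i,n}\geq\eps/2$; hence in the equation for $w_{j,n}$ (and likewise $w_{h,n}$) the competition dominates, and Lemma~\ref{lem: decay with perturbations_s} yields $w_{j,n}^p,\,w_{h,n}^p\leq C/(M_n\eps^p)$ on that set, giving a contribution of order $C/(M_n\eps^{2p})$. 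Taking $n$ large so that $M_n\eps^{2p}\geq\eps^{-1}$ and then $\eps\to0$ finishes the proof.
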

%
%
\begin{proof}
For any nonnegative $\psi\in \C^{\infty}_0(\R^{N+1})$, we test equation \eqref{eqn: w_sol_s} to find
\[
    0 \leq \int\limits_{\partial^0 B_r} M_n w_{i,n}^p \tsum_{j \neq i} a_{ij} w_{j,n}^p \psi \leq \int\limits_{\partial^0 B_r} \left(f_{i,n} \psi - w_{i,n} \partial^{a}_{\nu}\psi \right) - \int\limits_{B_r^+} w_{i,n} L_a \psi.
\]
Since the right hand side is bounded by local uniform convergence, we infer that
\begin{equation}\label{eqn: bounded competition}
    M_n \int_{K} w_{i,n}^p w_{j,n}^p \, \de x \leq C(K) \qquad \forall j \neq i,
\end{equation}
for any compact set $K \subset \R^N$. In particular it follows that, at the limit, $w_i w_j |_{y = 0} = 0$ for every $j \neq i$. Furthermore, the first and the second inequalities in \eqref{eqn: capucci} follow from equation
\eqref{eqn: w_sol_s} and from the fact that, for every $n$,
\[
    \partial_{\nu}^a \left(w_{i,n} - \tsum_{j\neq i} \frac{a_{ij}}{a_{ji}} w_{j,n}\right) = f_{i,n} - \tsum_{j\neq i} \frac{a_{ij}}{a_{ji}} f_{j,n} + M_n \tsum_{\substack{j\neq i\\ h \neq i,j}} \frac{a_{ij} }{ a_{ji}} a_{jh} w_{j,n}^p w_{h,n}^p
\]
(we recall that the reaction terms $f_{i,n} \to 0$ uniformly in $\R^N$). Finally, the identity in
\eqref{eqn: capucci} can be obtained by multiplying the previous equation by $w_{i,n}$, once one can estimate the
terms $M_n w_{i,n} w_{j,n}^p w_{h,n}^p$. To this aim, let $\eps > 0$,
and let us define the (possibly empty) set
\[
    \supp_{i}^\eps = \{x \in \R^N: w_{i}(x,0) \geq \eps \}.
\]
We observe that for any $K \subset \R^N$ compact set, the local uniform convergence of the sequence $\{\bw_n\}$ implies
\[
    \begin{cases}
        w_{i,n}(x,0) \geq \frac{\eps}{2} &\forall x \in K \cap \supp_{i}^{\eps}\\
        w_{i,n}(x,0) \leq 2\eps &\forall x \in K \setminus \supp_{i}^{\eps},
    \end{cases}
\]
for any $n$ large enough. As a consequence
\begin{multline*}
    M_n \int_{K} w_{i,n} w_{j,n}^p w_{h,n}^p\mathrm{d}x \leq M_n \int\limits_{K \setminus \supp_{i}^\eps} w_{i,n} w_{j,n}^p w_{h,n}^p \mathrm{d}x + M_n \int\limits_{K \cap \supp_{i}^\eps} w_{i,n} w_{j,n}^p w_{h,n}^p \mathrm{d}x \\
        \leq M_n 2\eps \int\limits_{K \setminus \supp_{i}^\eps} w_{j,n}^p w_{h,n}^p \mathrm{d}x + M_n \int\limits_{K \cap \supp_{i}^\eps} w_{i,n} 2^{2p} \frac{\left(1+\|f_{j,n}\|\right)^p}{M_n \eps^p} \frac{\left(1+\|f_{h,n}\|\right)^p}{M_n \eps^p} \mathrm{d}x \\
        \leq C \left( \eps + \frac{1}{M_n} \eps^{-2p}\right),
\end{multline*}
where we used estimate \eqref{eqn: bounded competition} and Lemma \ref{lem: decay with perturbations_s} to bound the two terms. Choosing $n$ sufficiently large so that $\eps^{-2p} \leq \eps M_n$, we conclude by the arbitrariness of $\eps$ that
\[
    \lim_{n \to \infty} M_n \int_{K} w_{i,n} w_{j,n}^p w_{h,n}^p\mathrm{d}x = 0 \qquad \text{for every $i \neq j \neq h$.}\qedhere
\]
\end{proof}
\begin{corollary}\label{cor: blow-up_aij=1}
Let $a_{ij}=1$ for every $i,j$ and $\bw$ be a blow-up profile. For every $i\neq j$ the functions $z= w_i-w_j$ are such that
\[
\begin{cases}
        L_a z^\pm \leq 0 &\text{in $\R^{N+1}_+$}\\
        z^\pm \partial_{\nu}^a z^\pm \leq 0 &\text{on $\R^{N}$}.
    \end{cases}
\]
\end{corollary}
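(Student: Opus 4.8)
The plan is to fix a pair $i\neq j$ and work with the single function $z:=w_i-w_j$. Since each $w_l$ is $L_a$-harmonic in $\R^{N+1}_+$ by Proposition~\ref{prp: blow_up}(1), so is $z$; because $t\mapsto t^\pm$ is convex and Lipschitz, Kato's inequality gives $L_a z^\pm\leq 0$ in $\R^{N+1}_+$, which is already the first of the two asserted bounds. Writing $L_a z^+=-\sigma$ with $\sigma\geq 0$ a Radon measure on $\R^{N+1}_+$, I note that $\sigma$ is carried by the closed set $\{z=0\}\cap\R^{N+1}_+$: on the open sets $\{z>0\}$ and $\{z<0\}$ the function $z^+$ equals the $L_a$-harmonic functions $z$, respectively $0$, so $\sigma$ puts no mass there, while $z^+\equiv 0$ on $\{z=0\}$; the same holds for $z^-$. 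This remark will be used to discard a bulk term at the very end.

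The core step is to identify $\partial_\nu^a z$ and to check a sign, separating the two regimes of Proposition~\ref{prp: blow_up}(3). In the competitive one, Lemma~\ref{lem: Mn to 1} together with $a_{lm}=1$ gives $\partial_\nu^a w_l=-M w_l^p\tsum_{m\neq l}w_m^p$, so that
\[
\partial_\nu^a z= M\bigl(w_j^p-w_i^p\bigr)\tsum_{h\neq i,j}w_h^p=:g,
\]
the diagonal terms $w_i^pw_j^p$ cancelling — this is exactly where the hypothesis $a_{ij}=1$ enters. Since $p>0$ one has $w_i^p>w_j^p$ on $\{w_i>w_j\}$, hence $g\leq 0$ there, while $z^+=0$ on $\{w_i\leq w_j\}$; thus $z^+g\leq 0$, and symmetrically $z^-g\geq 0$. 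In the segregated regime set $\mu_l:=-\partial_\nu^a w_l$; Lemma~\ref{lem: Mn to infty} with $a_{lm}=1$ says precisely that each $\mu_l$ is a nonnegative Radon measure on $\R^N$ and that $w_l\mu_l=w_l\tsum_{m\neq l}\mu_m$, whereas $w_iw_j=0$ on $\{y=0\}$ forces $z^+|_{y=0}=w_i|_{y=0}$ and $z^-|_{y=0}=w_j|_{y=0}$. Then $\partial_\nu^a z=-\mu_i+\mu_j$, and for every $\psi\geq 0$
\[
\int_{\R^N}z^+\psi\,\de{(\partial_\nu^a z)} = -\int_{\R^N}w_i\psi\,\de{\mu_i} + \int_{\R^N}w_i\psi\,\de{\mu_j} \leq 0,
\]
because $w_i\mu_i=w_i\tsum_{m\neq i}\mu_m\geq w_i\mu_j$; likewise $\int_{\R^N}z^-\psi\,\de{(\partial_\nu^a z)}\geq 0$. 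So in both regimes $z^+\,\partial_\nu^a z\leq 0\leq z^-\,\partial_\nu^a z$ as $\R^N$-measures.

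To conclude, take any $\psi\in\C^{\infty}_0(\R^{N+1})$ with $\psi\geq 0$ and insert the compactly supported function $z^+\psi\in H^{1;a}$ as a test function in the weak formulation of the problem satisfied by $z$, namely $\int_{\R^{N+1}_+}|y|^a\nabla z\cdot\nabla\varphi=\int_{\R^N}\varphi\,\de{(\partial_\nu^a z)}$. Using $\nabla z^+=\chi_{\{z>0\}}\nabla z$ and $z^+\nabla z=z^+\nabla z^+$ a.e., the integrands satisfy $\nabla z^+\cdot\nabla(z^+\psi)=\nabla z\cdot\nabla(z^+\psi)$ pointwise, hence
\[
\int_{\R^{N+1}_+}|y|^a\nabla z^+\cdot\nabla(z^+\psi)=\int_{\R^{N+1}_+}|y|^a\nabla z\cdot\nabla(z^+\psi)=\int_{\R^N}z^+\psi\,\de{(\partial_\nu^a z)}\leq 0.
\]
On the other hand, the weak formulation applied to $z^+$ (for which $L_az^+=-\sigma$) gives, with the same test function, $\int_{\R^N}z^+\psi\,\de{(\partial_\nu^a z^+)}=\int_{\R^{N+1}_+}|y|^a\nabla z^+\cdot\nabla(z^+\psi)+\int_{\R^{N+1}_+}z^+\psi\,\de{\sigma}$, and the $\sigma$-integral vanishes since $z^+=0$ on $\supp\sigma$. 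Therefore $z^+\,\partial_\nu^a z^+\leq 0$ on $\R^N$, and running the same argument with $-z=w_j-w_i$ in place of $z$ (note that $(-z)^+=z^-$, so the sign information needed is the inequality $z^-\,\partial_\nu^a z\geq 0$ established above) yields $z^-\,\partial_\nu^a z^-\leq 0$. I expect the main obstacle to be the segregated case: one is tempted to identify $\partial_\nu^a z^+$ with $\partial_\nu^a w_i$, but these genuinely differ away from $\{y=0\}$ (there $z^+\neq w_i$), so the sign must be extracted solely from the reflection identities $w_l\mu_l=w_l\tsum_{m\neq l}\mu_m\geq w_l\mu_m$ of Lemma~\ref{lem: Mn to infty}, which is precisely what makes the two competition terms recombine correctly.
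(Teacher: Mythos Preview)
Your argument is correct. The paper takes a shorter and more uniform route: instead of splitting into the two regimes at the limit, it goes back to the approximating sequence $\bw_n$ and computes directly
\[
(w_{i,n}-w_{j,n})^\pm\,\partial_\nu^a(w_{i,n}-w_{j,n}) = (f_{i,n}-f_{j,n})(w_{i,n}-w_{j,n})^\pm - M_n\underbrace{(w_{i,n}-w_{j,n})^\pm(w_{i,n}^p-w_{j,n}^p)}_{\geq 0}\tsum_{h\neq i,j}w_{h,n}^p,
\]
where the diagonal competition terms cancel precisely because $a_{ij}=1$; since $f_{i,n}\to 0$ and $\bw_n\to\bw$ strongly in $H^{1;a}_{\loc}$ (Lemma~\ref{lem: uniform implies strong convergence local_s}), the desired sign survives in the limit without having to distinguish between $M_n\to M$ and $M_n\to\infty$. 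Your approach has the virtue of being intrinsic to the blow-up profile $\bw$, relying only on the information recorded in Proposition~\ref{prp: blow_up}(3); the price is the additional measure-theoretic bookkeeping in the segregated regime (identifying $z^+|_{y=0}=w_i|_{y=0}$ and extracting the sign from the reflection identity $w_i\mu_i=w_i\tsum_{m\neq i}\mu_m$), and the explicit passage from $z^+\partial_\nu^a z$ to $z^+\partial_\nu^a z^+$ via Kato and the support of $\sigma$. Both arguments hinge on the same algebraic cancellation, but the paper exploits it one step earlier.
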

\begin{proof}
A subtraction of the equation satisfied by $w_{i,n}$ and $w_{j,n}$ yields
\begin{multline*}
(w_{i,n} - w_{j,n})^\pm\partial_{\nu}^a (w_{i,n} - w_{j,n}) = (f_{i,n} -f_{j,n})(w_{i,n} - w_{j,n})^\pm \\
- M_n\underbrace{ (w_{i,n} - w_{j,n})^\pm(w_{i,n}^p-w_{j,n}^p)}_{\geq0} \sum_{h \neq i, j} w_{h,n}^p.\qedhere
\end{multline*}
\end{proof}

\section{Uniform H\"older bounds}

We are ready to show the almost optimal uniform H\"older bounds, in the case of two competing species. This will be
a consequence of the following lemma.
\begin{lemma}\label{lem: 3}
Under the assumption of Proposition \ref{prp: blow_up}, at least three components of the blow-up profile $\bw$
are non constant, and all the constant components are trivial.
\end{lemma}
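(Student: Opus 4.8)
The plan is to argue by contradiction, exploiting the blow-up dichotomy of Proposition~\ref{prp: blow_up} together with the Liouville-type results collected in Section~2. Recall that the first component $w_1$ of the blow-up profile $\bw$ is non constant (by construction, its oscillation in $B_1^+$ equals $1$). So the content of the lemma is really twofold: \emph{(a)} at least two \emph{more} components must be non constant, and \emph{(b)} every constant component vanishes identically. I would treat the two alternatives of point~(3) of Proposition~\ref{prp: blow_up} separately.

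First suppose we are in the segregated alternative: $w_iw_j|_{y=0}=0$ for $i\neq j$, $\partial_\nu^a w_i\le0$, and $w_i\partial_\nu^a(w_i-\sum_{j\neq i}\tfrac{a_{ij}}{a_{ji}}w_j)=0$. Suppose, for contradiction, that all components other than $w_1$ are constant. On $\{w_1(x,0)>0\}$ the segregation forces each such constant to be $0$; but this set is non-empty and open, so \emph{every} other component is $\equiv0$ there, hence $\equiv0$ everywhere (a constant vanishing on a set of positive measure is trivial). Then $w_1$ satisfies $\partial_\nu^a w_1 = w_1\cdot 0 \le 0$ with the complementarity identity giving $w_1\partial_\nu^a w_1=0$, so $\partial_\nu^a w_1=0$ on $\{w_1>0\}$, and also on $\{w_1=0\}$ (where $w_1$ attains its minimum, forcing $\partial_\nu^a w_1\ge 0$ by Hopf, combined with $\partial_\nu^a w_1\le0$). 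Thus $\partial_\nu^a w_1\equiv0$, and Lemma~\ref{lem: first collection_s} (applied to the even reflection of $w_1$, which is then $L_a$-harmonic across $\{y=0\}$) gives that $w_1$ is an entire $L_a$-harmonic polynomial with subquadratic — in fact sub-$\min(2s,1)$ — growth, hence constant: contradiction. The same reasoning excludes the case of exactly one extra non-constant component: if $w_2$ is non constant but $w_3,\dots,w_k$ are constant, those constants again vanish on $\{w_2(x,0)>0\}$, hence identically; then $w_1$ and $w_2$ have disjoint positivity sets on $\R^N$, and on $\{w_1=0\}\supseteq\{w_2>0\}$ I can replay the Hopf/complementarity argument to conclude $\partial_\nu^a w_1\le0$ is actually $0$ off a null set — more carefully, I would use $w_1\partial_\nu^a w_1 = w_1\cdot\frac{a_{12}}{a_{21}}\partial_\nu^a w_2$ together with $w_1w_2=0$ to kill the right-hand side and again get $\partial_\nu^a w_1\equiv0$, contradiction via Lemma~\ref{lem: first collection_s}. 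This also establishes (b), since any constant component in the profile lives on all of $\R^N$, in particular on the non-empty positivity set of some non-constant component, and must therefore be $0$.

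Next consider the non-segregated alternative: there is $M>0$ with $\partial_\nu^a w_i=-Mw_i^p\sum_{j\neq i}a_{ij}w_j^p$ on $\R^N$. If all components but $w_1$ are constant, say $w_j\equiv c_j\ge0$ for $j\ge2$, then $\partial_\nu^a w_1 = -Mw_1^p\sum_{j\neq1}a_{1j}c_j^p = -\lambda w_1^p$ for the constant $\lambda:=M\sum_{j\ge2}a_{1j}c_j^p\ge0$. By Lemma~\ref{lem: global eigenfunction_s} (if $\lambda>0$) or Lemma~\ref{lem: first collection_s} (if $\lambda=0$, again via even reflection), $w_1$ is constant, contradicting its non-triviality. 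Similarly, if exactly $w_1,w_2$ are non constant and $w_j\equiv c_j$ for $j\ge3$: testing the equation for $w_2$ with $\lambda_2:=M\sum_{j\ge3}a_{2j}c_j^p$ I get $\partial_\nu^a w_2 \le -Ma_{21}w_2^p w_1^p \le 0$, but this alone is not yet a pure eigenfunction inequality unless I control $w_1$; here I would instead observe that for $j\ge3$ the equation reads $\partial_\nu^a c_j = -Mc_j^p\sum_{h\neq j}a_{jh}w_h^p$, and the left side is $0$ while on the positivity set of $w_1$ (non-empty since $w_1$ is non constant and $L_a$-harmonic, hence $w_1>0$ somewhere on $\R^N$ unless $w_1\equiv0$ on the trace — but if $w_1(\cdot,0)\equiv 0$ then $w_1\equiv0$ by Lemma~\ref{lem: first collection_s}, contradiction) the right side is strictly negative unless $c_j=0$. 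Hence all $c_j=0$ for $j\ge3$, and then $w_1,w_2$ solve the pure two-component system $\partial_\nu^a w_1=-Ma_{12}w_1^pw_2^p$, $\partial_\nu^a w_2=-Ma_{21}w_1^pw_2^p$ — but this is exactly the two-species blow-up problem, which in the companion analysis of Section~5 (or by Lemma~\ref{lem: global eigenfunction_s}-type arguments on the non-trivial ratio) is shown to have only constant solutions; more directly, one of $w_1,w_2$ then has a sign-definite, non-trivial conormal derivative and the ACF/Almgren monotonicity machinery of Section~5 forces a contradiction. Point~(b) follows as before, since a constant $c_j$ must satisfy $0=\partial_\nu^a c_j=-Mc_j^p\sum_{h\neq j}a_{jh}w_h^p$ with the sum strictly positive somewhere (as some $w_h$ is non constant hence positive somewhere), forcing $c_j=0$.

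The main obstacle I anticipate is the \emph{non-segregated} case with exactly two non-constant components: ruling out non-constant solutions of the genuine two-species eigenvalue system $\partial_\nu^a w_i=-Ma_{ij}w_i^pw_j^p$ is not a one-line Liouville statement, since neither $w_i$ satisfies a pure $-\lambda w_i^p$ inequality with fixed $\lambda$. I expect this to require either the comparison/decay Lemma~\ref{lem: decay with perturbations_s} applied iteratively (bootstrapping smallness of one component into smallness of the other), or the full strength of an Alt--Caffarelli--Friedman / Almgren monotonicity formula across the would-be free boundary $\{w_1w_2=0\}$ — which is precisely why the paper's blow-up output in point~(3) is stated as a dichotomy rather than a clean contradiction, and why this lemma is the technical heart of the two-species theorem. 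The segregated case, by contrast, reduces cleanly to Hopf's lemma plus Lemma~\ref{lem: first collection_s}.
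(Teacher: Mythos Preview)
Your case structure is right, and the treatment of ``constants are trivial'' and ``only $w_1$ is non constant'' is essentially the paper's. The real gap is the two-non-constant-component case, in \emph{both} alternatives.

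In the segregated alternative you write ``$w_1\partial_\nu^a w_1 = w_1\cdot\frac{a_{12}}{a_{21}}\partial_\nu^a w_2$ together with $w_1w_2=0$ to kill the right-hand side''. But $w_1w_2|_{y=0}=0$ does \emph{not} imply $w_1\,\partial_\nu^a w_2=0$: the function $w_2=cy^{2s}$ is $L_a$-harmonic, nonnegative, has zero trace, yet $\partial_\nu^a w_2=-2sc\neq0$. Likewise your Hopf step at points where $w_1(\cdot,0)=0$ yields $\partial_\nu^a w_1\le0$, not $\ge0$, so you cannot squeeze to equality. In the non-segregated alternative you explicitly concede the difficulty and appeal to ``the companion analysis of Section~5'' or ``ACF/Almgren machinery''; the former is circular (Section~5 rests on this very lemma), and the latter is the wrong tool here (the ACF formula of Section~4 is only available under $a_{ij}=1$, and no Almgren formula is invoked).

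The missing idea is a single linear combination that works in \emph{both} cases. Once $w_j\equiv0$ for $j\ge3$, look at $z:=a_{21}w_1-a_{12}w_2$. In the system case,
\[
\partial_\nu^a z \;=\; -M a_{21}a_{12}\,w_1^p w_2^p \;+\; M a_{12}a_{21}\,w_2^p w_1^p \;=\; 0,
\]
while in the segregated case the two inequalities in Proposition~\ref{prp: blow_up}(3) give $\partial_\nu^a z\ge0$ and $\partial_\nu^a(-z)\ge0$, hence again $\partial_\nu^a z=0$. Now Lemma~\ref{lem: prescribed constant normal derivative_s} forces $z$ constant, i.e. $w_1=C+\frac{a_{12}}{a_{21}}w_2$ with (up to swapping indices) $C\ge0$. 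If traces are segregated, $(C+\frac{a_{12}}{a_{21}}w_2)w_2|_{y=0}=0$ gives $w_2|_{y=0}\equiv0$ and then $w_1|_{y=0}\equiv C$; Lemma~\ref{lem: first collection_s} on the odd extensions makes both constant, a contradiction. In the system case, substitute into the equation for $w_2$ to get $\partial_\nu^a w_2\le -C'\,w_2^{2p}$, contradicting Lemma~\ref{lem: global eigenfunction_s}. This \emph{is} a one-line Liouville reduction---just not on $w_1$ or $w_2$ separately, but on their weighted difference.
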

\begin{proof}
We start by observing that each constant component has to be trivial: this is a direct consequence of the segregation condition $w_i w_j|_{y=0} = 0$ in the case $M_n \to \infty$, while, if $M_n \to M>0$, it is implied by the boundary condition and the fact that at least $w_1$ is non constant.

If $w_1$ is the only non constant component, then we obtain a contradiction with Lemma
\ref{lem: prescribed constant normal derivative_s} since in both cases $M_n \to M$
(Lemma \ref{lem: Mn to 1}) and $M_n \to \infty$ (Lemma \ref{lem: Mn to infty}), we have $\partial_\nu^a w_1 = 0$.

Let us now assume that only $w_1$ and, say, $w_2$ are non constant. Invoking again Lemma \ref{lem: Mn to 1} and \ref{lem: Mn to infty}, we obtain in both cases that
\[
    \begin{cases}
        L_a \left( a_{21}w_1-{a_{12}} w_2\right) = 0 &\text{in $\R^{N+1}_+$}\\
        \partial_{\nu}^a \left( {a_{21}}w_1-{a_{12}} w_2\right) = 0 &\text{on $\R^{N}$}.
    \end{cases}
\]
The application of Lemma \ref{lem: prescribed constant normal derivative_s} then implies
\[
 w_1 = C + \frac{a_{12}}{a_{21}} w_2,
\]
where, up to a permutation between $w_1$ and $w_2$, we may assume that the constant $C$ is non negative. If $M_n \to \infty$, the segregation condition $w_1 w_2 |_{y=0} = 0$ yields
\[
    \left(C + \frac{a_{12}}{a_{21}} w_2 \right) w_2 |_{y=0} = 0 \implies C = w_1 = w_2 = 0,
\]
a contradiction. In the remaining case, the function $w_2$ solves
\[
    \begin{cases}
        L_a w_2 = 0 &\text{in $\R^{N+1}_+$}\\
        \partial_{\nu}^a w_2 = - M w_2^p \left( C + \frac{a_{12}}{a_{21}} w_2\right)^p \leq -C'w_2^{2p} &\text{on $\R^{N}$,}
    \end{cases}
\]
in contradiction with Lemma \ref{lem: global eigenfunction_s}.
\end{proof}
\begin{proof}[Proof of Theorem \ref{thm: intro_hold2}]
The lemma above, combined with Proposition \ref{prp: blow_up}, provides all the results in the theorem but
the $H^{1;a}$ convergence; this last property follows from the uniform H\"older bounds, reasoning as in the proof
of Lemma \ref{lem: uniform implies strong convergence local_s}.
\end{proof}
Now we turn to the case of $k\geq3$ densities. We first prove uniform H\"older bounds with small exponent, when
the power $p$ is greater or equal than $1$. In order to
quantify such exponent, we need to introduce some notation. For any $\omega \subset \S_+^{N}:=\partial^+ B^+_1$ we consider the first eigenvalue of (the angular part of) $L_a$, defined as
\[
    \lambda_1(\omega) = \inf \left\{ \int\limits_{\S_+^{N}} |y|^a |\nabla_{T} u|^2 \, \de{\sigma}: u\equiv0\text{ on }\S_+^N\setminus\omega,\, \int\limits_{\S_+^{N}}  |y|^a u^2 \, \de{\sigma} = 1 \right\}
\]
(here $\nabla_T$ denotes the tangential part of the gradient), and the associated characteristic function
\[
    \gamma( t ) = \sqrt{ \left(\frac{N-2s}{2}\right)^2 + t } - \frac{N-2s}{2}.
\]
We are ready to state the following Liouville type result.
\begin{proposition}\label{prp: small_exp}
Under the assumption
\[
p\geq 1,
\]
let $\bw$ denote a blow-up limit as in Proposition \ref{prp: blow_up}, and let
\[
\nu=\nu(s,N):=\inf\left\{\frac{\gamma(\lambda_1(\omega_1))+\gamma(\lambda_1(\omega_2))}{2} :
\omega_i\subset\S_+^N,\, \omega_1\cap\omega_2\cap\{y=0\} = \emptyset \right\}.
\]
If
\[
|\bw(X)|\leq C(1+|X|^\alpha),\qquad\text{for some }\alpha<\nu,
\]
then $k-1$ components of $\bw$ are trivial.
\end{proposition}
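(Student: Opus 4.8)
The plan is to run a blow-down / monotonicity argument of Almgren type, exploiting the two differential inequalities in \eqref{eqn: capucci} together with the classical Alt--Caffarelli--Friedman (ACF) type two-phase estimate encoded in the definition of $\nu$. Since the growth hypothesis gives $|\bw(X)|\le C(1+|X|^\alpha)$ with $\alpha<\nu$, the idea is that the "competition" between any two nontrivial components is too strong to be sustained by a sublinear-growth solution, forcing all but one component to vanish.

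First I would reduce to a clean two-component obstacle-type problem. Suppose, for contradiction, that two components $w_i$ and $w_j$ are both nontrivial. From Lemma~\ref{lem: Mn to infty} we only need to treat the segregated alternative (the other alternative, $M_n\to M>0$, is handled by Lemma~\ref{lem: Mn to 1} together with Lemma~\ref{lem: global eigenfunction_s}, as in the proof of Lemma~\ref{lem: 3}; I would dispose of it first). In the segregated case $w_iw_j|_{y=0}=0$, and \eqref{eqn: capucci} gives $\partial^a_\nu w_i\le 0$ and $\partial^a_\nu w_i\ge 0$ on the common "zero set"—more precisely $L_a w_i=0$ in $\R^{N+1}_+$, $w_i\partial^a_\nu w_i=0$, and the analogous statements hold pairing $w_i$ against $w_j$ via the $a_{ij}/a_{ji}$ combination. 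Because $p\ge 1$, the function $a_{ji}w_i-a_{ij}w_j$ (or rather the relevant pair of one-sided quantities) behaves like a pair of disjointly supported subsolutions on $\partial^0 B^+$: each $w_m$ is $L_a$-subharmonic after odd reflection restricted to its support, with nonpositive conormal derivative. This is exactly the setup for an ACF-type monotonicity functional.

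The core step is then the following monotonicity/dimension estimate. For $r>0$ set
\[
J(r)=\left(\frac{1}{r^{2\gamma_0}}\int_{B_r^+}\frac{|y|^a|\nabla w_i|^2}{|X|^{N+a-1}}\,\de X\right)
\left(\frac{1}{r^{2\gamma_0}}\int_{B_r^+}\frac{|y|^a|\nabla w_j|^2}{|X|^{N+a-1}}\,\de X\right),
\]
with $\gamma_0$ chosen as the infimum $\nu$ appearing in the statement. Using the disjointness of the boundary supports, the conormal sign conditions, and the spectral inequality that defines $\lambda_1(\omega)$ and $\gamma(\cdot)$, one shows $J$ is monotone nondecreasing in $r$ (this is the fractional ACF inequality: on each sphere $\partial^+B_r^+$ the two Dirichlet energies live on disjoint caps $\omega_1,\omega_2$ with $\omega_1\cap\omega_2\cap\{y=0\}=\emptyset$, and $\tfrac{\gamma(\lambda_1(\omega_1))+\gamma(\lambda_1(\omega_2))}{2}\ge\nu$ by definition). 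On the other hand, the growth bound $|\bw(X)|\le C(1+|X|^\alpha)$ together with the Hölder/Caccioppoli estimates available for $L_a$-harmonic functions forces each factor of $J(r)$ to decay like $r^{2\alpha-2\nu}\to 0$ as $r\to\infty$, since $\alpha<\nu$. Monotonicity plus vanishing at infinity forces $J\equiv 0$, hence one of $w_i$, $w_j$ is constant, and by Lemma~\ref{lem: 3}'s reasoning (constant components are trivial) we contradict the assumption that both were nontrivial. Iterating, at most one component survives, i.e. $k-1$ are trivial.

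The main obstacle I anticipate is establishing the ACF monotonicity of $J$ in the fractional/weighted setting with the correct exponent $\nu$: one must carefully differentiate $J$, integrate by parts using $\partial^a_\nu w_m\le 0$ on the support and $w_m=0$ off it, and invoke the variational characterization of $\lambda_1(\omega)$ on each slice—this is where the disjointness condition $\omega_1\cap\omega_2\cap\{y=0\}=\emptyset$ and the definition of $\gamma$ enter, and where the hypothesis $p\ge1$ is genuinely used (to guarantee that the subtracted competition term has the right sign when one pairs $w_i$ against combinations of the other densities, cf. Corollary~\ref{cor: blow-up_aij=1}). A secondary technical point is justifying the absolute continuity of $J$ and the boundary integrations despite the degenerate weight $|y|^a$; this is routine given $a\in(-1,1)$ lies in the Muckenhoupt class $A_2$, and can be quoted from the trace and Caccioppoli estimates already used for Lemma~\ref{lem: trip estimate}.
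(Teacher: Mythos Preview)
Your treatment of the segregated alternative ($M_n\to\infty$) is essentially the paper's argument: the pair $(w_i,w_j)$ has disjoint boundary supports, each is $L_a$-harmonic with $\partial^a_\nu w_m\le 0$, and the ACF-type functional $J(r)$ with exponent $\nu$ is monotone while the growth bound forces it to vanish at infinity. That part is fine.

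The genuine gap is in how you dispose of the alternative $M_n\to M>0$. You say this is handled ``as in the proof of Lemma~\ref{lem: 3}'', but that argument only produces a contradiction when at most two components are nontrivial: it relies on the identity $\partial^a_\nu(a_{21}w_1-a_{12}w_2)=0$, which fails as soon as a third nontrivial component appears in the boundary system of Lemma~\ref{lem: Mn to 1}. Since you are trying to exclude precisely the situation in which several components coexist, you cannot assume the others away. Lemma~\ref{lem: global eigenfunction_s} does not apply directly either, because $\partial^a_\nu w_i=-Mw_i^p\sum_{j\neq i}a_{ij}w_j^p$ gives no useful bound of the form $\partial^a_\nu w_i\le -\lambda w_i^q$ without prior control on the other $w_j$.

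The paper's route is to run an ACF-type monotonicity formula also in the system case, now with the competition term $w_i^{p+1}w_j^p$ built into the energy (rather than working with segregated supports). This is exactly where the hypothesis $p\ge 1$ enters: the sign and convexity needed in the derivative computation for the perturbed ACF functional hold only for $p\ge 1$. Your attribution of the role of $p\ge 1$ to Corollary~\ref{cor: blow-up_aij=1} is misplaced---that corollary concerns the assumption $a_{ij}=1$, which is the hypothesis of Proposition~\ref{prp: equal_aij}, not of the present statement. In short: you need a second ACF formula, adapted to the coupled boundary system of Lemma~\ref{lem: Mn to 1}, and that is the step carrying the constraint $p\ge 1$.
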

\begin{remark}
As shown in \cite[Lemma 2.5]{tvz1}, \cite[Lemma 2.3]{tvz2}, $\nu(s,N)>0$ (and $\nu(s,N)\leq s$) for
every $0<s<1$, $N\geq1$.
\end{remark}
\begin{proof}
The proof is a byproduct of arguments already exploited in \cite{tvz2}. The first step consists in
obtaining a monotonicity formula of Alt-Caffarelli-Friedman type, with exponent between $\alpha$ and $\nu$.
In the case in which $\bw$ has segregated traces on $\{y=0\}$, this is \cite[Proposition 4]{tvz2}.
When $\bw$ satisfies a differential system, this can be done as in \cite[Proposition 5]{tvz2}, with minor changes:
namely, by replacing the term $v_i^2v_j^2$ with $v_i^{p+1}v_j^p$ (this can be done as far as $p\geq1$).

Once the validity of the monotonicity formula holds, one can deduce a related minimal growth rate for $\bw$, which is consistent with the one in the assumption only if all the components but one vanish.
\end{proof}
The result above can be improved, also removing the restriction on $p$, in the case of equal competition rates.
\begin{proposition}\label{prp: equal_aij}
Under the assumption
\[
a_{ij}=1\qquad\text{for every }1\leq i,j \leq k,
\]
let $\bw$ denote a blow-up limit as in Proposition \ref{prp: blow_up}, and let
\[
\mu=\mu(s,N):=\inf\left\{\frac{\gamma(\lambda_1(\omega_1))+\gamma(\lambda_1(\omega_2))}{2} :
\omega_i\subset\S_+^N,\, \omega_1\cap\omega_2 = \emptyset \right\}.
\]
If
\[
|\bw(X)|\leq C(1+|X|^\alpha),\qquad\text{for some }\alpha<\mu,
\]
then $k-1$ components of $\bw$ are trivial.
\end{proposition}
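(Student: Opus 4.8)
The plan is to argue by contradiction, following the scheme of the proof of Proposition~\ref{prp: small_exp} but monitoring \emph{differences} of the densities, so as to exploit Corollary~\ref{cor: blow-up_aij=1}. Assume that at least two components of $\bw$ are nonconstant. Two preliminary observations are in order. First, every constant component of $\bw$ is in fact trivial: if $w_i\equiv c_i>0$ then, when $M_n\to\infty$, the segregation $w_iw_j|_{y=0}=0$ forces $w_j|_{y=0}\equiv0$ for all $j\neq i$, whereas if $M_n\to M>0$ the identity $0=\partial^a_\nu w_i=-Mc_i^p\sum_{j\neq i}w_j^p$ gives the same; in either case $w_j\equiv0$ for $j\neq i$ (by odd reflection across $\{y=0\}$, Lemma~\ref{lem: first collection_s} and the growth bound), which contradicts $w_1$ being nonconstant. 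Second, a nonconstant component $w_i$ has a nontrivial trace on $\R^N$, for otherwise its odd extension across $\{y=0\}$ would be a global $L_a$-harmonic function of subcritical growth --- recall $\alpha<\mu\le s<\min(2s,1)$ --- hence constant by Lemma~\ref{lem: first collection_s}, forcing $w_i\equiv0$.

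Fix then two nonconstant components, say $w_1$ and $w_2$, both with nontrivial nonnegative traces, and set $z:=w_1-w_2$. The crucial point is that $z$ must change sign. If $M_n\to\infty$ this is immediate, since the traces of $w_1$ and $w_2$ are segregated and both nontrivial. If instead $M_n\to M>0$ and $z$ did not change sign, say $w_1\ge w_2$ on $\R^{N+1}_+$, then Lemma~\ref{lem: Mn to 1} would give
\[
\partial^a_\nu w_2=-Mw_2^p\sum_{j\neq 2}w_j^p\le -M\,w_1^pw_2^p\le -M\,w_2^{2p}\qquad\text{on }\R^N,
\]
and, since $\bw$ has globally bounded $\alpha$-H\"older quotient with $\alpha<2s$, Lemma~\ref{lem: global eigenfunction_s} (applied with power $2p$) would force $w_2$ to be constant, a contradiction. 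Hence $z$ changes sign, and Corollary~\ref{cor: blow-up_aij=1} applies: $z^+$ and $z^-$ are nontrivial, nonnegative, satisfy $L_az^{\pm}\le 0$ in $\R^{N+1}_+$ and $z^{\pm}\partial^a_\nu z^{\pm}\le 0$ on $\R^N$, have disjoint supports in $\overline{\R^{N+1}_+}$, and inherit the bound $|z^{\pm}(X)|\le C(1+|X|^{\alpha})$.

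Now I would run the Alt-Caffarelli-Friedman monotonicity formula exactly as in \cite[Propositions~4 and 5]{tvz2}, but for the pair $(z^+,z^-)$ rather than for two densities. Since these two functions are disjointly supported in the \emph{whole} half-space --- not merely on $\{y=0\}$ --- the spectral quantity governing the formula is the infimum of $\tfrac12\bigl(\gamma(\lambda_1(\omega_1))+\gamma(\lambda_1(\omega_2))\bigr)$ over $\omega_1\cap\omega_2=\emptyset$, which is precisely $\mu$; moreover the differential inequalities $L_az^{\pm}\le0$ and $z^{\pm}\partial^a_\nu z^{\pm}\le0$ are provided by Corollary~\ref{cor: blow-up_aij=1} for \emph{every} $p>0$, so the restriction $p\ge1$ needed in Proposition~\ref{prp: small_exp} is no longer required. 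One obtains that
\[
\Phi(r):=\frac{1}{r^{4\mu}}\prod_{\pm}\int_{B_r^+}\frac{y^a|\nabla z^{\pm}|^2}{|X|^{N-1+a}}\,\de X
\]
is nondecreasing. On the one hand each $z^{\pm}$ is nonconstant --- it vanishes on the nonempty support of the other --- so $\Phi(R)>0$ for $R$ large. On the other hand, combining the growth bound with a Caccioppoli estimate (using again the signs of $L_az^{\pm}$ and of $z^{\pm}\partial^a_\nu z^{\pm}$) yields $\int_{B_r^+}y^a|\nabla z^{\pm}|^2\le Cr^{N-1+a+2\alpha}$, hence, by a dyadic decomposition, $\int_{B_r^+}y^a|\nabla z^{\pm}|^2/|X|^{N-1+a}\le Cr^{2\alpha}$; thus $\Phi(r)\le Cr^{4(\alpha-\mu)}\to0$ as $r\to\infty$. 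Being nonnegative and nondecreasing, $\Phi$ must then vanish identically, a contradiction. Therefore at most one component of $\bw$ is nonconstant, and the remaining $k-1$ components, being constant, are trivial.

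The genuinely new ingredient over \cite{tvz2} is the choice of the pair on which the monotonicity formula is tested: when $a_{ij}\equiv1$ one works with $(w_i-w_j)^{\pm}$, which Corollary~\ref{cor: blow-up_aij=1} exhibits as a pair of disjointly-supported subsolutions in the full half-space; this simultaneously improves the exponent from $\nu$ to $\mu$ and removes the hypothesis $p\ge1$. The main technical obstacle is exactly the one already handled in \cite{tvz2}: establishing the weighted Alt-Caffarelli-Friedman formula with the sharp spectral constant --- the lower bound $\gamma(\lambda_1(\omega_1))+\gamma(\lambda_1(\omega_2))\ge2\mu$ for disjoint spherical caps, the $A_2$-weighted Poincar\'e inequality on such caps, and the sign bookkeeping in the logarithmic derivative of $\Phi$ --- but that machinery can be imported essentially verbatim.
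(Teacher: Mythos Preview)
Your argument is correct and follows the same route as the paper: show that for any pair $i\neq j$ of nontrivial components the difference $z=w_i-w_j$ must change sign, invoke Corollary~\ref{cor: blow-up_aij=1} to obtain the differential inequalities for $z^\pm$, and then reach a contradiction via an Alt--Caffarelli--Friedman monotonicity formula with exponent $\mu$ (which the paper isolates as Lemma~\ref{thm: ACF_s}). One small slip: the chain $\alpha<\mu\le s<\min(2s,1)$ is not valid since $\mu\le s$ fails in general (for instance $\mu(1/2,N)=1$), but this is harmless because $\alpha<\min(2s,1)$ is already part of the standing hypothesis in Proposition~\ref{prp: blow_up}.
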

\begin{remark}\label{rem: mu1/2}
It is immediate to check that $\mu(s,N)\geq\nu(s,N)$ for every $s$, $N$. In particular, it is always 
positive. Were $\mu(s,N)=1$, we would find regularity up to $\alphaopt$ also in the case $k\geq3$. 
As a matter of fact, at the end of this section we will show that $1/2 \leq \mu(s,N) \leq 1$ for 
every $0<s<1$, $N\geq1$. Furthermore, it is proved in \cite{acf,csbook} that
\begin{equation}\label{eqn: sharp_k>2}
\mu\left(\frac12,N\right)=1, \qquad \text{for every }N\geq1.
\end{equation}
\end{remark}
\begin{proof}
We start showing that, for any choice $i\neq j$, if $w_i(\cdot,0) \leq w_j(\cdot, 0)$ then $w_i\equiv0$.
Indeed, if $\bw$ solves the differential system, then
\[
\partial_{\nu}^a w_i \leq - M w_i^p w_j^p \leq - M w_i^{2p},
\]
and the claim follows by Lemma \ref{lem: global eigenfunction_s}; in the case of segregated traces, then
$w_i(\cdot,0)\equiv 0$ and one can conclude by applying Lemma \ref{lem: first collection_s} (to the odd
extension of $w_i$ across $\{y=0\}$).

On the other hand, let us assume by contradiction that, for some $i\neq j$, the functions $z^\pm :=
(w_i - w_j)^\pm$ are both nontrivial. Then they satisfy the inequalities in Corollary \ref{cor: blow-up_aij=1},
and furthermore
\[
|z^\pm (X)|\leq C(1+|X|^\alpha),
\]
where $\alpha<\mu$. Under these assumptions, we can obtain a contradiction by reasoning as in the proof of
Proposition \ref{prp: small_exp}. To this aim, the only missing ingredient is the following
monotonicity formula.
\end{proof}
\begin{lemma}\label{thm: ACF_s}
Let $z_1, z_2 \in H^{1;a}(B_R^+(x_0,0))$ be continuous nonnegative functions such that
\begin{itemize}
   \item  $z_1 z_2 = 0$, $z_i(x_0,0) = 0$;
   \item  for every non negative $\phi \in \C_0^{\infty}(B_R(x_0,0))$,
   \[
   \int\limits_{\R^{N+1}_+}(L_a z_i)z_i\phi \, \de x \de y + \int\limits_{\R^N} (\partial_\nu^a z_i)z_i\phi \,
   \de x = \int\limits_{\R^{N+1}_+}y^a \nabla z_i\cdot\nabla(z_i\phi) \, \de x \de y \leq0.
   \]
\end{itemize}
Then
\[
    \Phi(r) := \prod_{i=1}^{2} \frac{1}{r^{2\mu}} \int\limits_{B_r^+(x_0,0)}
\frac{y^a|\nabla z_i|^2}{|X-(x_0,0)|^{N-2s} } \,\de{x} \de{y}
\]
is monotone non decreasing in $r$ for $r \in (0,R)$, where $\mu$ is defined as in Proposition \ref{prp: equal_aij}.
\end{lemma}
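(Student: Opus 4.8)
The plan is to establish the Alt–Caffarelli–Friedman (ACF) monotonicity formula by reducing it, via the standard optimal-partition estimate on the sphere, to a differential inequality for the functions $r\mapsto \frac{1}{r^{2\mu}}\int_{B_r^+}\frac{y^a|\nabla z_i|^2}{|X-(x_0,0)|^{N-2s}}$. Since the argument is by now classical in this circle of ideas (see \cite{acf,csbook} for the case $a=0$ and \cite[Proposition 4]{tvz2} for the fractional version), I will follow that scheme. Without loss of generality we translate so that $x_0=0$. For each $i=1,2$ set
\[
    \Gamma_i(r) := \frac{1}{r^{2\mu}} \int\limits_{B_r^+} \frac{y^a|\nabla z_i|^2}{|X|^{N-2s}}\,\de x\de y,
\]
so that $\Phi = \Gamma_1\Gamma_2$ and it suffices to show $\Phi'(r)\ge 0$ for a.e. $r$, i.e.
\[
    \frac{\Gamma_1'(r)}{\Gamma_1(r)} + \frac{\Gamma_2'(r)}{\Gamma_2(r)} \geq 0.
\]

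First I would differentiate $\Gamma_i$. Writing $\gamma_i(r)=\int_{B_r^+}\frac{y^a|\nabla z_i|^2}{|X|^{N-2s}}$, a co-area computation gives $\gamma_i'(r) = \int_{\partial^+ B_r^+}\frac{y^a|\nabla z_i|^2}{|X|^{N-2s}}\,\de\sigma$, hence
\[
    \frac{\Gamma_i'(r)}{\Gamma_i(r)} = \frac{\gamma_i'(r)}{\gamma_i(r)} - \frac{2\mu}{r}.
\]
Next, using the equation (the displayed inequality $\int y^a\nabla z_i\cdot\nabla(z_i\phi)\le 0$, tested with a suitable cutoff approximating the characteristic function of $B_r^+$) together with $z_i(0,0)=0$, one obtains a Rellich–Pohozaev type bound relating $\gamma_i(r)$ to the boundary integral of $y^a z_i\partial_\nu z_i$ on $\partial^+ B_r^+$, and then a Cauchy–Schwarz / Poincaré argument on the spherical cap $\S_+^N$ yields the key inequality
\[
    \frac{\gamma_i'(r)}{\gamma_i(r)} \geq \frac{2}{r}\,\gamma\bigl(\lambda_1(\omega_i(r))\bigr),
\]
where $\omega_i(r)\subset\S_+^N$ is (essentially) the trace of the positivity set of $z_i$ on the sphere of radius $r$, and $\gamma(\cdot)$, $\lambda_1(\cdot)$ are as defined before Proposition \ref{prp: equal_aij}. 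This is the step where the hypothesis $z_1 z_2 = 0$ enters: it forces $\omega_1(r)\cap\omega_2(r)=\emptyset$ for a.e.\ $r$, so by the very definition of $\mu$ we get $\gamma(\lambda_1(\omega_1(r)))+\gamma(\lambda_1(\omega_2(r)))\ge 2\mu$. Combining the three displays gives
\[
    \frac{\Gamma_1'(r)}{\Gamma_1(r)} + \frac{\Gamma_2'(r)}{\Gamma_2(r)} \geq \frac{2}{r}\Bigl(\gamma(\lambda_1(\omega_1(r)))+\gamma(\lambda_1(\omega_2(r))) - 2\mu\Bigr) \geq 0,
\]
which is the assertion. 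One has to handle the degenerate cases where some $\Gamma_i(r_0)=0$ on an interval separately, but there $z_i\equiv 0$ near that scale and $\Phi\equiv 0$, so monotonicity is trivial; and the case $\omega_i(r)=\S_+^N$ (i.e.\ $z_i>0$ on the whole cap, so $\lambda_1=0$, $\gamma(0)=0$) is compatible only with the other cap being empty, again making $\Phi=0$.

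The main obstacle I expect is the careful justification of the differential inequality $\gamma_i'/\gamma_i\ge (2/r)\gamma(\lambda_1(\omega_i(r)))$ in the weighted, degenerate-elliptic setting: one must legitimately integrate by parts against the singular weight $|X|^{-(N-2s)}y^a$ despite only having $z_i\in H^{1;a}$ and a \emph{one-sided} inequality (not an equation) for $z_i$, control the boundary term on $\partial^0 B_r^+$ coming from $\partial_\nu^a z_i$ using that $z_i\partial_\nu^a z_i\le 0$ there, and verify that the spherical eigenvalue estimate survives the passage to the $y^a$-weighted Rayleigh quotient on $\S_+^N$. All of this is done in \cite[Proposition 4]{tvz2} for the segregated case and in \cite[Proposition 5]{tvz2} for the system case; since the present hypotheses are exactly the common denominator (segregated traces $z_1z_2=0$ plus the vanishing $z_i(x_0,0)=0$ plus the variational inequality), the argument transfers essentially verbatim, and the only genuinely new point is that $\mu$ — rather than $\nu$ — is the relevant optimal-partition constant here, because the traces are segregated on all of $\S_+^N$ and not merely on $\S_+^N\cap\{y=0\}$.
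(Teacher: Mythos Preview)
Your proposal is correct and follows essentially the same standard Alt--Caffarelli--Friedman scheme that the paper invokes. The paper's own proof is even terser: it just observes that, after extending the $z_i$ evenly across $\{y=0\}$ (which the variational inequality in the hypothesis permits), the statement reduces to the full-ball weighted ACF formula, classical for $a=0$ \cite{acf} and obtained by the same differential-inequality-plus-spherical-eigenvalue argument for general $a$ \cite{csbook,tvz2}; your direct half-ball computation is equivalent to this, and you have correctly identified that the only genuinely new point is that segregation on all of $B_R^+$ (not just on $\{y=0\}$) makes $\mu$ rather than $\nu$ the relevant optimal-partition constant.
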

\begin{proof}
First of all we observe that, up to an even extension of the functions $z_i$ across $\{y=0\}$, the formula above is implied by the analogous one stated on the whole $B_r$. This latter formula, when $s=1/2$,
is nothing but the classical Alt-Caffarelli-Friedman one \cite{acf}. On the other hand, when $s\neq 1/2$,
its proof resemble the usual one, as done for instance in \cite{csbook} (see also \cite[Section 2]{tvz2} for further details).
\end{proof}
To conclude the proof of Theorem \ref{thm: intro_holdk}, we provide the following rough elementary estimate of $\mu(s,N)$ for $s\neq 1/2$.
\begin{lemma}\label{lem: elementary estimate}
For every $0<s<1$ and $N\geq1$ it holds
\[
\mu(s,N)\geq\frac12 .
\]
\end{lemma}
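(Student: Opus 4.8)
The quantity $\mu(s,N)$ is defined as an infimum over pairs of disjoint subdomains $\omega_1,\omega_2\subset\S_+^N$ of the average $\tfrac12\bigl(\gamma(\lambda_1(\omega_1))+\gamma(\lambda_1(\omega_2))\bigr)$. Since $\gamma$ is monotone increasing and the map $\omega\mapsto\lambda_1(\omega)$ is monotone decreasing with respect to inclusion, the infimum is attained (or approached) by taking the two sets as large as possible subject to disjointness, i.e. by an essentially optimal partition of $\S_+^N$ into two pieces. The strategy is therefore purely one of \emph{comparison}: I want to exhibit, for every admissible pair $(\omega_1,\omega_2)$, a lower bound on at least one of the two Dirichlet eigenvalues, and convert it through $\gamma$ into the bound $\mu\ge\tfrac12$.

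First I would recall the link between $\lambda_1(\omega)$ and the characteristic exponent: a positive $L_a$-harmonic function on a cone over $\omega$ homogeneous of degree $\gamma$ forces $\gamma=\gamma(\lambda_1(\omega))$, and $\gamma(t)=\tfrac12$ precisely when $t=\lambda_1^{\mathrm{half}}:=\gamma^{-1}(\tfrac12)$, the eigenvalue corresponding to degree-$1/2$ homogeneity; an explicit computation gives $\gamma^{-1}(1/2)=\tfrac14+\tfrac{N-2s}{2}$ (one just inverts $\sqrt{(\frac{N-2s}{2})^2+t}-\frac{N-2s}{2}=\frac12$). Next, the key geometric input: for \emph{any} partition of $\S_+^N$ into two disjoint sets $\omega_1,\omega_2$, at least one of them, say $\omega_1$, has $|y|^a$-weighted measure at most half of $\int_{\S_+^N}|y|^a$, hence is contained (up to measure zero) in a weighted half-sphere. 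The monotonicity of $\lambda_1$ then yields $\lambda_1(\omega_1)\ge\lambda_1(\text{weighted half of }\S_+^N)$. So it suffices to compute this last eigenvalue and check it is $\ge\gamma^{-1}(1/2)$: if that holds, then $\gamma(\lambda_1(\omega_1))\ge\tfrac12$, while $\gamma(\lambda_1(\omega_2))\ge\gamma(\lambda_1(\S_+^N))=\gamma(0)=0$, and averaging gives $\mu\ge\tfrac12$.

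The main obstacle is identifying ``the weighted half of $\S_+^N$'' and estimating its eigenvalue from below by $\gamma^{-1}(1/2)$, since the weight $|y|^a$ breaks the usual spherical symmetry and the optimal half is not a geodesic half-sphere in the round metric. Here I would use a symmetrization/reflection argument: the natural even extension across $\{y=0\}$ turns the $L_a$-eigenvalue problem on $\S_+^N$ into one on the full sphere $\S^N$ with weight $|y|^a$, and the degree-$1/2$ homogeneous function $\Re\bigl((x_1+i|y|)^{1/2}\bigr)$ (or rather the even-in-$y$ solid harmonic $\mathrm{Re}(x_1+i y)^{1/2}$, extended evenly) is $L_a$-harmonic on $\mathbb R^{N+1}$, vanishes exactly on the weighted half $\{x_1\le0\}\cap\S^N$, and thus realizes $\lambda_1$ of that half-domain as the eigenvalue with characteristic exponent $1/2$. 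By the monotonicity and the half-measure argument above, any admissible $\omega_1$ satisfies $\lambda_1(\omega_1)\ge\lambda_1(\{x_1<0\})=\gamma^{-1}(1/2)$, giving $\gamma(\lambda_1(\omega_1))\ge\tfrac12$ and hence $\mu(s,N)\ge\tfrac12$.

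A couple of routine points I would need to nail down but not belabor: that $\omega\mapsto\lambda_1(\omega)$ is indeed monotone decreasing (immediate, since enlarging $\omega$ enlarges the admissible test-function class in the Rayleigh quotient); that the ``half by weighted measure'' set may be taken, after a rotation, to be contained in $\{x_1<0\}$ up to a null set relevant for the eigenvalue (here one invokes that $\lambda_1$ depends only on $\omega$ up to sets of zero $|y|^a$-capacity, and that among all weighted-half subdomains $\{x_1<0\}$ is the one with the smallest $\lambda_1$ by a rearrangement on the sphere); and that $\gamma$ is strictly increasing so that the conversion $\lambda_1\ge\gamma^{-1}(1/2)\Rightarrow\gamma(\lambda_1)\ge1/2$ is valid. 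The upper bound $\mu(s,N)\le1$ claimed in Remark \ref{rem: mu1/2} is trivial by taking $\omega_1,\omega_2$ to be two small disjoint caps, each with $\lambda_1$ large so that $\gamma\to$ large — actually one takes the complementary halves, getting $\gamma(\lambda_1)=1$ for each when $\omega_i$ are the two geodesic half-spheres, so this lemma concerns only the lower bound.
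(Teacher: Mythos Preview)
Your plan has a genuine gap at several points, and even an arithmetic slip that obscures the target.

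First, the arithmetic: you aim to show $\gamma(\lambda_1(\omega_1))\ge\tfrac12$ and $\gamma(\lambda_1(\omega_2))\ge0$, and conclude ``averaging gives $\mu\ge\tfrac12$''. But $\tfrac12(\tfrac12+0)=\tfrac14$, not $\tfrac12$. To reach $\mu\ge\tfrac12$ you actually need $\gamma(\lambda_1(\omega_1))\ge1$ for the smaller set. This is in fact the correct target: the first Dirichlet eigenfunction on the half-sphere $\{x_1>0\}\cap\S_+^N$ (with Neumann condition on $\{y=0\}$) is the trace of the $L_a$-harmonic, $y$-even function $v(X)=x_1$, which is homogeneous of degree $1$, so $\gamma(\lambda_1(\{x_1>0\}))=1$. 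Your candidate $\Re\bigl((x_1+iy)^{1/2}\bigr)$ does \emph{not} vanish on $\{x_1\le0\}\cap\S^N$; it vanishes only on the codimension-two set $\{x_1\le0,\,y=0\}$, so it is not a Dirichlet eigenfunction of any half-domain inside $\S_+^N$.

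Second, and more seriously, the step ``$\omega_1$ has at most half the weighted measure $\Rightarrow$ $\lambda_1(\omega_1)\ge\lambda_1(\{x_1<0\})$'' is a Faber--Krahn inequality for the weighted operator $-\div(|y|^a\nabla\cdot)$ on the sphere. You label this ``routine'' and invoke ``a rearrangement on the sphere'', but no such result is available off the shelf: Schwarz symmetrization does not straightforwardly respect the anisotropic weight $|y|^a$, and the minimizing shape among sets of given weighted measure is not obviously $\{x_1<0\}$. The preliminary claim that a half-measure set ``is contained (up to measure zero) in a weighted half-sphere'' is simply false (take a thin band around an equator), so the argument genuinely rests on the unproven isoperimetric-type step.

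The paper sidesteps all of this. Instead of comparing $\omega_1$ to an optimal half-domain, it observes that the two orthonormal Dirichlet ground states $\phi_1,\phi_2$ on $\omega_1,\omega_2$ span a two-dimensional subspace of $H^{1;a}(\S_+^N)$, whence the Courant min-max gives
\[
\lambda_2(\S_+^N)\;\le\;\max_\vartheta \mathcal R(\phi_1\cos\vartheta+\phi_2\sin\vartheta)\;\le\;\max\bigl(\lambda_1(\omega_1),\lambda_1(\omega_2)\bigr).
\]
Thus $\max_i\gamma(\lambda_1(\omega_i))\ge\gamma(\lambda_2(\S_+^N))$, and one only needs $\gamma(\lambda_2(\S_+^N))=1$. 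That follows from a short Liouville argument: the second Neumann eigenfunction extends evenly to an $L_a$-harmonic homogeneous function on $\R^{N+1}$ of degree $\gamma(\lambda_2)$, which by Lemma~\ref{lem: first collection_s} must be a degree-one polynomial, namely $x_1$ after rotation. This min-max route requires no symmetrization and no Faber--Krahn, and it is the missing idea in your proposal.
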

\begin{proof}
By trivial extension to higher dimensions of the eigenfunctions involved, it is easy to prove
that $\mu$ is decreasing with respect to $N$, thus we can assume $N\geq2$.

Let $\omega_1,\omega_2\subset\S^N_+$, $\omega_1\cap\omega_2=\emptyset$, and let $\phi_i\in H^{1;a}(\S^N_+)$
be the first eigenfunction associated to $\lambda_1(\omega_i)$ enjoying the normalization
\[
    \int_{\S^{N}} |y|^a \phi_i^2 \mathrm{d}\sigma = 1 \qquad \text{for $i =1, 2$}.
\]
If $\mathcal{R}$ denotes the Rayleigh quotient associated to $\lambda_1$, then we have that
\[
\begin{split}
\lambda_2(\S^N_+) &:= \inf_{\substack{V\subset H^{1;a}(\S^N_+) \\ \dim V\geq 2}}\max_{V} \mathcal{R}
 \leq \max_{\vartheta} \mathcal{R}\left(\phi_1 \cos\vartheta + \phi_2\sin \vartheta \right) \leq
 \max(\lambda_1(\omega_1),\lambda_1(\omega_2)).
\end{split}
\]
By monotonicity of $\gamma$ we  obtain that
\[
\mu(s,N)= \inf_{\omega_1\cap\omega_2=\emptyset} \frac{\gamma(\lambda_1(\omega_1))+\gamma(\lambda_1(\omega_2))}{2}
\geq \frac{1}{2}\gamma(\lambda_2(\S^N_+)).
\]
To conclude the proof, we show that $\gamma(\lambda_2(\S^N_+))=1$. Indeed, let $\psi_2$ be a second eigenfunction. Then its conormal derivative on $\partial \S^N_+$ is identically zero, and it can be extended in an even way across $\{y=0\}$
to an eigenfunction of $\S^N$. Moreover, by the well known properties of $\gamma$, we have that the function
\[
v(X)= |X|^{\gamma(\lambda_2(\S^N_+))} \psi_2\left(\frac{X}{|X|}\right)
\]
is $L_a$-harmonic up to $0$ (this is true, actually, because we are assuming $N\geq2$), is $y$-even, and has bounded growth. By Lemma \ref{lem: first collection_s} we deduce that, up to a rotation in the $x$ plane, $v=x_1$, concluding the proof.
\end{proof}
\begin{proof}[Proof of Theorem \ref{thm: intro_holdk}]
The uniform H\"older bounds with exponent $\alpha^*$ are obtained by combining Lemma \ref{lem: 3} with either Proposition \ref{prp: small_exp} (with $\alpha^*=\min(2s,\nu(s,N))$) or Proposition \ref{prp: equal_aij} (with $\alpha^*=\min(2s,\mu(s,N))$), respectively. In the second case, the exact value of $\alpha^*$ is provided by Remark \ref{rem: mu1/2} when $s=1/2$, and by Lemma \ref{lem: elementary estimate} when $s<1/4$.
\end{proof}
\begin{remark}
By comparison with the nodal partition of $\S^N_+$ associated to the homogeneous, $L_a$-harmonic function $v(x,y)=x_1$, we infer that
\[
\mu(s,N) \leq 1.
\]
\end{remark}

\section{Further properties of the segregation profiles}\label{sec: lim_prof}
In this last section we deal with the proof of Theorems \ref{thm: intro_lip2} and 
\ref{thm: intro_lastholdk}. Together
with the previous assumptions, in what follows we further suppose that the reaction terms 
$f_{i,\beta}\to f_i$ as $\beta\to\infty$, uniformly on compact sets, with $f_i$ Lipschitz 
continuous.

As a result of the previous sections, we have shown that $L^{\infty}$ uniform bounds on a family of 
solutions to the problem $\problem{\beta}$ is enough to ensure equicontinuity of the family 
independently from the competition parameter $\beta$. Reasoning as in the proof of Lemmas \ref{lem: 
uniform implies strong convergence local_s} and \ref{lem: Mn to infty} we deduce the following result.
\begin{proposition}\label{prp: segr_prop}
Any sequence  $\{\bv_{\beta_n}\}_{n \in \N}$, $\beta_n \to \infty$, of solutions to $\problem{\beta}$ which is uniformly bounded in $L^{\infty}(B^+)$ admits a subsequence which converges to a limiting profile
$\bv \in (H^{1;a}\cap \C^{0,\alpha})_{\loc}(B^+)$, for some $\alpha > 0$. Moreover
\begin{equation}\label{eqn: segr prop}
    \begin{cases}
        L_a v_i = 0 &\text{in $B^+$},\medskip\\
        \partial_\nu^a v_i \leq f_{i}(x, v_1, \dots, v_k)\smallskip\\
        \partial_\nu^a\left( v_i-\sum_{j \neq i} \frac{a_{ij}}{a_{ji}} v_j\right) \geq f_i-\sum_{j \neq i} \frac{a_{ij}}{a_{ji}} f_j &\text{on $\partial^0 B^+$},\smallskip\\
        v_i \cdot \left[ \partial_\nu^a\left( v_i-\sum_{j \neq i} \frac{a_{ij}}{a_{ji}} v_j\right) - f_i +\sum_{j \neq i} \frac{a_{ij}}{a_{ji}} f_j\right] = 0
    \end{cases}
\end{equation}
and $v_i(x,0)\cdot v_j(x,0) \equiv 0$ for every $j\neq i$.
\end{proposition}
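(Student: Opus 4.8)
The plan is to mimic the proofs of Lemmas \ref{lem: uniform implies strong convergence local_s} and \ref{lem: Mn to infty}, with the competition parameter $\beta_n$ now playing the role of the diverging sequence $M_n$ from the blow-up analysis. As a first step I would invoke Theorem \ref{thm: intro_hold2} (if $k=2$) or Theorem \ref{thm: intro_holdk} (if $k\geq3$), together with the remark that $B^+_{1/2}$ may be replaced by any $\Omega\cap\{y>0\}$ with $\overline\Omega\subset B_1$, to get a uniform $\C^{0,\alpha}$ bound on every compact subset of $B^+$, for some $\alpha>0$; by Ascoli--Arzel\`a a subsequence then converges locally uniformly to a (necessarily nonnegative) limit $\bv\in\C^{0,\alpha}_{\loc}(B^+)$. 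Testing $L_a v_{i,\beta_n}=0$ against cut-offs yields uniform $H^{1;a}$ bounds on compacts, hence weak $H^{1;a}_{\loc}$ convergence, and the passage to strong convergence is carried out exactly as in Lemma \ref{lem: uniform implies strong convergence local_s} (equivalently \cite[Lemma 6.13]{tvz1}); in particular each $v_i$ is $L_a$-harmonic in $B^+$.

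Next I would pass to the limit in the Steklov boundary condition. For any nonnegative $\phi\in\C^\infty_0(B_1)$ the weak form of $\problem{\beta_n}$ reads
\[
\int_{B^+}y^a\nabla v_{i,\beta_n}\cdot\nabla\phi = \int_{\partial^0 B^+}\Big(f_{i,\beta_n}(x,\bv_{\beta_n}) - \beta_n v_{i,\beta_n}^p\tsum_{j\neq i}a_{ij}v_{j,\beta_n}^p\Big)\phi \le \int_{\partial^0 B^+}f_{i,\beta_n}(x,\bv_{\beta_n})\phi .
\]
Since the left-hand side converges (strong $H^{1;a}_{\loc}$) and $f_{i,\beta_n}(x,\bv_{\beta_n})\to f_i(x,\bv)$ uniformly on compacts, in the limit $\partial_\nu^a v_i\le f_i(x,\bv)$ on $\partial^0 B^+$. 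Conversely, the same identity shows that $\beta_n\int_{\partial^0 B^+}v_{i,\beta_n}^p\sum_{j\neq i}a_{ij}v_{j,\beta_n}^p\phi$ is bounded uniformly in $n$; as $\beta_n\to\infty$ this forces $v_i v_j\equiv0$ on $\partial^0 B^+$ for every $j\neq i$. For the second inequality I subtract the boundary conditions and check that the diverging contributions reorganize as
\[
\partial_\nu^a\Big(v_{i,\beta_n}-\tsum_{j\neq i}\tfrac{a_{ij}}{a_{ji}}v_{j,\beta_n}\Big) = f_{i,\beta_n} - \tsum_{j\neq i}\tfrac{a_{ij}}{a_{ji}}f_{j,\beta_n} + \beta_n\tsum_{\substack{j\neq i\\ h\neq i,j}}\tfrac{a_{ij}a_{jh}}{a_{ji}}v_{j,\beta_n}^p v_{h,\beta_n}^p ,
\]
the terms $\beta_n a_{ij}v_{i,\beta_n}^p v_{j,\beta_n}^p$ cancelling pairwise; since the remaining sum is nonnegative, passing to the limit (weak-$\ast$ convergence of the conormal derivatives as $\R^N$-measures against the uniformly convergent coefficients) gives the stated inequality.

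For the complementarity relation I multiply the last displayed identity by $v_{i,\beta_n}\ge0$. All terms converge as before, except $\beta_n\sum_{j\neq i,\,h\neq i,j}\tfrac{a_{ij}a_{jh}}{a_{ji}}v_{i,\beta_n}v_{j,\beta_n}^p v_{h,\beta_n}^p$, which I claim tends to $0$ in $L^1_{\loc}(\R^N)$. This is the one genuinely technical point, and it is handled verbatim as in Lemma \ref{lem: Mn to infty}: for fixed $\eps>0$ one splits a compact $K$ according to $\supp_i^\eps=\{x:v_i(x,0)\ge\eps\}$; outside this set $v_{i,\beta_n}\le2\eps$ eventually, so the term is controlled by $2\eps$ times the competition bound just obtained, while inside it $v_{i,\beta_n}\ge\eps/2$ lets Lemma \ref{lem: decay with perturbations_s} bound both $v_{j,\beta_n}^p$ and $v_{h,\beta_n}^p$ by $C/(\beta_n\eps^p)$; choosing $n$ so large that $\eps^{-2p}\le\eps\beta_n$ and then letting $\eps\to0$ concludes. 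When $k=2$ there are no such triples and the identity is immediate, since the $\beta_n$-terms cancel entirely. Thus the proof is not conceptually difficult, the main work being to transcribe the $\beta_n$-weighted estimates from the blow-up setting and to verify that the weak convergence of the conormal derivatives, understood as $\R^N$-measures, survives multiplication by the locally uniformly convergent traces $v_{i,\beta_n}$.
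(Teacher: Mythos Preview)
Your proposal is correct and follows essentially the same approach as the paper: the paper's proof consists entirely of the sentence ``Reasoning as in the proof of Lemmas \ref{lem: uniform implies strong convergence local_s} and \ref{lem: Mn to infty} we deduce the following result,'' and you have accurately unpacked how those arguments transfer from the blow-up setting (with $M_n\to\infty$) to the original setting (with $\beta_n\to\infty$). In particular, your verification of the cancellation of the $\beta_n a_{ij} v_{i,\beta_n}^p v_{j,\beta_n}^p$ terms, the resulting nonnegative remainder $\beta_n\sum_{j\neq i,\,h\neq i,j}\tfrac{a_{ij}a_{jh}}{a_{ji}}v_{j,\beta_n}^p v_{h,\beta_n}^p$, and the $\supp_i^\eps$ splitting combined with Lemma~\ref{lem: decay with perturbations_s} to handle the triple products, all reproduce the paper's intended argument.
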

%
After Proposition \ref{prp: segr_prop}, the optimal regularity for the case of two densities is almost
straightforward.
\begin{proof}[Proof of Theorem \ref{thm: intro_lip2}]
For a limiting profile $\bv=(v_1,v_2)$, let $w=a_{21}v_1 - a_{12}v_2$. Then Proposition \ref{prp: segr_prop}
implies that
\[
v_1(x,0)=\frac{1}{a_{21}}w^+,\qquad v_2(x,0)=\frac{1}{a_{12}}w^-,
\]
and
\[
    \begin{cases}
        L_a w = 0 &\text{in $B^+$},\\
        \partial_\nu^a w = g(w) &\text{on $\partial^0 B^+$},
    \end{cases}
\]
where
\[
    g(x,t) := a_{21} f_1\left(x, \frac{1}{a_{21}}t^+, \frac{1}{a_{12}}t^-\right) - 
    a_{12}f_2\left(x, \frac{1}{a_{21}}t^+, \frac{1}{a_{12}}t^-\right)
\]
is Lipschitz continuous. As a consequence, standard regularity (e.g. \cite[Proposition 2.8]{silve}, \cite[Lemma 2.1]{dipierro_Symmetry}) applies, providing that $w\in \C^{1,\alpha}$ and thus $u_1$, $u_2$ are Lipschitz continuous.
\end{proof}
\begin{remark}
An important consequence of the argument above is that whenever there are only two species that are segregated, under
suitable growth conditions about $f_1$, $f_2$ the corresponding free boundary
\[
    \Gamma := \{x \in \partial^0B: v_1(x,0) = v_2(x,0) = 0\}
\]
is a closed set of empty interior (in the $N$ dimensional topology). Indeed $w=a_{21}v_1 - a_{12}v_2$ satisfies a semilinear equation for which unique continuation holds, see \cite[Theorems 1.4, 4.1]{fafe}.
\end{remark}

We are left to deal with the case $k\geq3$ for the half-laplacian, i.e.
\[
s=\frac12.
\]
In this case, by Theorem \ref{thm: intro_holdk} we already know that, when $a_{ij}=1$, the traces 
of the limiting profiles enjoy almost Lipschitz continuity on $K\cap\{y\geq0\}$, for every compact 
$K\subset B$. We are going to show 
that the same holds also for general $a_{ij}$, when there are no internal reaction terms in a 
neighborhood of the free boundary. More precisely, we assume
that the Lipschitz continuous functions $f_i$ are such that
\[
f_i(x,t_1, \dots, t_k) \equiv 0\quad\text{ whenever }|(t_1,\dots, t_k)| < \theta,
\]
for some $\theta > 0$ (such assumption can be weakened, but we prefer to avoid further 
technicalities at this point). Finally, $K\subset B$ will denote a fixed compact set.
\begin{remark}\label{rem: red to bdd}
As before, since the components of a limiting profile $\bv$ are harmonic on $B^+$, we have that its regularity on $K$ is directly connected to the regularity of the same function in $K\cap\{0\leq y<\eps\}$ for arbitrarily small $\eps>0$.
\end{remark}
\begin{definition}\label{def: hat}
For any function $\bv \in H^1 \cap \C(B^+; \R^k)$ which satisfies \eqref{eqn: segr prop} (with $s = 1/2$), we let $\hat \bv := (\hat v_1, \dots, \hat v_k)$ where
\[
    \hat v_i(x,y) = v_i(x,y) - \sum_{j \neq i} \frac{a_{ij}}{a_{ji}} v_j(x,y).
\]
\end{definition}
To clarify the effect of the segregation condition, we introduce the definition of multiplicity of boundary points.
\begin{definition}
We define the multiplicity of a point $x \in \partial^0 B^+$  as
\[
    m(x) := \sharp \left\{i : \mathcal{H}^N(\{v_i(x,0) > 0\} \cap \partial^0 B_r(x,0)) > 0. 
    \forall r > 0 \right\}.
\]
\end{definition}
%
We start with a result about the regularity of low multiplicity points.
\begin{lemma}\label{lem: mulitplicity 1}
If $K\cap\{y=0\} \subset \{x : m(x) \leq 1\}$  
then $\bv \in \C^{1,1/2}(K\cap\{y\geq0\})$.
\end{lemma}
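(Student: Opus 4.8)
The plan is to reduce the statement to a situation where the conormal derivative of some scalar function is bounded, so that classical boundary regularity for $L_{1/2}=(-\Delta)$ applies. Since $K\cap\{y=0\}\subset\{m(x)\le 1\}$, at each boundary point of $K$ at most one density is ``active''. The idea is a standard covering/localization argument: fix a point $x_0\in K\cap\{y=0\}$. Either $m(x_0)=0$, meaning all $v_i$ vanish in a neighborhood of $x_0$ on the trace, so that $\bv\equiv 0$ near $x_0$ by the segregation relations and harmonicity, and regularity is trivial; or $m(x_0)=1$, and there is exactly one index $i=i(x_0)$ with $v_i(\cdot,0)>0$ on a set of positive measure in every ball around $x_0$, while every $v_j$ with $j\ne i$ has $v_j(\cdot,0)\equiv 0$ in a neighborhood $B_r(x_0,0)$ (here one uses that the union of the supports of the $v_j$, $j\ne i$, together with $\{v_i(\cdot,0)=0\}$, cannot cover a full neighborhood unless $v_i\equiv 0$ there; this is where the multiplicity hypothesis is used).

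Once we are in the second case, on $B_r^+(x_0,0)$ all the functions $v_j$ with $j\ne i$ are $L_a$-harmonic with vanishing trace and vanishing conormal derivative on $\partial^0 B_r^+$ (the conormal inequality from Proposition~\ref{prp: segr_prop} together with $v_j\equiv 0$ on the trace forces $\partial_\nu^a v_j\le 0$, and testing as in the comparison principle, or directly extending oddly and using Lemma~\ref{lem: first collection_s}, gives $v_j\equiv 0$ on the whole half-ball). Hence near $x_0$ only $v_i$ survives, and its trace is nonnegative. Consider then $\hat v_i = v_i - \sum_{j\ne i}\frac{a_{ij}}{a_{ji}}v_j = v_i$ on $B_r^+(x_0,0)$. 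The complementarity condition in Proposition~\ref{prp: segr_prop} reads $v_i\bigl[\partial_\nu^a \hat v_i - f_i\bigr]=0$, and on $\{v_i(\cdot,0)>0\}$ this gives $\partial_\nu^a v_i = f_i(x,v_1,\dots,v_k)$, while on $\{v_i(\cdot,0)=0\}$ we have $\partial_\nu^a v_i\le 0$; but $v_i\equiv 0$ on the trace there together with the other inequality $\partial_\nu^a\hat v_i\ge f_i$ (which is $\partial_\nu^a v_i\ge f_i$ there) and the assumption $f_i=0$ near the free boundary (since $|(v_1,\dots,v_k)|<\theta$ on the zero set, by continuity on a possibly smaller ball) pins down $\partial_\nu^a v_i=0$ on $\{v_i(\cdot,0)=0\}\cap B_\rho^0$. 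Therefore $v_i$ solves $L_a v_i=0$ in $B_\rho^+$ with $\partial_\nu^a v_i = g(x)$ on $\partial^0 B_\rho^+$, where $g(x):=f_i(x,v_i(x,0),0,\dots,0)\mathbf 1_{\{v_i(\cdot,0)>0\}}$ is bounded and, thanks to the assumption that $f_i$ vanishes for small arguments, actually continuous (indeed $g=0$ in a neighborhood of the free boundary within $B_\rho^0$, and away from the free boundary it coincides with a Lipschitz function of the continuous trace of $v_i$).

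With $s=1/2$ we have $L_a=-\Delta$ and $a=0$, so the Steklov problem becomes $-\Delta v_i=0$ in $B_\rho^+$, $-\partial_y v_i = g$ on $\partial^0 B_\rho^+$ with $g$ bounded and continuous; standard boundary Schauder/elliptic regularity (the same references invoked in the proof of Theorem~\ref{thm: intro_lip2}, e.g.\ \cite[Proposition 2.8]{silve}, \cite[Lemma 2.1]{dipierro_Symmetry}) yields $v_i\in\C^{1,\gamma}$ near $x_0$ for some $\gamma$; bootstrapping with the Lipschitz dependence of $f_i$ on its arguments improves this to $v_i\in\C^{1,1/2}$ in a neighborhood of $x_0$ in $\{y\ge 0\}$, while the $v_j$, $j\ne i$, vanish there. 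Covering $K\cap\{y=0\}$ by finitely many such neighborhoods and recalling Remark~\ref{rem: red to bdd} (interior regularity for harmonic functions away from $y=0$), we conclude $\bv\in\C^{1,1/2}(K\cap\{y\ge0\})$. The main obstacle is the bookkeeping in the first paragraph: carefully showing that $m(x_0)\le 1$ forces \emph{exactly one} density to be locally nonzero with all others identically vanishing on a genuine neighborhood (not merely on the complement of a measure-zero set), so that the problem genuinely decouples into a single scalar Steklov equation with no competition term; once that local reduction is in place, the regularity is routine.
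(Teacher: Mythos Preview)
Your argument has a genuine gap at the step where you claim that each $v_j$ with $j\neq i$ vanishes identically on the half-ball $B_r^+(x_0,0)$. From $v_j|_{\partial^0 B_r^+}\equiv 0$, harmonicity, nonnegativity and $\partial_\nu v_j\le 0$ you cannot conclude $v_j\equiv 0$: Lemma~\ref{lem: first collection_s} is a \emph{global} Liouville theorem and says nothing about functions defined only on a ball, while ``testing as in the comparison principle'' produces a boundary term on $\partial^+ B_r^+$ over which you have no control. The local model $v_j(x,y)=y$ is harmonic and nonnegative, has zero trace, and satisfies $\partial_\nu v_j=-1\le 0$; nothing in Proposition~\ref{prp: segr_prop} excludes such behaviour for the other densities inside the half-ball.

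This matters because your reduction to a scalar Neumann problem hinges on the identification $\hat v_i=v_i$ \emph{as functions on $B_r^+$}, which you then use to convert $\partial_\nu^a\hat v_i\ge f_i$ into $\partial_\nu^a v_i\ge 0$ on $\{v_i(\cdot,0)=0\}$ and pin down $\partial_\nu^a v_i=0$ there. In fact $\hat v_i$ and $v_i$ agree only on the trace; their conormal derivatives differ by $-\sum_{j\neq i}(a_{ij}/a_{ji})\partial_\nu v_j$, which is nonnegative but in general strictly positive. Hence you only obtain $\partial_\nu\hat v_i\ge 0$ on the contact set, not an equality, and the problem does \emph{not} reduce to a plain Neumann equation. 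The paper instead works with $\hat v_i$ throughout and distinguishes two subcases: if $v_i(x_0,0)>0$ then $\hat v_i$ solves a semilinear Steklov problem; if $v_i(x_0,0)=0$ then $\hat v_i$ is a solution of the thin obstacle (Signorini) problem $\hat v_i|_{y=0}\ge 0$, $\partial_\nu\hat v_i\ge 0$, $\hat v_i\,\partial_\nu\hat v_i=0$, and the sharp $\C^{1,1/2}$ regularity is obtained from \cite[Theorem~5]{atcaf}. Regularity of the remaining $v_j$ then follows because each has vanishing trace and extends by odd reflection to a harmonic function on the full ball, hence is smooth.
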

\begin{proof}
According to Remark \ref{rem: red to bdd}, we will show local regularity of the functions in 
$B^+_r(x_0,0)$, where $r$ is small and $m(x_0)\leq 1$. We have three possibilities.

\emph{Case 1: $m(x_0) = 0$.} in this case, $\bv|_{\partial^0 B_r(x_0,0)} \equiv 0$, and the result is standard.

\emph{Case 2: $m(x_0) = 1$ and $v_i(x_0,0) > 0$.} By continuity of $v_i$, we can assume that $v_i|_{\partial^0 B_r(x_0,0)} > 0$, while by the segregation condition $v_j|_{\partial^0 B_r(x_0,0)} \equiv 0$ for every $j \neq i$. Let $\hat\bv$ be as in Definition \ref{def: hat}. Since in this case $\hat v_i = v_i$ on $\partial^0 B_r(x_0,0)$, it follows from \eqref{eqn: segr prop} that
\[
    \begin{cases}
        - \Delta \hat v_i = 0 &\text{ in $B_r^+(x_0,0)$}\\
        \partial_\nu \hat v_i = f(x,0, \dots, \hat v_i, \dots, 0) &\text{ in $\partial^ 0B_r^+(x_0,0)$}.
    \end{cases}
\]
The regularity of $\hat v_i$ (and thus of $v_i$) follows by the well established regularity theory of the semilinear Steklov problem.

\emph{Case 3: $m(x_0) = 1$ and the non trivial function $v_i$ is such that $v_i(x_0,0) = 0$}. Also in this case we can assume $v_j|_{\partial^0 B_r(x_0,0)} \equiv 0$ for $j \neq i$ and, as before, $v_i = \hat v_i \geq 0$ on $\partial^0 B_r(x_0,0)$. By continuity of $v_i$, we can also assume that $f_i = 0$ in $\partial^0 B_r(x_0,0)$. It follows that
\[
    \begin{cases}
        - \Delta \hat v_i = 0 &\text{ in $B_r(x_0,0)$}\\
        \partial_\nu \hat v_i = 0 &\text{ in $\partial^ 0B_r(x_0,0) \cap \{\hat v_i|_{y = 0} > 0$\}}\\
        \partial_\nu \hat v_i \geq 0 &\text{ in $\partial^ 0B_r(x_0,0)$}.
    \end{cases}
\]
As a consequence, $\hat v_i$ is a solution to the zero thin obstacle problem, for which $\C^{1,1/2}$ regularity (up to the obstacle) has been obtained in \cite[Theorem 5]{atcaf}.
\end{proof}
\begin{remark}
The analogous of the previous lemma holds true also when $s \neq 1/2$, in which
case $\C^{1,s}$ regularity can be shown, as a consequence of \cite{css}.
\end{remark}
Now, for $X\in B$, we introduce the the Morrey quotient associated to $\bv$ as
\[
    \Phi(X,r) := \frac{1}{r^{N+1-2\eps}}\int\limits_{B_r(X)\cap B^+}\sum_{i=1}^{k} |\nabla v_i|^2 \; \de x \de y.
\]
It is well known that if $\Phi$ is uniformly bounded for any $X\in K\cap\{y\geq0\}$ and 
$r<\dist(K,\partial^+B^+)$, then $\bv$ is H\"older continuous of exponent $1-\eps$ in 
$K\cap\{y\geq0\}$. Thus the proof of Theorem \ref{thm: intro_lastholdk} is based on the 
contradictory assumption that, for some $\eps>0$, there is a sequence $\{(X_n, r_n)\}_n$ such 
that $X_n \in K$, $r_n > 0$ and
\begin{equation}\label{eqn: contr_ass}
    \lim_{n \to \infty} \Phi(X_n, r_n) = \infty.
\end{equation}
To reach a contradiction we will use the following technical lemma.
\begin{lemma}[{\cite[Lemma 8.2]{ctvVariational}}]\label{lem: technical ctv}
Let $\Omega \subset \R^{N+1}$ and $v \in H^1(\Omega)$ and let
\[
    \Phi(X,r) := \frac{1}{r^{N+1-2\eps}}\int_{B_r(X) \cap \Omega} |\nabla v|^2 \de x \de y.
\]
If $(X_n,r_n) \subset \overline{\Omega} \times \R^+$ is a sequence such that $\Phi(X_n,r_n) \to \infty$, then $r_n \to 0$ and
\begin{enumerate}
    \item there exists $\{r_n'\} \subset \R^+$ such that $\phi(X_n, r_n') \to \infty$ and
       \begin{equation}\label{eqn: trace bounded}
       \int_{\partial B_{r'_n}(X_n) \cap \Omega} |\nabla v|^2 \leq \frac{N+1-2\eps}{r'_n}   
       \int_{B_{r'_n}(X_n) \cap \Omega} |\nabla v|^2;
       \end{equation}
    \item if $A \subset \overline{\Omega}$ and
    \[
        \dist(X_n, A) \leq C r_n
    \]
    then there exists a sequence $\{(X'_n,r'_n)\}$ such that $\phi(X'_n,r'_n) \to \infty$ and $X'_n \in A$ for every $n$.
\end{enumerate}
\end{lemma}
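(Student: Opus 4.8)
The plan is to reduce everything to the single quantitative input that $v\in H^1(\Omega)$, so that $E:=\int_\Omega|\nabla v|^2<\infty$ and, since $B_r(X)\cap\Omega\subseteq\Omega$ always, $\Phi(X,r)\le E\,r^{-(N+1-2\eps)}$ for every $X$ and $r$. I would first dispatch $r_n\to0$: if not, then along a subsequence either $r_n\ge\delta>0$, whence $\Phi(X_n,r_n)\le E\,\delta^{-(N+1-2\eps)}$ is bounded (here $N+1-2\eps>0$), or $r_n\to\infty$, whence $\Phi(X_n,r_n)\le E\,r_n^{-(N+1-2\eps)}\to0$; either alternative contradicts $\Phi(X_n,r_n)\to\infty$.

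For assertion (1) I would fix $n$, set $g(\rho):=\int_{B_\rho(X_n)\cap\Omega}|\nabla v|^2$ and $F(\rho):=\rho^{-(N+1-2\eps)}g(\rho)=\Phi(X_n,\rho)$, and observe that $g$ is non-decreasing and locally absolutely continuous with $g'(\rho)=\int_{\partial B_\rho(X_n)\cap\Omega}|\nabla v|^2$ for a.e.\ $\rho$ (coarea). Hence $F$ is continuous and, a.e.,
\[
F'(\rho)=\rho^{-(N+1-2\eps)}\Bigl(g'(\rho)-\tfrac{N+1-2\eps}{\rho}\,g(\rho)\Bigr),
\]
so that \eqref{eqn: trace bounded} at radius $\rho$ says precisely $F'(\rho)\le0$. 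The key step is then a selection argument exploiting that $F(r_n)\to\infty$ while $F(\rho)\le E\,\rho^{-(N+1-2\eps)}\to0$ as $\rho\to\infty$: the set $\{\rho\ge r_n:F(\rho)\ge F(r_n)\}$ is bounded, I let $\beta_n$ be its supremum, and by continuity $F(\beta_n)=F(r_n)$ and $F(\rho)<F(r_n)$ for $\rho>\beta_n$. Then $\int_{\beta_n}^{\rho}F'=F(\rho)-F(\beta_n)<0$ for every $\rho>\beta_n$, so $\{F'\le0\}$ has positive measure in every right neighbourhood of $\beta_n$; I would pick $r_n'$ in such a set, close enough to $\beta_n$ that in addition $F(r_n')\ge F(r_n)/2$ and that $r_n'$ is a radius at which the coarea identity holds. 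This yields $\Phi(X_n,r_n')\ge\Phi(X_n,r_n)/2\to\infty$ together with \eqref{eqn: trace bounded} at $r_n'$.

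Assertion (2) I expect to be routine, by ball inclusion: given $\dist(X_n,A)\le Cr_n$ with $A\subseteq\overline\Omega$, choose $X_n'\in A$ with $|X_n'-X_n|\le(C+1)r_n$ and put $r_n':=(C+2)r_n$, so $B_{r_n}(X_n)\subseteq B_{r_n'}(X_n')$; then
\[
\Phi(X_n',r_n')=\frac{1}{(C+2)^{N+1-2\eps}\,r_n^{N+1-2\eps}}\int_{B_{r_n'}(X_n')\cap\Omega}|\nabla v|^2\ \ge\ \frac{\Phi(X_n,r_n)}{(C+2)^{N+1-2\eps}}\longrightarrow\infty ,
\]
with $X_n'\in A$ for all $n$.

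The main obstacle is the selection in assertion (1): because $X_n$ may be a bad point for $v$, the Morrey quotient $\Phi(X_n,\cdot)$ need not decay as $\rho\to0^+$, so one cannot simply take $r_n'$ to be a global maximizer. The fix is to restrict attention to $[r_n,\infty)$ — where decay at infinity is guaranteed by $v\in H^1(\Omega)$ — and to choose $r_n'$ just past the last radius $\beta_n$ at which $\Phi(X_n,\cdot)$ still equals $\Phi(X_n,r_n)$, which forces both the trace inequality and the divergence of $\Phi(X_n,r_n')$.
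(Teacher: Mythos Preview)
The paper does not give its own proof of this lemma: it is quoted verbatim from \cite[Lemma 8.2]{ctvVariational} and used as a black box in the proof of Theorem \ref{thm: intro_lastholdk}. So there is no proof in the paper to compare against.

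That said, your argument is correct. The observation $\Phi(X,r)\le E\,r^{-(N+1-2\eps)}$ with $E=\|\nabla v\|_{L^2(\Omega)}^2$ immediately forces $r_n\to0$ (your case split ``$r_n\ge\delta$ or $r_n\to\infty$'' is harmless but redundant: the second alternative is contained in the first). For (1), your selection of $r_n'$ just past the last time $\beta_n$ at which $F=\Phi(X_n,\cdot)$ still reaches the level $F(r_n)$ is exactly the right idea; the only point worth making explicit is that $F$ is absolutely continuous on compact subintervals of $(0,\infty)$ (it is the product of the AC functions $\rho^{-(N+1-2\eps)}$ and $g(\rho)$), so that $\int_{\beta_n}^{\rho}F'=F(\rho)-F(\beta_n)$ is legitimate and the a.e.\ derivative identity via coarea is available. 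Continuity of $F$ then gives both $F(\beta_n)=F(r_n)$ and the existence of a small right neighbourhood on which $F\ge F(r_n)/2$, and the negativity of the integral of $F'$ on every such neighbourhood lets you pick $r_n'$ with $F'(r_n')\le0$ at a good radius for coarea. Assertion (2) is indeed routine ball inclusion, and your estimate is clean.
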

\begin{proof}[Proof of Theorem \ref{thm: intro_lastholdk}]
Using the second point of Lemma \ref{lem: technical ctv}, together with Remark 
\ref{rem: red to bdd}, we can assume 
without loss of generality that the contradictory assumption \eqref{eqn: contr_ass} holds for 
$\partial^0B^+ \ni X_n =: (x_n,0)$, for every $n$. Lemma \ref{lem: technical ctv} also implies that 
$r_n \to 0$, and 
that we can assume estimate \eqref{eqn: trace bounded} to hold for any $n$, with 
$\Omega=\{y>0\}$. 
Furthermore, since $\bv \in H^1(B^+)$, the function $r \mapsto \Phi((x_n,0), r)$ is continuous for 
$r > 0$ and it is uniformly bounded for $r$ faraway from $0$: as a consequence we can assume that
\[
    \Phi((x_n,0), r) \leq C \Phi((x_n,0), r_n) \qquad \forall r_n < r < \dist(K, \partial^+ B^+),
\]
for some constat $C$. Finally, by Lemmas \ref{lem: mulitplicity 1} and \ref{lem: technical ctv} 
we can assume $m(x_n) \geq 2$ for $n$ large, and thus $f_i(\cdot,v_i) \equiv 0$ on 
$B^+_{r_n}((x_n,0))$. 

Let us introduce a sequence of scaled function $\bv_n$ defined as 
\[
    v_{i,n}(X) := \frac{1}{\Phi((x_n,0),r_n)^{1/2} r_n^{1-\eps}} v_i((x_n,0) + r_n X) \qquad \text{ for $X \in B$}.
\]
By assumptions, $\|\nabla \bv_n\|_{L^2(B^+)}=1$ for every $n$, and
\begin{equation}\label{eqn: morrey bounded}
    \frac{1}{r^{N+1-2\eps}} \int_{B^+_r} \sum_{i=1}^{k} |\nabla v_{i,n}|^2 \leq C \qquad \forall 1 <r < r_n^{-1} \dist(K, \partial^+ B^+).
\end{equation}
We divide the rest of the proof in a number of steps.

\emph{Step 1: also $\|\bv_n\|_{L^2(B^+)}$ is uniformly bounded.} We argue by contradiction, 
assuming that $\|\bv_n\|_{L^2(B^+)} \to \infty$. Letting
\[
    \bu_n := \|\bv_n\|_{L^2(B^+)}^{-1} \bv_n
\]
we have that $\|\bu_n\|_{L^2(B^+)} =1$, while $\|\nabla \bu_n\|_{L^2(B^+)} \to 0$: there exists 
$\mathbf{d} \in \R^k$ such that
\[
    \bu_n \to \mathbf{d} \qquad \text{in $H^1(B^+)$}.
\]
Using the segregation condition $v_{i,n} \cdot v_{j,n} |_{y=0} = 0$, which passes to the strong 
limit, we infer that only one among the constant $d_i$ may be non trivial, say $d_1 > 0$. But 
recalling that the even extension of $\hat v_{i,n}$ across $\{y=0\}$ is superharmonic, we find
\[
    \hat v_{1,n}(0) = 0 \implies \int_{B^+} \sum_{j \neq 1} \frac{a_{ij}}{a_{ji}} v_{j,n} \geq \int_{B^+} v_{1,n}
\]
a contradiction, passing to the strong limit in $H^1(B^+)$.

\emph{Step 2: the sequence $\bv_n$ admits a nontrivial weak limit $\bar \bv \in H^1_{\loc}
(\overline{\R^{N+1}_+})$.} From Step 1 and the uniform estimate \eqref{eqn: morrey bounded} 
we infer the weak convergence; let us show that $\bar \bv$ is non trivial. 
To this end, we recall that
\[
    \begin{cases}
        - \Delta v_{i,n} =0 &\text{in $B^+$}\\
        v_{i,n} \partial_\nu v_{i,n} \leq 0 &\text{on $\partial^0 B^+$}.
    \end{cases}
\]
Testing the equation against $v_{i,n}$ and summing over $i$, we have
\[
    \int_{B^+} |\nabla \bv_{n}|^2 \leq \int_{\partial^+ B^+} \sum_{i=1}^{k} v_{i,n} \partial_\nu v_{i,n} \leq  \left(\int_{\partial^+ B^+} |\bv_{n}|^2 \cdot \int_{\partial^+ B^+} |\nabla \bv_{n}|^2\right)^{1/2}.
\]
Were $\bar\bv$ trivial, the right hand side would go to zero thanks to the compact embedding of the 
trace operator and the uniform estimate \eqref{eqn: trace bounded}, which is scaling invariant. 
This would imply strong convergence, in contradiction with the fact that the $L^2$ norm of 
$\nabla\bv_n$ is equal to 1.

\emph{Step 3: $\bar \bv(x,0) \equiv 0$ on $\R^N$.}
Let us consider the sequence $\hat \bv_n$ (recall Definition \ref{def: hat}). From 
\eqref{eqn: segr prop} (in the case $s=1/2$), we know that the pair $(\hat v_{i}^+,\hat v_{i}^-)$ is 
made of two continuous, subharmonic, nonnegative  functions such that $\hat v_{i}^+ \cdot\hat 
v_{i}^- = 0$ in 
$\R^{N+1}$, with nonpositive normal derivative on $\R^N$. As a result, they satisfy the 
assumption of the 
Alt-Caffarelli-Friedmann monotonicity formula (Lemma \ref{thm: ACF_s} with $a=0$ and $\mu=1$), from 
which we obtain
\begin{multline*}
    \frac{1}{r^{N+1}} \int_{B_{r}^+(x_n,0)} |\nabla \hat v^+_i|^2 \de x \de y \cdot \frac{1}{r^{N+1}} \int_{B_{r_n}^+(x_n,0)} |\nabla \hat v_i^-|^2 \de x \de y \leq \\ \frac{1}{r^{2}} \int_{B_{r}^+(x_n,0)} \frac{|\nabla \hat v_i^+|^2}{|X-(x_n,0)|^{N-1}} \de x \de y \cdot \frac{1}{r^{2}} \int_{B_{r}^+(x_n,0)} \frac{|\nabla \hat v_i^-|^2}{|X-(x_n,0)|^{N-1}} \de x \de y\leq C,
\end{multline*}
that is
\[
    \frac{1}{r^{N+1-2\eps}} \int_{B_{r}^+(x_n,0)} |\nabla \hat v^+_i|^2 \de x \de y \cdot \frac{1}{r^{N+1-2\eps}} \int_{B_{r}^+(x_n,0)} |\nabla \hat v_i^-|^2 \leq Cr^{4\eps}.
\]
Hence at most one of the two Morrey quotients can be unbounded. Moreover, by the triangular inequality, the possibly unbounded one diverges at most at the same rate of $\Phi((x_n,0),r_n)$. Scaling to $(\hat v_{i,n}^+,\hat v_{i,n}^-)$ we can distinguish among three different cases:
\begin{itemize}
    \item both $\|\nabla \hat v_{i,n}^+\|_{L^2(B_r)}$ and $\|\nabla \hat v_{i,n}^-\|_{L^2(B_r)}$ are infinitesimal. In this situation, we have that there exists $c \geq 0$ such that $\hat v_{i,n} \to c$. Since by even extension
        \[
            \begin{cases}
                - \Delta \hat v_{i,n} \geq 0 &\text{in $B_r$}\\
                \hat v_{i,n}(0,0) = 0
            \end{cases} \implies \int_{B_r} \hat v_{i,n} \leq 0
        \]
        we have that $\hat v_{i,n} \to c \leq 0$;
    \item there exists $c>0$ such that $\|\nabla \hat v_{i,n}^+\|_{L^2(B_r)} \geq c > 0$ while $\|\nabla \hat v_{i,n}^-\|_{L^2(B_r)} \to 0$. Testing the equation
    \[
        \begin{cases}
            - \Delta \hat v_{i,n}^+ \leq 0 &\text{in $B_r^+$}\\
            \hat v_{i,n}^+ \partial_\nu \hat v_{i,n}^+ \leq 0 &\text{on $\partial^0 B_r^+$}
        \end{cases}
    \]
with $\hat v_{i,n}^+$ we obtain that in the limit $\hat v_{i,n}^+ \rightharpoonup\hat v_i \neq 0$, 
and thus $\hat v_{i,n}^- \to 0$ strongly in $H^1(B_r)$. Using again the superharmonicity of $\hat 
v_{i,n}$ as before, we conclude that $\hat v_{i,n} \to 0$, in contradiction with $\hat v_i \neq 0$;
    \item there exists $c>0$ such that $\|\nabla \hat v_{i,n}^-\|_{L^2(B_r)} \geq c > 0$ while $\|
    \nabla \hat v_{i,n}^+\|_{L^2(B_r)} \to 0$. Reasoning as in the previous case, we obtain that 
    $\hat v_{i,n}^+ \to 0$ strongly in $H^1(B_r)$, thus $\hat v_{i,n} \rightharpoonup  \hat v_{i} 
    \leq 0$.
\end{itemize}
In any case, $\hat v_{i,n}^+ \to 0$ and $\hat v_{i,n} \rightharpoonup \hat v_{i} \leq 0$ in 
$H^1_{\loc}(\R^{N+1}_+)$ for all $i$, implying in particular that $ v_{i,n}|_{y=0} \to 
\bar v_i|_{y=0} = 0$ in $H^{1/2}_{\loc}(\R^{N})$.

\emph{Conclusion.} If we extend $\bar \bv$ evenly across $\{y=0\}$, we obtain a $k$-tuple 
of harmonic functions defined on $\R^{N+1}$ for which $\Phi(0,r) \leq C$ for all $r \geq 1$. 
From the Morrey inequality, we have that for any $X \in \R^{N+1}$, $|X|\geq1$,
\[
    |\bar \bv(X) - \bar \bv(0)| \leq C |X|^{1-\frac{N+1}{2}} \|\nabla \bar \bv\|_{L^2(B_{2|X|})}.
\]
As a result, we have
\[
    |\bar \bv(X) - \bar \bv(0)| \leq C |X|^{1-\eps}
\]
for every $|X|\geq1$, in contradiction with the fact that $\bv$ is harmonic in $\R^{N+1}$ and non 
trivial, thanks to the classical Liouville theorem.
\end{proof}
\begin{remark}
More general nonlinearities should be addressable, using similar arguments as before, 
once a generalization of the Caffarelli-Jerison-Kenig almost monotonicity formula 
\cite{cafjerken} to this setting were available.
\end{remark}
\begin{remark}
The case $s \neq 1/2$ could follow as a generalization of the previous proof, if not for the fact 
that, at the moment, no exact Alt-Caffarelli-Friedman monotonicity formula is available, in this 
setting: one could only show, by Lemma \ref{thm: ACF_s} and \ref{lem: elementary estimate}, 
the $\C^{0,\alpha}$ continuity of the limiting profiles, for every $\alpha < 2s$ and 
$\alpha \leq \mu$.
\end{remark}

\bibliography{alex_bib_LV}
\bibliographystyle{abbrv}

\end{document}